\newtheorem{theorem}{Theorem}[section]
\newtheorem{corollary}[theorem] {Corollary}
\newtheorem{definition}[theorem]{Definition}
\newtheorem{example}[theorem]{Example}
\newtheorem{lemma} [theorem]{Lemma}
\newtheorem{proposition}[theorem]{Proposition}
\title{This is the title}
\begin{document}
	\begin{center}
{\bf{FRAMES  FOR   METRIC  SPACES}}\\
K. MAHESH KRISHNA AND P. SAM JOHNSON  \\
Department of Mathematical and Computational Sciences\\ 
National Institute of Technology Karnataka (NITK), Surathkal\\
Mangaluru 575 025, India  \\
Emails: kmaheshak@gmail.com,  sam@nitk.edu.in\\

Date: \today\\

\end{center}

\hrule
\vspace{0.5cm}
%--------------------------------------
\textbf{Abstract}: We  make a systematic study of frames for metric spaces. We prove that every  separable metric space admits a metric $\mathcal{M}_d$-frame. Through Lipschitz-free Banach spaces we show that there is a correspondence
between  frames for metric spaces  and  frames for subsets of  Banach spaces. We derive some characterizations of metric frames. 
 We also derive stability results for metric frames.

\textbf{Keywords}:  Frame, metric space, Lipschitz function.

\textbf{Mathematics Subject Classification (2020)}:  42C15, 54E35,  26A16.

%\tableofcontents

\section{Introduction}

  Materialized  from the prime paper of Duffin and Schaeffer  in 1952 \cite{DUFFINSCHAEFFER} and stimulated from the
  significant paper of   Daubechies, Grossmann,  and Meyer  in 1986  \cite{DAUBECHIESGROSSMANMEYER} the theory of
  frames for Hilbert spaces  began as a discipline in Mathematics. Power of frames is that it allows to construct every element of the Hilbert space (in more than one way) and it   captures the geometry of Hilbert space. Today, this theory finds its appearance and uses in diverse theoretical and applied
  areas.     We start by making a review of the notion 
   ``frame" in Hilbert and Banach spaces.
  \begin{definition}\cite{DUFFINSCHAEFFER}\label{FREMEDEF}
  	A collection $\{\tau_n\}_n$ in a Hilbert space $\mathcal{H}$ is said to be a
  	frame for $\mathcal{H}$ if there exist $a,b>0$ such that 
  	\begin{align*}
  		a\|h\|^2 \leq \sum_{n=1}^\infty |\langle h, \tau_n\rangle|^2\leq b\|h\|^2, \quad \forall h \in \mathcal{H}.
  	\end{align*}
  	Constants $a$ and $b$ are called as lower and upper frame bounds, respectively. If $a$ can take the   value 0, then we say 
  	$\{\tau_n\}_n$ is a Bessel sequence for  $\mathcal{H}$.
  \end{definition}
  Simple Definition \ref{FREMEDEF} gives the following fundamental result. 
  \begin{theorem}\cite{CASAZZA1997, CHRISTENSENBOOK}\label{FUNDAMENTALTHEOREM}
  	Let $\{\tau_n\}_n$ be a frame for $\mathcal{H}$ with bounds  $a$ and $b$. Then
  	\begin{enumerate}[\upshape(i)]
  	%	\item $\overline{\operatorname{span}}\{\tau_n\}_n=\mathcal{H}$.
  	%	\item $\|\tau_n\|\leq \sqrt{b}, \forall n \in \mathbb{N}$.
  		\item The frame operator  $S_\tau :\mathcal{H} \ni h \mapsto \sum_{n=1}^\infty \langle h, \tau_n\rangle\tau_n\in	\mathcal{H}$
  		is  well-defined  bounded linear, positive and invertible. Further, $\sqrt{a}\|h\|\leq \|\sum_{n=1}^\infty \langle h, \tau_n\rangle\tau_n\|\leq \sqrt{b} \|h\|, \forall h \in \mathcal{H}$ and 
  		\begin{align*}
  			h=\sum_{n=1}^\infty \langle h, S_\tau^{-1}\tau_n\rangle \tau_n=\sum_{n=1}^\infty
  			\langle h, \tau_n\rangle S_\tau^{-1}\tau_n, \quad \forall h \in
  			\mathcal{H}.
  		\end{align*}
  	\item $\{S_\tau^{-1}\tau_n\}_n$ is a frame for $\mathcal{H}$.
  		\item The analysis operator $
  			\theta_\tau: \mathcal{H} \ni h \mapsto \{\langle h, \tau_n\rangle \}_n \in \ell^2(\mathbb{N})$
  		is  well-defined  bounded linear, injective operator. Moreover, $\sqrt{a}\|h\|\leq \|\theta_\tau h\|\leq \sqrt{b} \|h\|, \forall h \in \mathcal{H}$ and the  adjoint of $\theta_\tau$ (synthesis operator) is given by  $\theta_\tau^*: \ell^2(\mathbb{N}) \ni \{a_n \}_n \mapsto \sum_{n=1}^\infty a_n\tau_n \in \mathcal{H}$
  			which is surjective.
  		\item Frame operator 
  		factors as $S_\tau=\theta_\tau^*\theta_\tau.$
  		\item  Let  $\{\omega_n\}_n$ be a collection in  $\mathcal{H}$ satisfying the following:
  		There exist $\alpha, \beta, \gamma \geq 0$ with $\operatorname{max} \{\alpha +\frac{\gamma}{\sqrt{a}},\beta\}<1$ and for $ m=1,2, \dots$, 
  		\begin{align*}
  			\left\|\sum_{n=1}^mc_n(\tau_n-\omega_n)\right\|\leq \alpha \left\|\sum_{n=1}^mc_n\tau_n\right\|+
  			\beta\left\|\sum_{n=1}^mc_n\omega_n\right\|+\gamma \left( \sum_{n=1}^m|c_n|^2\right)^\frac{1}{2}, \quad \forall c_1, \dots, c_m\in 
  			\mathbb{K}.
  		\end{align*}
  		Then $\{\omega_n\}_n$ is a frame for   $\mathcal{H}$ with bounds 
  	$
  			a \left(1-\frac{\alpha+\beta+\frac{\gamma}{\sqrt{a}}}{1+\beta}\right)^2\text{~and~}
  			b\left(1+\frac{\alpha+\beta+\frac{\gamma}{\sqrt{a}}}{1+\beta}\right)^2.
  		$
  		
  		%\item If a vector can be written as $h=\sum_{n=1}^\infty a_n\tau_n$ for some sequence $\{a_n\}_n$  of scalars, then 
  		%\begin{align*}
  		%	\sum_{n=1}^\infty|a_n|^2=\sum_{n=1}^\infty|\langle h, S_\tau^{-1}\tau_n\rangle|^2+\sum_{n=1}^\infty|h-\langle h, S_\tau^{-1}\tau_n\rangle|^2.
  		%\end{align*}
  		
  		\item $ \theta_\tau S_\tau^{-1} \theta_\tau^*$ is a projection onto $ \theta_\tau(\mathcal{H})$.
  	%	\item $\|S_\tau^{-1}\|^{-1}$ is a lower frame bound and $\|S_\tau\|$ is an upper frame bound.
  	\end{enumerate}
  \end{theorem}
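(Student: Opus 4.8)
The plan is to treat the six assertions in their natural logical order, deriving the analysis and synthesis operators first and building everything on top of the factorization $S_\tau=\theta_\tau^*\theta_\tau$. I would begin with (iii). That $\theta_\tau$ is well defined with $\|\theta_\tau h\|\le\sqrt b\,\|h\|$ is exactly the upper frame inequality rewritten via $\|\theta_\tau h\|^2=\sum_n|\langle h,\tau_n\rangle|^2$, while the lower inequality gives $\sqrt a\,\|h\|\le\|\theta_\tau h\|$, so $\theta_\tau$ is injective and bounded below and hence has closed range; linearity is immediate. A one-line computation $\langle\theta_\tau h,\{a_n\}\rangle_{\ell^2}=\langle h,\sum_n a_n\tau_n\rangle$ identifies the adjoint as $\theta_\tau^*\{a_n\}=\sum_n a_n\tau_n$, and since $\theta_\tau$ has closed range, $\overline{R(\theta_\tau^*)}=(\ker\theta_\tau)^\perp=\mathcal{H}$ forces $\theta_\tau^*$ to be surjective. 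Part (iv) is then the direct identity $\theta_\tau^*\theta_\tau h=\theta_\tau^*\{\langle h,\tau_n\rangle\}_n=\sum_n\langle h,\tau_n\rangle\tau_n=S_\tau h$.

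With (iv) in hand, (i) and (ii) are spectral theory. From $\langle S_\tau h,h\rangle=\|\theta_\tau h\|^2$ the frame inequality becomes the operator inequality $aI\le S_\tau\le bI$, so $S_\tau$ is bounded, positive and self-adjoint, bounded below by $a>0$ and therefore invertible; the displayed norm estimates for $S_\tau$ follow from this operator inequality through the functional calculus, and the two reconstruction formulas come from inserting $S_\tau S_\tau^{-1}=S_\tau^{-1}S_\tau=I$ and using self-adjointness of $S_\tau^{-1}$. For (ii) I would compute $\sum_n|\langle h,S_\tau^{-1}\tau_n\rangle|^2=\sum_n|\langle S_\tau^{-1}h,\tau_n\rangle|^2=\langle S_\tau S_\tau^{-1}h,S_\tau^{-1}h\rangle=\langle S_\tau^{-1}h,h\rangle$ and read off the bounds $1/b$ and $1/a$ from $b^{-1}I\le S_\tau^{-1}\le a^{-1}I$. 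Part (vi) is then formal: $\theta_\tau S_\tau^{-1}\theta_\tau^*$ is self-adjoint because $S_\tau^{-1}$ is, and idempotent because $(\theta_\tau S_\tau^{-1}\theta_\tau^*)^2=\theta_\tau S_\tau^{-1}(\theta_\tau^*\theta_\tau)S_\tau^{-1}\theta_\tau^*=\theta_\tau S_\tau^{-1}\theta_\tau^*$ by (iv); since it fixes every $\theta_\tau h$ and lands in $\theta_\tau(\mathcal{H})$, its range is exactly $\theta_\tau(\mathcal{H})$.

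The real work is (v), the Paley--Wiener-type perturbation statement, and I would follow the synthesis-operator scheme. Writing $T:=\theta_\tau^*$ and letting $U$ denote the a priori only formal synthesis map $\{c_n\}\mapsto\sum_n c_n\omega_n$, the hypothesis reads $\|(T-U)c\|\le\alpha\|Tc\|+\beta\|Uc\|+\gamma\|c\|$ on finitely supported $c$. Rearranging $\|Uc\|\le\|Tc\|+\|(T-U)c\|$ gives $(1-\beta)\|Uc\|\le(1+\alpha)\|Tc\|+\gamma\|c\|\le((1+\alpha)\sqrt b+\gamma)\|c\|$, which shows the partial sums are Cauchy; hence $U$ extends to a bounded operator on $\ell^2(\mathbb{N})$, $\{\omega_n\}$ is Bessel, and tracking the constants produces the upper frame bound. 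For the lower bound I would introduce $V:=U\theta_\tau S_\tau^{-1}\colon\mathcal{H}\to\mathcal{H}$, note $T\theta_\tau S_\tau^{-1}=I$, and estimate, for $h\in\mathcal{H}$ and $c:=\theta_\tau S_\tau^{-1}h$ (so $Tc=h$ and $\|c\|\le\frac{1}{\sqrt a}\|h\|$), that $\|h-Vh\|=\|(T-U)c\|\le\alpha\|h\|+\beta\|Vh\|+\frac{\gamma}{\sqrt a}\|h\|$; combining this with $\|h\|\le\|Vh\|+\|h-Vh\|$ yields $\|Vh\|\ge\big(1-\frac{\alpha+\beta+\gamma/\sqrt a}{1+\beta}\big)\|h\|$, which is positive by the hypothesis $\alpha+\gamma/\sqrt a<1$ and is exactly the square root of the claimed lower bound.

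The main obstacle is precisely here. Being bounded below makes $V$ injective with closed range, but the lower frame bound for $\{\omega_n\}$ is equivalent to surjectivity of its synthesis operator $U$, so I must still upgrade $V$ to an operator that is onto. I expect this surjectivity --- equivalently, that no nonzero $y$ is orthogonal to $R(V)$ --- to be the delicate step: taking $y\perp R(V)$ forces $TU^*y=0$, and I would feed $c=U^*y$ back into the perturbation inequality (where the $\alpha\|Tc\|$ term now vanishes) to squeeze $y$ to $0$. Once $V$ is invertible, $\theta_\tau S_\tau^{-1}V^{-1}$ is a right inverse of $U$ with norm at most $\frac{1}{\sqrt a}\,\|V^{-1}\|$, and the standard estimate $\|h\|^2=\langle U(\theta_\tau S_\tau^{-1}V^{-1}h),h\rangle\le\|\theta_\tau S_\tau^{-1}V^{-1}h\|\,\|\theta_\omega h\|$ then delivers the lower frame bound $a\big(1-\frac{\alpha+\beta+\gamma/\sqrt a}{1+\beta}\big)^2$, matching the statement.
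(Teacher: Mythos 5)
Your parts (i)--(iv) and (vi) are correct and standard; since the paper states this theorem with citations to Casazza (1997) and Christensen's book and gives no proof of its own, the only yardstick is the classical argument, which your treatment of those parts matches. The genuine gap is exactly where you predicted it, in (v), and your proposed fix does not close it. Take $y\perp R(V)$ with $V=U\theta_\tau S_\tau^{-1}$; then indeed $V^\ast=S_\tau^{-1}TU^\ast$, so $TU^\ast y=0$, and feeding $c=U^\ast y$ into the perturbation inequality kills the $\alpha$-term and gives $(1-\beta)\|UU^\ast y\|\le\gamma\|U^\ast y\|$. Pairing with $y$ via $\|U^\ast y\|^2=\langle UU^\ast y,y\rangle$ then yields only $\|U^\ast y\|\le\frac{\gamma}{1-\beta}\|y\|$, which is no contradiction: the hypothesis $\max\{\alpha+\gamma/\sqrt a,\beta\}<1$ does not make $\gamma/(1-\beta)$ small (take $\beta$ near $1$), and even for $\gamma=0$ you only learn $y\perp\overline{R(U)}$, not $y=0$. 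Cheap repairs also fail: your estimates give $\|I_{\mathcal H}-V\|\le\frac{\alpha+\beta+\gamma/\sqrt a}{1-\beta}$, which is $<1$ only under the stronger hypothesis $\alpha+2\beta+\gamma/\sqrt a<1$, so a Neumann-series argument is unavailable; and the Hilbert-space orthogonality trick (from $y\perp Vy$, $\|y\|^2+\|Vy\|^2\le(\lambda_1\|y\|+\lambda_2\|Vy\|)^2$) needs $\lambda_1^2+\lambda_2^2<1$, again stronger than $\max\{\lambda_1,\lambda_2\}<1$.

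What the cited sources actually use here is the Casazza--Christensen operator perturbation lemma: if $\|x-Vx\|\le\lambda_1\|x\|+\lambda_2\|Vx\|$ for all $x$ with $\lambda_1,\lambda_2\in[0,1)$, then $V$ is invertible. Its proof is a homotopy/clopen argument, not an orthogonality argument: set $V_t\coloneqq(1-t)I+tV$ and, writing $Vx=t^{-1}\bigl(V_tx-(1-t)x\bigr)$ for $t>0$, deduce the uniform lower bound $\|V_tx\|\ge\frac{1-\max\{\lambda_1,\lambda_2\}}{1+\lambda_2}\|x\|$ for all $t\in[0,1]$; then the set of $t$ for which $V_t$ is surjective is open and closed in $[0,1]$ and contains $t=0$, so $V=V_1$ is invertible. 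Your inequality $\|h-Vh\|\le(\alpha+\gamma/\sqrt a)\|h\|+\beta\|Vh\|$ is precisely the lemma's hypothesis with $\lambda_1=\alpha+\gamma/\sqrt a$, $\lambda_2=\beta$, and once invertibility is in hand the remainder of your argument (the right inverse $\theta_\tau S_\tau^{-1}V^{-1}$ and the lower bound $a\bigl(1-\frac{\alpha+\beta+\gamma/\sqrt a}{1+\beta}\bigr)^2$) goes through. Two smaller discrepancies you glossed over: from $aI\le S_\tau\le bI$ one gets $a\|h\|\le\|S_\tau h\|\le b\|h\|$, not the printed $\sqrt a\,\|h\|\le\|S_\tau h\|\le\sqrt b\,\|h\|$ (a typo in the statement --- those constants belong to $\theta_\tau$); and your Bessel estimate $(1-\beta)\|Uc\|\le((1+\alpha)\sqrt b+\gamma)\|c\|$ squares to the standard upper bound $b\bigl(1+\frac{\alpha+\beta+\gamma/\sqrt b}{1-\beta}\bigr)^2$, which is not the printed $b\bigl(1+\frac{\alpha+\beta+\gamma/\sqrt a}{1+\beta}\bigr)^2$, so ``tracking the constants'' cannot literally produce the displayed bound either.
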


  Theorem \ref{FUNDAMENTALTHEOREM} says several things. First, it says that every vector in the Hilbert
  space admits an expansion, called as generalized Fourier expansion, similar to
  Fourier
  expansion coming from an orthonormal basis for a Hilbert space. Second, it
  says that coefficients in the expansion of a vector need not be unique. This is
  particularly important in applications, since loss in the information of a
  vector is less if some of the coefficients are missing. Third, given a frame,
  it naturally generates other frames. Fourth, a frame gives bounded linear
  injective operator from the less known inner product on the Hilbert space to
  the well known standard inner product on the standard separable Hilbert space $\ell^2(\mathbb{N})$.
  Frame inequality now clearly says that there is a comparision of norms between $\mathcal{H}$ and
  the standard Hilbert space $\ell^2(\mathbb{N})$. Fifth, a frame  embeds $\mathcal{H}$ in $\ell^2(\mathbb{N})$
   through the bounded linear  operator $\theta_\tau$. Sixth,  whenever a Hilbert 
  space admits a frame it becomes an image of a surjective operator $\theta_\tau^*$ from
  the $\ell^2(\mathbb{N})$ to it. Seventh, frames are stable w.r.t. perturbations i.e., a small
  change in frame also yields a frame. For further information on frame theory in Hilbert spaces
  we refer the reader to \cite{HANLARSONMEMO,   HEILBOOK, ALDROUBIPORTRAITS,  CASAZZABOOK, KORNELSON, BALAN} .\\

  A look at Definition \ref{FREMEDEF} gives the  natural question: How do we set Definition \ref{FREMEDEF}
  for Banach spaces? It is important to keep in mind that it is more important
  to get results  (at least Fourier like) similar to Theorem \ref{FUNDAMENTALTHEOREM} than defining  a
  generalization of Definition \ref{FREMEDEF} in Banach spaces. This was first done by
  Grochenig in 1991 \cite{GROCHENIG}. After the study of several function spaces \cite{FEICHTINGERCHOOSE},  Grochenig first studied the notion of an atomic decomposition for
  Banach spaces and then defined the notion of a Banach frame. For stating these definitions we want to know the definition 
  of BK-space (Banach scalar valued sequence space or Banach coordinate space).
  \begin{definition} \cite{BANASMURSALEEN}
  	A sequence space $\mathcal{X}_d$ is said to be  a BK-space if all the coordinate functionals are continuous, i.e.,
  	whenever $\{x_n\}_n$ is a sequence in $\mathcal{X}_d$ converging to $x \in \mathcal{X}_d$, then each coordinate of 
  	$x_n$ converges to each coordinate of $x$.
  \end{definition}
   Familiar sequence spaces like  $\ell^p(\mathbb{N})$, $c(\mathbb{N})$ (space of convergent sequences) and $c_0(\mathbb{N})$ (space of  sequences converging to zero) are examples of  BK-spaces. 
 \begin{definition}\cite{GROCHENIG}\label{ATOMICDECOMPODEFI}
 	Let $\mathcal{X}$ be a Banach space and $\mathcal{X}_d$  be  an associated  BK-space. Let $\{f_n\}_n$ be a collection in $\mathcal{X}^*$ and 
 	$\{\tau_n\}_n$ be a collection in $\mathcal{X}$. The pair $(\{f_n\}_n, \{\tau_n\}_n)$ is said to be an atomic decomposition for $\mathcal{X}$ w.r.t $\mathcal{X}_d$ 
 	if the following holds.
 	\begin{enumerate}[\upshape(i)]
 		\item $\{f_n(x)\}_n \in \mathcal{X}_d$, for  each $x \in \mathcal{X}$. 
 		\item There exist $a,b>0$ such that 
 		$
 			a\|x\|\leq \|\{f_n(x)\}_n\|\leq b\|x\|,  \forall x \in \mathcal{X}.
 	$
 		\item $x=\sum_{n=1}^\infty f_n(x)\tau_n$, for  each $x \in \mathcal{X}$.
 	\end{enumerate}
 	Constants $a$ and $b$ are called as atomic  bounds for the atomic decomposition.
 \end{definition}
 From Theorem \ref{FUNDAMENTALTHEOREM} one sees that whenever  $\{\tau_n\}_n$ is a frame for $\mathcal{H}$, then by defining $f_n (h)\coloneqq \langle h, S_\tau^{-1} \tau_n\rangle $, $ \forall h \in \mathcal{H}$, $ \forall n$, the pair $(\{f_n\}_n, \{\tau_n\}_n)$ satisfies all the conditions of  Definition \ref{ATOMICDECOMPODEFI} and hence it is an atomic decomposition for $\mathcal{H}$ w.r.t $\ell^2(\mathbb{N})$.
  
  \begin{definition}\cite{GROCHENIG}\label{BANACHFRAMEDEF}
  	Let $\mathcal{X}$ be a Banach space and $\mathcal{X}_d$  be  an associated  BK-space. Let $\{f_n\}_n$ be a collection in $\mathcal{X}^*$ and $S:\mathcal{X}_d \to \mathcal{X}$ 
  	be a bounded linear operator.
  	The pair $(\{f_n\}_n, S)$ is said to be a Banach frame for $\mathcal{X}$ w.r.t $\mathcal{X}_d$ 
  	if the following holds.
  	\begin{enumerate}[\upshape(i)]
  		\item $\{f_n(x)\}_n \in \mathcal{X}_d$, for  each $x \in \mathcal{X}$. 
  		\item There exist $a,b>0$ such that 
  		$
  			a\|x\|\leq \|\{f_n(x)\}_n\|\leq b\|x\|,  \forall x \in \mathcal{X}.
  	$
  		\item $S(\{f_n(x)\}_n)=x$, for  each $x \in \mathcal{X}$.
  	\end{enumerate}
  	Constants $a$ and $b$ are called as Banach frame bounds. The operator $S$ is called as reconstruction operator.
  \end{definition}
  We again look at Theorem \ref{FUNDAMENTALTHEOREM}. We now define $f_n (h)\coloneqq \langle h,  \tau_n\rangle $, $ \forall h \in \mathcal{H}$, $ \forall n$ and $S\coloneqq S_\tau^{-1}\theta_\tau^*$. Then $(\{f_n\}_n, S)$ is  a Banach frame for $\mathcal{H}$ w.r.t $\ell^2(\mathbb{N})$.\\
  After the formation of Definitions  \ref{ATOMICDECOMPODEFI} and \ref{BANACHFRAMEDEF}, Feichtinger and Grochenig in 90's 
  \cite{FEICHTINGGERGROCHENIG1, FEICHTINGGERGROCHENIG2, FEICHTINGGERGROCHENIG3},   developed
  a  theory of atomic decompositions and frames for a large class of function spaces (modulation spaces \cite{FEICHTINGERLOOKING}  and coorbit  spaces), via,
  group representations and projective representations.  Christensen and Heil \cite{CHRISTENSENHEIL} studied the stability of atomic
  decompositions and Banach frames. For more on atomic decompositions we refer \cite{CHRISTENSENATOMICVIA, CARANDOLASSALLE} and for more on Banach frames we refer 
  \cite{GROCHENIGLOCALIZATION, ALDROUBISLANTED, GITTASHEARLET,  CARANDOLASSALLESCHMIDBERG, TEREKHIN2010, TEREKHIN2009, TEREKHIN2004, FORNASIERALPHA, FORNASIERGROIN}.\\
  Abstract study of atomic decompositions and Banach frames started from the
  fundamental paper \cite{CASAZZAHANLARSONFRAMEBANACH} of Casazza, Han, and Larson in 1999. As a sample, consider
  the abstract question: Does every separable Banach space admit a Banach frame?
  For Hilbert spaces it follows immediately, since separable spaces have
  orthonormal bases and an orthonormal basis is a frame. Existence of Banach frame
  can not be argued by considering the  existence of Schauder basis, since there
  are Banach spaces without Schauder bases. Using Hahn-Banach theorem, the following result was proved in \cite{CASAZZAHANLARSONFRAMEBANACH}.
  \begin{theorem}\cite{CASAZZAHANLARSONFRAMEBANACH}\label{BANACHFRAMEEXISTSSEPARABLE}
  	Every separable Banach space admits a Banach frame w.r.t. a closed subspace of $\ell^\infty(\mathbb{N}).$
  \end{theorem}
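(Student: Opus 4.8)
The plan is to realize $\mathcal{X}$ isometrically inside $\ell^\infty(\mathbb{N})$ by means of a countable family of norming functionals, and then to read off both the analysis functionals $\{f_n\}_n$ and the reconstruction operator $S$ directly from this embedding. Since $\mathcal{X}$ is separable, its unit sphere $S_\mathcal{X}=\{x\in\mathcal{X}:\|x\|=1\}$ is a separable metric space; fix a sequence $\{x_n\}_n$ that is dense in $S_\mathcal{X}$. For each $n$, the Hahn--Banach theorem supplies a functional $f_n\in\mathcal{X}^*$ with $\|f_n\|=1$ and $f_n(x_n)=1$.

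First I would verify that $\sup_n|f_n(x)|=\|x\|$ for every $x\in\mathcal{X}$. The inequality $\sup_n|f_n(x)|\le\|x\|$ is immediate from $\|f_n\|=1$. For the reverse inequality it suffices, by homogeneity, to treat the case $\|x\|=1$: given $\varepsilon>0$, choose $x_n$ with $\|x-x_n\|<\varepsilon$, and then $f_n(x)=f_n(x_n)+f_n(x-x_n)\ge 1-\varepsilon$, so that letting $\varepsilon\to 0$ yields $\sup_n|f_n(x)|\ge 1$. This density-plus-Hahn--Banach step is the heart of the argument and the only place where separability is used in an essential way; everything afterwards is bookkeeping.

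Consequently the map $\theta:\mathcal{X}\ni x\mapsto\{f_n(x)\}_n$ is a linear isometry of $\mathcal{X}$ into $\ell^\infty(\mathbb{N})$. I would then set $\mathcal{X}_d:=\theta(\mathcal{X})$. Because $\theta$ is isometric and $\mathcal{X}$ is complete, $\theta(\mathcal{X})$ is complete, hence a closed subspace of $\ell^\infty(\mathbb{N})$; moreover it is automatically a BK-space, since the coordinate functionals on $\ell^\infty(\mathbb{N})$ are bounded by the sup-norm and therefore restrict to continuous functionals on the subspace $\mathcal{X}_d$.

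Finally, define the reconstruction operator $S:\mathcal{X}_d\to\mathcal{X}$ to be $\theta^{-1}$, which is a well-defined bounded (indeed isometric) linear operator because $\theta$ is an isometric bijection of $\mathcal{X}$ onto $\mathcal{X}_d$. It then remains only to check the three clauses of Definition \ref{BANACHFRAMEDEF}: clause (i) holds since $\{f_n(x)\}_n=\theta(x)\in\mathcal{X}_d$; clause (ii) holds with $a=b=1$ from the isometry $\|\{f_n(x)\}_n\|_\infty=\|x\|$; and clause (iii) holds since $S(\{f_n(x)\}_n)=\theta^{-1}(\theta(x))=x$. I do not anticipate any genuine obstacle beyond the norming-functional step, as the remaining verifications are routine once the isometric embedding is in place.
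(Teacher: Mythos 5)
Your proposal is correct and is essentially the argument the paper points to: the statement is quoted from \cite{CASAZZAHANLARSONFRAMEBANACH} with the remark that it is proved via Hahn--Banach, and your construction (norming functionals $f_n$ at a dense sequence in the unit sphere, the resulting isometry $\theta$ into $\ell^\infty(\mathbb{N})$, $\mathcal{X}_d=\theta(\mathcal{X})$, $S=\theta^{-1}$) is exactly that classical proof. It is also the same scheme the paper itself adapts in the metric setting in Theorem \ref{METRICFRAMEEXISTS}, with Mc-Shane extension playing the role your Hahn--Banach step plays here.
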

  We now note that a direct version of Definition \ref{FREMEDEF} had been set for Banach spaces
  by Aldroubi in 2001 \cite{ALDROUBISUNTANG}.
  \begin{definition}\label{ALDROUBITANG}\cite{ALDROUBISUNTANG}
  	Let $1<p<\infty$ and $\mathcal{X}$ be a Banach space. A sequence 
  	$\{f_n\}_{n}$ of bounded linear functionals in $\mathcal{X}^*$ is said to be a
  	p-frame for $\mathcal{X}$ if there exist $a,b>0$ such that 	
  	\begin{align*}
  		a\|x\|\leq \left(\sum_{n=1}^{\infty}|f_n(x)|^p\right)^\frac{1}{p}\leq
  		b\|x\|,\quad \forall x \in \mathcal{X}.
  	\end{align*}
  \end{definition}

  Like frames for Hilbert spaces,  p-frames for Banach spaces can be characterized using operators. 
  \begin{theorem}\cite{CHRISTENSENSTOEVA}\label{pFRAMECHAR}
  	Let $\mathcal{X}$ be  a Banach space and $\{f_n\}_{n}$ be a sequence in
  	$\mathcal{X}^*$.
  	\begin{enumerate}[\upshape(i)]
  		\item $\{f_n\}_{n}$ is a p-Bessel sequence for 
  		$\mathcal{X}$ with bound $b$ if and only if 
  		\begin{align}\label{BASSELOPERATORCHARACTERIZATION}
  			T: \ell^q (\mathbb{N}) \ni \{a_n\}_{n} \mapsto  \sum_{n=1}^\infty a_nf_n \in \mathcal{X}^*
  		\end{align}
  		is a well-defined (hence bounded) linear operator and $\|T\|\leq b$ (where $q$ is the conjugate
  		index of $p$).
  		\item If $\mathcal{X}$ is reflexive, then $\{f_n\}_{n}$ is a p-frame 
  		for $\mathcal{X}$ if and only if the operator $T$ in
  		(\ref{BASSELOPERATORCHARACTERIZATION}) is surjective.
  	\end{enumerate}
  \end{theorem}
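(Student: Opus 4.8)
The plan is to route everything through the analysis operator $U\colon\mathcal{X}\ni x\mapsto\{f_n(x)\}_n$ and to recognise the synthesis operator $T$ of \eqref{BASSELOPERATORCHARACTERIZATION} as its adjoint. Under the canonical identification $(\ell^p(\mathbb{N}))^*=\ell^q(\mathbb{N})$, for every $\{a_n\}_n\in\ell^q(\mathbb{N})$ and every $x\in\mathcal{X}$ one computes $(U^*\{a_n\}_n)(x)=\sum_{n=1}^\infty a_nf_n(x)=\big(\sum_{n=1}^\infty a_nf_n\big)(x)$, so that $T=U^*$ whenever either side makes sense. With this dictionary the $p$-Bessel condition with bound $b$ is precisely the assertion $\|U\|\le b$, while the $p$-frame condition of Definition~\ref{ALDROUBITANG} is precisely the assertion that $U$ is bounded and bounded below.

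For (i), I would first assume $\{f_n\}_n$ is $p$-Bessel with bound $b$. For a finitely supported scalar sequence, H\"older's inequality gives $\big|\sum_n a_nf_n(x)\big|\le\big(\sum_n|a_n|^q\big)^{1/q}\big(\sum_n|f_n(x)|^p\big)^{1/p}\le b\,\|\{a_n\}_n\|_q\,\|x\|$, and hence $\big\|\sum_{n=M+1}^N a_nf_n\big\|\le b\big(\sum_{n=M+1}^N|a_n|^q\big)^{1/q}$. Because $1<q<\infty$, the tails of an $\ell^q$-sequence vanish, so the partial sums are Cauchy in the complete space $\mathcal{X}^*$; thus $T$ is well defined on the dense subspace of finitely supported sequences with $\|T\|\le b$ and extends to all of $\ell^q(\mathbb{N})$ by continuity. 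Conversely, assuming $T$ well defined and bounded with $\|T\|\le b$, I would fix $x$ and use the finite-dimensional duality $(\ell^q_N)^*=\ell^p_N$: for every $\{a_n\}$ supported on $\{1,\dots,N\}$ with $\|\{a_n\}\|_q\le1$ one has $\big|\sum_{n=1}^N a_nf_n(x)\big|=\big|(T\{a_n\})(x)\big|\le\|T\|\,\|x\|\le b\|x\|$, so taking the supremum yields $\big(\sum_{n=1}^N|f_n(x)|^p\big)^{1/p}\le b\|x\|$, and letting $N\to\infty$ recovers the $p$-Bessel bound.

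For (ii), part (i) already identifies the upper frame inequality with $T$ being a well-defined bounded operator, so the only remaining content is that the lower frame inequality is equivalent to surjectivity of $T$. The lower inequality says exactly that $U$ is bounded below, i.e. injective with closed range. I would then invoke the standard duality between surjectivity and boundedness below for operators between Banach spaces, namely that $T=U^*$ is surjective if and only if $U$ is bounded below. Reflexivity is used here: since $\mathcal{X}^{**}=\mathcal{X}$, the adjoint $T^*\colon\mathcal{X}\to\ell^p(\mathbb{N})$ agrees with $U$ through the canonical embedding, so the textbook equivalence ``$T$ surjective $\iff$ $T^*$ bounded below'' reads literally as ``$T$ surjective $\iff$ $U$ bounded below.'' Combined with (i), this proves the stated equivalence.

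The hard part will be the duality step in (ii). One implication ($U$ bounded below $\Rightarrow U^*$ surjective) is a direct Hahn--Banach extension argument: a functional on $\mathcal{X}$ transports to a bounded functional on the closed range $U(\mathcal{X})$ and then extends to $\ell^q(\mathbb{N})$. The reverse implication rests on the Closed Range Theorem: surjectivity of $U^*$ makes $R(U^*)=\mathcal{X}^*$ closed, hence $R(U)$ is closed and $R(U^*)=N(U)^{\perp}$, and $N(U)^{\perp}=\mathcal{X}^*$ forces $N(U)=\{0\}$, giving injectivity with closed range. The only other technical care is the convergence and density bookkeeping in (i), where it is essential that $q<\infty$ so that finitely supported sequences are dense in $\ell^q(\mathbb{N})$.
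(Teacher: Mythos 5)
Your proof is correct, but a comparison with ``the paper's own proof'' is not possible here: the paper states Theorem~\ref{pFRAMECHAR} as a quoted result from \cite{CHRISTENSENSTOEVA} and gives no proof, so your argument can only be measured against the standard one of Christensen--Stoeva, with which it essentially coincides. Part (i) is sound in both directions: H\"older plus the tail estimate $\bigl\|\sum_{n=M+1}^{N}a_nf_n\bigr\|\leq b\bigl(\sum_{n=M+1}^{N}|a_n|^q\bigr)^{1/q}$ shows the partial sums are Cauchy (you rightly flag that $q<\infty$ is what makes finitely supported sequences dense), and the converse via the finite-dimensional duality $(\ell^q_N)^*=\ell^p_N$ recovers the Bessel bound $b$ exactly. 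In part (ii) your bookkeeping is also right: surjectivity of $T$ tacitly includes well-definedness, which by (i) and Banach--Steinhaus supplies the upper bound, so only ``lower bound $\iff$ surjectivity'' remains, and your Hahn--Banach argument ($U$ bounded below $\Rightarrow$ $U^*$ surjective) together with the Closed Range Theorem argument ($R(U^*)=\mathcal{X}^*$ closed $\Rightarrow$ $R(U)$ closed and $N(U)^{\perp}=\mathcal{X}^*$, hence $U$ bounded below) is exactly the right duality. One observation worth recording: those two arguments are phrased directly for $U\colon\mathcal{X}\to\ell^p(\mathbb{N})$ and $U^*$ and never use reflexivity; reflexivity enters your write-up only in the alternative formulation through $T^*\colon\mathcal{X}^{**}\to\ell^p(\mathbb{N})$, where $\mathcal{X}^{**}=\mathcal{X}$ is what lets you read the textbook equivalence ``$T$ surjective $\iff$ $T^*$ bounded below'' as a statement about $U$ (without reflexivity, ``$T^*$ bounded below on $\mathcal{X}^{**}$'' is a priori stronger than the lower frame inequality, which only controls $T^*$ on the copy of $\mathcal{X}$). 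So your route via $U$ and $U^*$ in fact proves (ii) with the reflexivity hypothesis unused, which is slightly stronger than the statement as quoted; reflexivity plays an essential role elsewhere in the Christensen--Stoeva theory (reconstruction-type results such as Theorem~\ref{CASAZZASEPARABLECHARACTERIZATION}), not in this duality step as you have arranged it.
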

  For further results on p-frames we refer \cite{LIUFEIC, STOEVALOWER, STOEVA2}.
  There are two generalizations of Definition \ref{ALDROUBITANG} which we state now.
  \begin{definition}\cite{CASAZZACHRISTENSENSTOEVA}\label{XDFRAME}
  	Let $\mathcal{X}$ be a Banach space and $\mathcal{X}_d$  be  an associated  BK-space. A collection $\{f_n\}_n$  in  $\mathcal{X}^*$ is said to be a  $\mathcal{X}_d$-frame 
  	for $\mathcal{X}$ if the following holds.
  	\begin{enumerate}[\upshape(i)]
  		\item $\{f_n(x)\}_n \in \mathcal{X}_d$, for  each $x \in \mathcal{X}$. 
  		\item There exist $a,b>0$ such that $	a\|x\|\leq \|\{f_n(x)\}_n\|\leq b\|x\|,  \forall x \in \mathcal{X}.$
  		\end{enumerate}
  	Constants $a$ and $b$ are called as $\mathcal{X}_d$-frame bounds. 
  \end{definition}
\begin{definition}\cite{CASAZZACHRISTENSENSTOEVA}
		Let $\mathcal{X}$ be a Banach space and $\mathcal{X}_d$  be  an associated  BK-space. A collection $\{\tau_n\}_n$ in $\mathcal{X}$ is said to be a  $\mathcal{X}_d$-frame 
	for $\mathcal{X}$ if the following holds.
		\begin{enumerate}[\upshape(i)]
		\item $\{f(\tau_n)\}_n \in \mathcal{X}_d$, for  each $f \in \mathcal{X}^*$. 
		\item There exist $a,b>0$ such that $	a\|f\|\leq \|\{f(\tau_n)\}_n\|\leq b\|f\|,  \forall f \in \mathcal{X}^*.$
		\end{enumerate}
	 \end{definition}
  Note that Definition \ref{XDFRAME} is the  Definition \ref{BANACHFRAMEDEF} of Banach frame without the third condition.

It is known that an $\mathcal{X}_d$-frame for a Banach space need not admit representation of every element of the Banach space. With regard to this, the following result gives information when it is possible to express element of the Banach space using series. 
  \begin{theorem}\cite{CASAZZACHRISTENSENSTOEVA}\label{CASAZZASEPARABLECHARACTERIZATION}
  	Let   $\mathcal{X}_d$ be a BK-space and 
  	$\{f_n\}_n$ be an $\mathcal{X}_d$-frame for $\mathcal{X}$. Let $ \theta_f:\mathcal{X} \ni x \mapsto \{f_n(x)\}_n \in \mathcal{X}_d$ (this map is a well-defined  linear bounded below operator). 
  	The following are equivalent. 
  	\begin{enumerate}[\upshape(i)]
  		\item $ \theta_f(\mathcal{X})$ is complemented in $\mathcal{X}_d$.
  		\item The operator $\theta_f^{-1}:\theta_f(\mathcal{X}) \rightarrow \mathcal{X}$ can be extended to a bounded linear operator $T_f: \mathcal{X}_d \rightarrow \mathcal{X}.$
  		\item There exists a bounded linear operator $S: \mathcal{X}_d \rightarrow \mathcal{X}$ such that $(\{f_n\}_n, S)$ is a Banach frame for  $\mathcal{X}$ w.r.t  $\mathcal{X}_d$.
  	\end{enumerate}
  	Also, the condition 
  	\begin{enumerate}[\upshape(i)]\addtocounter{enumi}{3}
  		\item  There exists a sequence $\{\tau_n\}_n$ in $\mathcal{X}$ such that $\sum_{n=1}^\infty a_n \tau_n$ is convergent in $\mathcal{X}$
  		for all $\{a_n\}_n$  in $\mathcal{X}_d$ and  $x=\sum_{n=1}^\infty f_n(x) \tau_n, \forall x \in \mathcal{X}.$
  	\end{enumerate}
  	implies each of (i)-(iii). If we also assume that the canonical unit vectors $\{e_n\}_n$ form a Schauder basis for $\mathcal{X}_d$,
  	(iv) is equivalent to the above (i)-(iii) and to the following condition (v).
  	\begin{enumerate}[\upshape(i)]\addtocounter{enumi}{4}
  		\item There exists an $\mathcal{X}_d^*$-Bessel sequence  $\{\tau_n\}_n\subseteq \mathcal{X}\subseteq \mathcal{X}^{**}$ for 
  		$\mathcal{X}^*$ such that $x=\sum_{n=1}^\infty f_n(x) \tau_n, $ $ \forall x \in \mathcal{X}.$
  	\end{enumerate}
  	If the canonical unit vectors $\{e_n\}_n$ form a Schauder basis for both $\mathcal{X}_d$ and $\mathcal{X}_d^*$, then (i)-(v) is equivalent
  	to the following condition (vi).
  	\begin{enumerate}[\upshape(i)]\addtocounter{enumi}{5}
  		\item There exists an $\mathcal{X}_d^*$-Bessel sequence  $\{\tau_n\}_n\subseteq \mathcal{X}\subseteq \mathcal{X}^{**}$ for 
  		$\mathcal{X}^*$ such that $f=\sum_{n=1}^\infty f(\tau_n) f_n, $ $ \forall f \in \mathcal{X}^*.$
  		\end{enumerate}
  	In each of the cases (v) and (vi), $\{\tau_n\}_n$ is actually an $\mathcal{X}_d^*$-frame for $\mathcal{X}^*$. 
  	\end{theorem}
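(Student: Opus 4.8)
The plan is to dispatch the easy block (i)$\Leftrightarrow$(ii)$\Leftrightarrow$(iii) first and then to handle the three representation conditions (iv), (v), (vi) by systematically passing between a synthesis operator and its adjoint. The starting point is that the lower frame bound makes $\theta_f$ bounded below, hence injective with closed range; by the open mapping theorem $\theta_f$ is an isomorphism of $\mathcal{X}$ onto the closed subspace $\theta_f(\mathcal{X})$, so $\theta_f^{-1}\colon \theta_f(\mathcal{X})\to\mathcal{X}$ is bounded. With this, (i)$\Rightarrow$(ii) follows by composing $\theta_f^{-1}$ with a bounded projection $P$ of $\mathcal{X}_d$ onto $\theta_f(\mathcal{X})$; (ii)$\Rightarrow$(i) because $\theta_f T_f$ is then a bounded idempotent with range $\theta_f(\mathcal{X})$; and (ii)$\Leftrightarrow$(iii) is immediate once one observes that the reconstruction identity $S(\{f_n(x)\}_n)=x$ says precisely that $S$ restricts to $\theta_f^{-1}$ on $\theta_f(\mathcal{X})$, so $S$ and $T_f$ are the same object.

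For (iv)$\Rightarrow$(iii) I would define the synthesis operator $S(\{a_n\}_n)\coloneqq \sum_n a_n\tau_n$ and show it is bounded: since $\mathcal{X}_d$ is a BK-space each partial-sum operator $S_N$ is bounded, the series hypothesis gives pointwise convergence $S_N\to S$, and the uniform boundedness principle upgrades this to $\sup_N\|S_N\|<\infty$, hence to boundedness of $S$. Feeding $\{f_n(x)\}_n$ into $S$ returns $x$, so $(\{f_n\}_n,S)$ is a Banach frame. For the reverse direction, under the hypothesis that $\{e_n\}_n$ is a Schauder basis of $\mathcal{X}_d$, I would set $\tau_n\coloneqq S(e_n)$; expanding $\{a_n\}_n=\sum_n a_ne_n$ and using continuity of $S$ recovers both the convergence of $\sum_n a_n\tau_n$ and the expansion $x=\sum_n f_n(x)\tau_n$, that is, (iv).

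The heart of the matter is (iv)$\Leftrightarrow$(v), which runs through duality. Once $\{e_n\}_n$ is a Schauder basis of $\mathcal{X}_d$, every $g\in\mathcal{X}_d^*$ is determined by $\{g(e_n)\}_n$, and a direct computation gives $(S^*f)(e_n)=f(\tau_n)$, so $S^*f$ is identified with $\{f(\tau_n)\}_n$. Thus boundedness of $S$ (equivalently of $S^*$) is exactly the assertion that $\{\tau_n\}_n$ is an $\mathcal{X}_d^*$-Bessel sequence for $\mathcal{X}^*$, while the expansion $x=\sum_n f_n(x)\tau_n$ is common to both conditions; conversely, from the Bessel bound and the Schauder expansion one obtains the Cauchy estimate $\|\sum_{n=M}^N a_n\tau_n\|\le b\,\|\sum_{n=M}^N a_ne_n\|_{\mathcal{X}_d}$ by pairing with $\mathcal{X}^*$, forcing $\sum_n a_n\tau_n$ to converge in $\mathcal{X}$ and returning (iv). The same duality yields the $\mathcal{X}_d^*$-frame conclusion: the upper bound is the Bessel bound, while taking adjoints in $S\theta_f=I_{\mathcal{X}}$ gives $\theta_f^*S^*=I_{\mathcal{X}^*}$, whence $\|f\|\le\|\theta_f\|\,\|\{f(\tau_n)\}_n\|_{\mathcal{X}_d^*}$ supplies the lower bound.

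Finally, for (i)--(v)$\Leftrightarrow$(vi) under the extra hypothesis that $\{e_n\}_n$ is also a Schauder basis of $\mathcal{X}_d^*$, I would reread $\theta_f^*S^*=I_{\mathcal{X}^*}$ as a norm-convergent series: since $\theta_f^*e_n=f_n$ and $\{e_n\}_n$ spans $\mathcal{X}_d^*$, for any $g=\{g_n\}_n\in\mathcal{X}_d^*$ the series $\sum_n g_nf_n$ converges in $\mathcal{X}^*$ to $\theta_f^*g$, and applying this to $g=S^*f=\{f(\tau_n)\}_n$ produces $f=\sum_n f(\tau_n)f_n$, which with the Bessel property is exactly (vi); the reverse (vi)$\Rightarrow$(v) comes from testing $f=\sum_n f(\tau_n)f_n$ against $x$ and comparing with $f(\sum_n f_n(x)\tau_n)$, the latter series converging by the Bessel property. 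I expect the main obstacle to be bookkeeping the two distinct roles of the Schauder-basis hypotheses and, in particular, obtaining \emph{norm} convergence in (vi) rather than merely pointwise (weak-$*$) convergence: the identity $f(x)=\sum_n f(\tau_n)f_n(x)$ is essentially free from $S\theta_f=I_{\mathcal{X}}$, but upgrading it to a series convergent in the norm of $\mathcal{X}^*$ is exactly what the Schauder basis of $\mathcal{X}_d^*$ buys, and tracking which identifications of $\mathcal{X}_d^*$ with sequence spaces are licensed at each stage is the delicate part.
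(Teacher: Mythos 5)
The paper offers no proof of this statement: it is quoted as background from \cite{CASAZZACHRISTENSENSTOEVA}, so there is no internal argument to compare yours against. Judged on its own terms, your proposal is correct, and it is essentially the standard argument of the cited source: the block (i)$\Leftrightarrow$(ii)$\Leftrightarrow$(iii) via the bounded-below analysis operator (note the open mapping theorem is not needed; the lower frame bound already gives $\|\theta_f^{-1}y\|\le a^{-1}\|y\|$ on the closed range) and the idempotent $\theta_fT_f$; (iv)$\Rightarrow$(iii) by uniform boundedness applied to the partial-sum synthesis operators $S_N$, whose individual boundedness is exactly where the BK hypothesis enters; (iii)$\Rightarrow$(iv) by $\tau_n\coloneqq Se_n$ together with continuity of $S$ on the Schauder expansion $\{a_n\}_n=\sum_n a_ne_n$; and the duality passage through $(S^*f)(e_n)=f(\tau_n)$, the Cauchy estimate $\|\sum_{n=M}^{N}a_n\tau_n\|\le b\,\|\sum_{n=M}^{N}a_ne_n\|_{\mathcal{X}_d}$, and the adjoint identity $\theta_f^*S^*=I_{\mathcal{X}^*}$ (which yields the lower $\mathcal{X}_d^*$-frame bound $\|f\|\le\|\theta_f\|\,\|\{f(\tau_n)\}_n\|_{\mathcal{X}_d^*}$) for (iv)$\Leftrightarrow$(v) and (v)$\Leftrightarrow$(vi). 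You also correctly flag the two genuinely delicate points: conditions (v) and (vi) only make sense after $\mathcal{X}_d^*$ is identified with the BK sequence space $\{\{g(e_n)\}_n : g\in\mathcal{X}_d^*\}$, an identification licensed precisely by the Schauder-basis hypothesis on $\mathcal{X}_d$; and the Schauder-basis hypothesis on $\mathcal{X}_d^*$ is exactly what upgrades the pointwise identity $f(x)=\sum_n f(\tau_n)f_n(x)$, which is free from $S\theta_f=I_{\mathcal{X}}$, to norm convergence of $\sum_n f(\tau_n)f_n$ in $\mathcal{X}^*$. In (vi)$\Rightarrow$(v), your appeal to the Bessel property for convergence of $\sum_n f_n(x)\tau_n$ is the same pairing estimate with $a_n=f_n(x)$, after which Hahn--Banach identifies the limit with $x$; spelling that out would complete the sketch, but no step fails.
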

  For further results concerning $\mathcal{X}_d$-frames, we refer the reader to \cite{STOEVAGEN, STOEVA2012, STOEVA2009}.\\
 
  In this paper we make a generalization of frame theory from Banach spaces to metric spaces. We organized this paper as follows. We first define the notion of frames for metric spaces. We give several examples and derive that separable metric spaces admit a metric $\mathcal{M}_d$-frame (Theorem \ref{METRICFRAMEEXISTS3}). We obtain some results on metric frames and then define the notion of metric frame in analogy with the notion of Banach frames (Definition \ref{METRICBANACHFRAME}). Here we give examples and provide certain partial results about existence (Theorems \ref{METRICFRAMEEXISTS} and \ref{METRICFRAMEEXISTS2}). Then we turn into representation of elements in Banach spaces using metric frames. We obtain results, in certain cases,  analogous to Theorem \ref{pFRAMECHAR} and Theorem \ref{CASAZZASEPARABLECHARACTERIZATION} (Theorems \ref{PBESSELCHAR} and \ref{PALL}, respectively). Using Arens-Eells theorem we show that we can always switch from metric space to Banach space (Theorem \ref{LIPIFFLINEAR}). We end the article by deriving some stability results (Theorems \ref{FIRSTPERTURB} and  \ref{STABILITYMA}). 
   
  \section{Frames for metric spaces}\label{FRAMESFORMETRICSPACES}
  To define frames for metric spaces we need some notions. Let $\mathcal{M}$, 
  $\mathcal{N}$ be  metric spaces. We recall that  a function $f:\mathcal{M}  \rightarrow
  \mathcal{N}$ is said to be Lipschitz if there exists $b> 0$ such that 
  \begin{align*}
  	d(f(x), f(y)) \leq b\, d(x,y), \quad \forall x, y \in \mathcal{M}.
  \end{align*}
  A Lipschitz function $f:\mathcal{M}  \rightarrow
  \mathcal{N}$ is said to be bi-Lipschitz if there exists $a> 0$ such that 
  \begin{align*}
  	a\, d(x,y) \leq d(f(x), f(y)) , \quad \forall x, y \in \mathcal{M}.
  \end{align*}
Whenever codomain of Lipschitz function is a Banach space, we have structure and distance on the space of all Lipschitz functions. These are obtained in the next definition and theorem.
  \begin{definition}\cite{WEAVER}
  	Let	$\mathcal{X}$ be a Banach space.
  	\begin{enumerate}[\upshape(i)]
  		\item Let $\mathcal{M}$ be a  metric space. The collection 	$\operatorname{Lip}(\mathcal{M}, \mathcal{X})$
  		is defined as $\operatorname{Lip}(\mathcal{M}, \mathcal{X})\coloneqq \{f:\mathcal{M} 
  		\rightarrow \mathcal{X}  \operatorname{ is ~ Lipschitz} \}.$ For $f \in \operatorname{Lip}(\mathcal{M}, \mathcal{X})$, the Lipschitz number 
  		is defined as 
  		\begin{align*}
  			\operatorname{Lip}(f)\coloneqq \sup_{x, y \in \mathcal{M}, x\neq
  				y} \frac{\|f(x)-f(y)\|}{d(x,y)}.
  		\end{align*}
  		\item Let $(\mathcal{M}, 0)$ be a pointed metric space. The collection 	$\operatorname{Lip}_0(\mathcal{M}, \mathcal{X})$
  		is defined as $\operatorname{Lip}_0(\mathcal{M}, \mathcal{X})\coloneqq \{f:\mathcal{M} 
  		\rightarrow \mathcal{X}  \operatorname{ is ~ Lipschitz ~ and } f(0)=0\}.$
  		For $f \in \operatorname{Lip}_0(\mathcal{M}, \mathcal{X})$, the Lipschitz norm
  		is defined as 
  		\begin{align*}
  			\|f\|_{\operatorname{Lip}_0}\coloneqq \sup_{x, y \in \mathcal{M}, x\neq
  				y} \frac{\|f(x)-f(y)\|}{d(x,y)}.
  		\end{align*}
  	\end{enumerate}
  	
  \end{definition}
  
  \begin{theorem}\cite{WEAVER}\label{BANACHALGEBRA}
  	Let	$\mathcal{X}$ be a Banach space.
  	\begin{enumerate}[\upshape(i)]
  		\item If $\mathcal{M}$ is a  metric space, then 	$\operatorname{Lip}(\mathcal{M},
  		\mathcal{X})$ is a semi-normed vector  space w.r.t. the semi-norm  $\operatorname{Lip}(\cdot)$.
  		\item If $(\mathcal{M}, 0)$ is a pointed metric space, then 	$\operatorname{Lip}_0(\mathcal{M},
  		\mathcal{X})$ is a Banach space w.r.t. the  norm
  		$\|\cdot\|_{\operatorname{Lip}_0}$. Further, $\operatorname{Lip}_0(\mathcal{X})\coloneqq\operatorname{Lip}_0(\mathcal{X},
  		\mathcal{X})$ is a unital Banach algebra. In particular, if $T \in \operatorname{Lip}_0(\mathcal{X})$ satisfies $
  			\|T-I_\mathcal{X}\|_{\operatorname{Lip}_0}<1,$ 
  		then $T $ is invertible and $T^{-1} \in \operatorname{Lip}_0(\mathcal{X})$.
  	\end{enumerate}
  \end{theorem}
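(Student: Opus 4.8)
The plan is to verify the stated structures one layer at a time, since each builds on the previous one.

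For part (i), I would first check that $\operatorname{Lip}(\mathcal{M}, \mathcal{X})$ is closed under addition and scalar multiplication: if $f, g$ are Lipschitz with constants $b_f, b_g$, the triangle inequality in $\mathcal{X}$ gives $\|(f+g)(x) - (f+g)(y)\| \leq (b_f + b_g)\, d(x,y)$, and $\lambda f$ is Lipschitz with constant $|\lambda| b_f$. The three semi-norm axioms for $\operatorname{Lip}(\cdot)$ then follow directly from the definition as a supremum of difference quotients: nonnegativity is clear, absolute homogeneity $\operatorname{Lip}(\lambda f) = |\lambda|\operatorname{Lip}(f)$ comes from pulling $|\lambda|$ out of the supremum, and subadditivity $\operatorname{Lip}(f+g) \le \operatorname{Lip}(f) + \operatorname{Lip}(g)$ follows by estimating each quotient before taking the supremum. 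It is only a semi-norm because every constant map has Lipschitz number $0$.

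For part (ii), the same computation shows $\|\cdot\|_{\operatorname{Lip}_0}$ is a semi-norm on $\operatorname{Lip}_0(\mathcal{M}, \mathcal{X})$; the pointed condition upgrades it to a genuine norm, since $\|f\|_{\operatorname{Lip}_0} = 0$ forces $f$ to be constant and $f(0) = 0$ then forces $f \equiv 0$. The substantive step is completeness. Given a Cauchy sequence $\{f_n\}_n$, I would first note that for every $x \in \mathcal{M}$, applying the norm estimate to the pair $x, 0$ gives $\|f_n(x) - f_m(x)\| \leq \|f_n - f_m\|_{\operatorname{Lip}_0}\, d(x, 0)$, so $\{f_n(x)\}_n$ is Cauchy in the complete space $\mathcal{X}$ and converges to a point which I denote $f(x)$. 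Next I would verify that this pointwise limit $f$ lies in $\operatorname{Lip}_0(\mathcal{M}, \mathcal{X})$: the identity $f(0) = \lim_n f_n(0) = 0$ is immediate, and passing to the limit in $\|f_n(x) - f_n(y)\| \leq \|f_n\|_{\operatorname{Lip}_0}\, d(x,y)$, using that a Cauchy sequence is norm-bounded, shows $f$ is Lipschitz. Finally, to obtain convergence in the norm, I would fix $\varepsilon > 0$, choose $N$ with $\|f_n - f_m\|_{\operatorname{Lip}_0} < \varepsilon$ for $n, m \geq N$, and let $m \to \infty$ inside the quotient $\|(f_n - f_m)(x) - (f_n - f_m)(y)\|/d(x,y)$ to conclude $\|f_n - f\|_{\operatorname{Lip}_0} \leq \varepsilon$ for all $n \geq N$.

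With completeness in hand, I would establish the Banach algebra structure of $\operatorname{Lip}_0(\mathcal{X})$ with composition as the product. Composition preserves the base point, $(S \circ T)(0) = S(0) = 0$, and the chain $\|S(T(x)) - S(T(y))\| \leq \|S\|_{\operatorname{Lip}_0}\|T(x) - T(y)\| \leq \|S\|_{\operatorname{Lip}_0}\|T\|_{\operatorname{Lip}_0}\, d(x,y)$ yields submultiplicativity $\|S \circ T\|_{\operatorname{Lip}_0} \leq \|S\|_{\operatorname{Lip}_0}\|T\|_{\operatorname{Lip}_0}$. The identity map $I_\mathcal{X}$ has Lipschitz norm $1$ and is a two-sided unit. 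For the invertibility claim I would run the standard Neumann series argument inside this Banach algebra: writing $U \coloneqq I_\mathcal{X} - T$ with $\|U\|_{\operatorname{Lip}_0} < 1$, the series $\sum_{n=0}^\infty U^n$ is absolutely convergent, hence convergent by completeness, and its sum is a two-sided inverse of $T = I_\mathcal{X} - U$, lying in $\operatorname{Lip}_0(\mathcal{X})$. The only genuinely delicate step is the completeness argument, and even there the difficulty is purely bookkeeping: one must extract the pointwise limit first, then justify the two limit passages by interchanging the supremum defining the Lipschitz norm with the pointwise limit in $\mathcal{X}$. Everything else is a direct unravelling of the definitions.
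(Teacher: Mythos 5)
The paper gives no proof of this theorem; it is quoted from Weaver's book, so your attempt must be judged against the standard argument there. Your part (i), the norm axioms, the completeness proof (pointwise limit via $\|f_n(x)-f_m(x)\|\leq \|f_n-f_m\|_{\operatorname{Lip}_0}\,d(x,0)$, then upgrading to norm convergence), and the submultiplicativity $\|S\circ T\|_{\operatorname{Lip}_0}\leq \|S\|_{\operatorname{Lip}_0}\|T\|_{\operatorname{Lip}_0}$ are all correct and match the reference. But your final step has a genuine gap: the Neumann series argument does not work in $\operatorname{Lip}_0(\mathcal{X})$. Composition of nonlinear maps is only distributive on one side: $(S_1+S_2)\circ T=S_1\circ T+S_2\circ T$ holds because addition is pointwise, but $S\circ(T_1+T_2)\neq S\circ T_1+S\circ T_2$ unless $S$ is additive. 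The telescoping that makes $\sum_{n=0}^\infty U^n$ a two-sided inverse of $I_\mathcal{X}-U$ requires distributing $U$ across the partial sums, i.e.\ linearity of $U$, which you do not have. Concretely, take $\mathcal{X}=\mathbb{R}$ and $U(x)\coloneqq \tfrac{1}{2}|x|$, so $U(0)=0$, $\|U\|_{\operatorname{Lip}_0}=\tfrac12<1$, and $T=I-U$. The series converges to $S(x)=\sum_{n=0}^\infty U^n(x)=x+|x|$, yet $T(S(-1))=T(0)=0\neq -1$: the Neumann sum is not an inverse of $T$, even though $T$ itself is invertible. So "running the standard argument inside this Banach algebra" fails precisely because the multiplication (composition) is not bilinear; for the same reason the phrase "unital Banach algebra" in the statement must be read with this one-sided distributivity in mind, and invertibility is not a formal consequence of algebra axioms.

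The correct route --- and the one in Weaver --- is the contraction mapping principle, which is where the completeness of $\mathcal{X}$ actually enters. Writing $U\coloneqq I_\mathcal{X}-T$ with $k\coloneqq\|U\|_{\operatorname{Lip}_0}<1$, fix $y\in\mathcal{X}$ and consider $\Phi_y(x)\coloneqq y+U(x)$; this is a contraction with constant $k$, so by Banach's fixed-point theorem it has a unique fixed point $x$, which is exactly the unique solution of $T(x)=y$. Hence $T$ is a bijection, and the reverse estimate $\|T(x)-T(x')\|\geq \|x-x'\|-\|U(x)-U(x')\|\geq (1-k)\|x-x'\|$ shows $T^{-1}$ is Lipschitz with $\|T^{-1}\|_{\operatorname{Lip}_0}\leq (1-k)^{-1}$; finally $T(0)=0$ gives $T^{-1}(0)=0$, so $T^{-1}\in\operatorname{Lip}_0(\mathcal{X})$. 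Everything else in your write-up can stand as is; only this last step needs to be replaced.
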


  With these information we can now define frames for metric spaces.
  \begin{definition}\label{FIRST}(p-frame for metric space)
  	Let $(\mathcal{M},d)$,  $(\mathcal{N}_n,d_n)$, $n \in \mathbb{N}$ be  
  	metric
  	spaces and $p \in (0,\infty)$. A collection $\{f_n\}_{n}$ of Lipschitz functions,  $f_n\in \operatorname{Lip}(\mathcal{M},
  	 \mathcal{N}_n)$  is said to be a metric p-frame or Lipschitz p-frame  for  $\mathcal{M}$ relative to
  	$\{\mathcal{N}_n\}_n$ if 
  	there exist $a,b>0$ such that 
  	\begin{align*}
  	a\,d(x,y)\leq \left(\sum_{n=1}^{\infty}d_n(f_n(x),
  	f_n(y))^p\right)^\frac{1}{p}\leq b\,d(x,y),\quad \forall x, y \in \mathcal{M}.
  	\end{align*}
  	If $a$ is allowed to take the value 0, then we say that $\{f_n\}_{n}$  a  metric p-Bessel sequence for  $\mathcal{M}$.
  \end{definition}
Throughout the paper, we study  the following  particular case of Definition \ref{FIRST}.
  \begin{definition}\label{PFRAMEFORMETRIC}(p-frame for metric space w.r.t. scalars)
  	Let $\mathcal{M}$ be a metric space and $p \in (0,\infty)$. A collection $\{f_n\}_{n}$ of Lipschitz functions,   $f_n \in \operatorname{Lip}(\mathcal{M}, \mathbb{K})$  is said to be a metric p-frame or Lipschitz p-frame for  $\mathcal{M}$ if there exist $a,b>0$ such that 
  	\begin{align*}
  		a\,d(x,y)\leq \left(\sum_{n=1}^{\infty}|f_n(x)-f_n(y)|^p\right)^\frac{1}{p}\leq b\,d(x,y),\quad \forall x, y \in \mathcal{M}.
  	\end{align*}
  	If we do not demand the first inequality, then we say $\{f_n\}_{n}$ is metric p-Bessel sequence for  $\mathcal{M}$.
  \end{definition}
  We now see that whenever $\mathcal{M}$ is a Banach space and $f_n$'s are linear functionals, then Definition \ref{PFRAMEFORMETRIC} reduces to Definition \ref{ALDROUBITANG}.  Before proceeding, we give  various examples.
  \begin{example}
  	Let $\{f_n\}_{n}$ be a p-frame for a Banach space $\mathcal{X}$. Choose any bi-Lipschitz function $A:\mathcal{X}\to \mathcal{X}$. Then it follows that $\{f_nA\}_{n}$ is a metric  p-frame for $\mathcal{X}$.
  \end{example}
  \begin{example}\label{1FRAMEFIRST}
  	Let $1<a<\infty.$ Let us take $\mathcal{M}\coloneqq[a,\infty)$ and define $f_n:\mathcal{M}\to \mathbb{R}$ by 
  	\begin{align*}
  		f_0(x)&\coloneqq 1, \quad \forall x \in \mathcal{M}\\
  		f_n(x)&\coloneqq \frac{(\log x)^n}{n!}, \quad \forall x \in \mathcal{M}, \forall n\geq 1.
  	\end{align*}
  	Then $f_n'(x)=\frac{(\log x)^{(n-1)}}{(n-1)!x}$, $\forall x \in \mathcal{M}, \forall n\geq1.$ Since   $f_n'$ is bounded on $\mathcal{M}$, $\forall n\geq1$ it follows that $f_n$ is Lipschitz on $\mathcal{M}$, $\forall n\geq1$. For $x, y \in \mathcal{M},$ with $x<y$, we now see that 
  	\begin{align*}
  		\sum_{n=0}^{\infty}|f_n(x)-f_n(y)|=\sum_{n=0}^{\infty}\frac{(\log
  			y)^n}{n!}-\sum_{n=0}^{\infty}\frac{(\log x)^n}{n!}
  		=e^{\log y}-e^{\log x}=y-x=|x-y|.
  	\end{align*}
  	Hence $\{f_n\}_n$ is a metric 1-frame for $\mathcal{M}$.
  \end{example}
  \begin{example}\label{1FRAMESECOND}
  	Let $1<a<b<\infty.$ We  take $\mathcal{M}\coloneqq[\frac{1}{1-a},\frac{1}{1-b}]$ and define $f_n:\mathcal{M}\to \mathbb{R}$ by 
  	\begin{align*}
  		f_n(x)&\coloneqq\left(1-\frac{1}{x}\right)^n, \quad \forall x \in \mathcal{M}, \forall n \geq 0.
  	\end{align*}
  	Then $f_n'(x)=\frac{n}{-x^2}\left(1-\frac{1}{x}\right)^{n-1}$, $\forall x \in \mathcal{M}, \forall n\geq1.$ Therefore $f_n$ is a Lipschitz function, for each $n\geq1.$ We now see that $\{f_n\}_n$ is a metric 1-frame for $\mathcal{M}$. In fact, for  $x, y \in \mathcal{M},$ with $x<y$, we have
  	\begin{align*}
  		\sum_{n=0}^{\infty}|f_n(x)-f_n(y)|=\sum_{n=0}^{\infty}\left(1-\frac{1}{y}\right)^n-\sum_{n=0}^{\infty}\left(1-\frac{1}{x}\right)^n
  		=y-x=|x-y|.
  	\end{align*}
  	\end{example}
  \begin{example}\label{LINEARGOOD}
  	Let $\{f_n\}_{n}$ be a p-frame for a Banach space $\mathcal{X}$. Let $\phi: \mathbb{K}\to  \mathbb{K}$ be bi-Lipschitz  and define   $g_n (x)\coloneqq \phi (f_n(x)), $ $\forall x \in \mathcal{X}, $  $\forall n \in \mathbb{N}$. It then follows that $\{g_n\}_n  $ is a metric p-frame for $\mathcal{X}$.
  \end{example}

  By looking at Theorem \ref{pFRAMECHAR} we can ask whether there is a result similar for metric p-frames. We answer this partially through the following theorem.
  \begin{theorem}\label{PBESSELCHAR}
  	Let $(\mathcal{M},0)$ be  a pointed metric  space and $\{f_n\}_{n}$ be a sequence in $\operatorname{Lip}_0(\mathcal{M},
  	\mathbb{K})$.  Then $\{f_n\}_{n}$ is a metric p-Bessel sequence for 
  	$\mathcal{M}$ with bound $b$ if and only if 
  	\begin{align}\label{LIPBASSELOPERATORCHARACTERIZATION}
  		&T: \ell^q (\mathbb{N})\ni \{a_n\}_{n} \mapsto T\{a_n\}_{n} \in \operatorname{Lip}_0(\mathcal{M}\times \mathcal{M},
  		\mathbb{K}),\\
  		&T\{a_n\}_{n}: \mathcal{M}\times \mathcal{M} \ni (x,y) \mapsto  \sum_{n=1}^\infty a_n(f_n(x)-f_n(y)) \in \mathbb{K} \nonumber
  	\end{align}
  	is a well-defined (hence bounded) operator and $\|T\|\leq b$ (where $q$ is conjugate
  	index of $p$).
  	
  \end{theorem}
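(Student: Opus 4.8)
The plan is to imitate the H\"older/duality proof of the classical Theorem~\ref{pFRAMECHAR}(i), carrying the $\ell^p$--$\ell^q$ pairing over to the Lipschitz setting on the product space. I equip $\mathcal{M}\times\mathcal{M}$ with the sum metric $d((x,y),(x',y'))=d(x,x')+d(y,y')$ and basepoint $(0,0)$; the one feature I will exploit repeatedly is the identity $d((x,y),(y,y))=d(x,y)$. I work with $1\le p<\infty$ and conjugate index $q$ (so $q=\infty$ when $p=1$), which is exactly the range in which H\"older's inequality, Minkowski's inequality, and the duality formula $\big(\sum_n|c_n|^p\big)^{1/p}=\sup\{|\sum_n a_nc_n|:\ \{a_n\}\in\ell^q \text{ finitely supported},\ \|\{a_n\}\|_q\le 1\}$ are all at my disposal.

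For the forward implication, suppose $\{f_n\}_n$ is a metric $p$-Bessel sequence with bound $b$. Fix $\{a_n\}_n\in\ell^q$. The Bessel inequality says precisely that $\{f_n(x)-f_n(y)\}_n\in\ell^p$ with $\ell^p$-norm at most $b\,d(x,y)$, so H\"older's inequality makes $\sum_n a_n(f_n(x)-f_n(y))$ converge absolutely and gives $|T\{a_n\}(x,y)|\le\|\{a_n\}\|_q\,b\,d(x,y)$; in particular $T\{a_n\}(0,0)=0$. To see that $T\{a_n\}$ is Lipschitz on the product, I set $c_n\coloneqq (f_n(x)-f_n(x'))-(f_n(y)-f_n(y'))$ and bound $\big(\sum_n|c_n|^p\big)^{1/p}$, via Minkowski and the Bessel bound, by $b\,d(x,x')+b\,d(y,y')=b\,d((x,y),(x',y'))$; H\"older then yields $|T\{a_n\}(x,y)-T\{a_n\}(x',y')|\le\|\{a_n\}\|_q\,b\,d((x,y),(x',y'))$. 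Hence $T\{a_n\}\in\operatorname{Lip}_0(\mathcal{M}\times\mathcal{M},\mathbb{K})$ with $\|T\{a_n\}\|_{\operatorname{Lip}_0}\le b\,\|\{a_n\}\|_q$, so $T$ is a well-defined linear operator with $\|T\|\le b$.

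For the reverse implication, assume $T$ is well-defined (hence bounded, by the closed graph theorem, since $\operatorname{Lip}_0(\mathcal{M}\times\mathcal{M},\mathbb{K})$ is a Banach space by Theorem~\ref{BANACHALGEBRA}) with $\|T\|\le b$. Fix $x,y\in\mathcal{M}$ and any finitely supported $\{a_n\}_n$ with $\|\{a_n\}\|_q\le 1$. Since $T\{a_n\}(y,y)=0$, the $\operatorname{Lip}_0$-bound gives $\big|\sum_n a_n(f_n(x)-f_n(y))\big|=|T\{a_n\}(x,y)-T\{a_n\}(y,y)|\le\|T\{a_n\}\|_{\operatorname{Lip}_0}\,d((x,y),(y,y))\le b\,d(x,y)$, using $d((x,y),(y,y))=d(x,y)$. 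Taking the supremum over all such finitely supported $\{a_n\}_n$ and invoking the duality formula yields $\big(\sum_n|f_n(x)-f_n(y)|^p\big)^{1/p}\le b\,d(x,y)$, which is the desired Bessel inequality.

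I expect the only real friction to be bookkeeping rather than a genuine obstacle: one must fix the product-metric convention so that $d((x,y),(y,y))=d(x,y)$ \emph{and} $d((x,y),(x',y'))=d(x,x')+d(y,y')$ (the $\ell^1$-type metric is what matches the constant $b$ on both sides; a max-metric would cost a factor of $2$ in the forward step), confirm absolute convergence and $\ell^p$-membership before applying H\"older, and restrict to $p\ge 1$ so that Minkowski and a genuine conjugate exponent are available. The one nonroutine move is the reverse step, where the $\ell^p$ norm is recovered by testing against finitely supported $\ell^q$ vectors, which sidesteps having to assume $\{f_n(x)-f_n(y)\}_n\in\ell^p$ in advance.
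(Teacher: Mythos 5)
Your proposal is correct and follows essentially the same path as the paper's proof: H\"older together with the Bessel bound gives well-definedness and the $\operatorname{Lip}_0$ estimate on the product with the same sum metric, and the reverse direction uses the identical trick of evaluating the Lipschitz quotient at pairs of points agreeing in one coordinate (you test $(x,y)$ against $(y,y)$, the paper tests $(x,0)$ against $(y,0)$, with the same effect since $d((x,y),(y,y))=d(x,y)=d((x,0),(y,0))$). The only variation is that you recover $\left(\sum_n|f_n(x)-f_n(y)|^p\right)^{1/p}$ by testing against finitely supported $\ell^q$-vectors of norm at most one, whereas the paper realizes $\Phi_{x,y}$ as a bounded functional on $\ell^q$ and computes its norm via the basis $\{e_n\}$; your version is a minor but slightly more careful variant, since at $p=1$, $q=\infty$ the paper's identification of $\|\Phi_{x,y}\|$ with an $\ell^1$-norm is delicate ($(\ell^\infty)^*\neq\ell^1$), while the finitely-supported duality formula you invoke remains valid.
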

  \begin{proof}
  	$(\Rightarrow)$ Let $\{a_n\}_{n} \in \ell^q (\mathbb{N})$ and $n, m\in \mathbb{N}$ with $n<m$.  First we have to show that the series in (\ref{LIPBASSELOPERATORCHARACTERIZATION}) is convergent. For all $x, y \in \mathcal{M}$, 
  	\begin{align*}
  		\left|\sum_{k=n}^{m}a_k(f_k(x)-f_k(y))\right|&\leq \left(\sum_{k=n}^{m}|a_k|^q\right)^\frac{1}{q}\left(\sum_{k=n}^{m}|f_k(x)-f_k(y)|^p\right)^\frac{1}{p}\\
  		&\leq b \left(\sum_{k=n}^{m}|a_k|^q\right)^\frac{1}{q}\, d(x,y).
  	\end{align*}
  	Therefore the series in (\ref{LIPBASSELOPERATORCHARACTERIZATION})  converges. We next show that the map $T\{a_n\}_{n}$ is Lipschitz. Consider 
  		\begin{align*}
  		\left\|T\{a_n\}_{n}\right\|_{\operatorname{Lip}_0}& =\sup_{(x, y), (u,v) \in \mathcal{M}\times \mathcal{M}, (x, y)\neq (u,v)}\frac{|T\{a_n\}_{n}(x,y)-T\{a_n\}_{n}(u,v)|}{d(x,u)+d(y,v)}\\
  		&=\sup_{(x, y), (u,v) \in \mathcal{M}\times \mathcal{M}, (x, y)\neq (u,v)}\frac{|\sum_{n=1}^{\infty}a_n(f_n(x)-f_n(u))-\sum_{n=1}^{\infty}a_n(f_n(y)-f_n(v))|}{d(x,u)+d(y,v)}\\
  		&\leq \sup_{(x, y), (u,v) \in \mathcal{M}\times \mathcal{M}, (x, y)\neq (u,v)}\frac{|\sum_{n=1}^{\infty}a_n(f_n(x)-f_n(u))|+|\sum_{n=1}^{\infty}a_n(f_n(y)-f_n(v))|}{d(x,u)+d(y,v)}\\
  		&\leq b\sup_{(x, y), (u,v) \in \mathcal{M}\times \mathcal{M}, (x, y)\neq (u,v)}\frac{\left(\sum_{n=1}^{\infty}|a_n|^q\right)^\frac{1}{q}\, d(x,u)+\left(\sum_{n=1}^{\infty}|a_n|^q\right)^\frac{1}{q}\, d(y,v)}{d(x,u)+d(y,v)}\\
  		&=b\left(\sum_{n=1}^{\infty}|a_n|^q\right)^\frac{1}{q}.
  	\end{align*}
  	Hence $T$ is well-defined. Clearly $T$ is linear.  Boundedness of $T$ with bound $b$ will follow from previous calculation.\\
  	$(\Leftarrow)$	Banach-Steinhaus theorem tells that $T$ is bounded. Given $x, y \in \mathcal{M}$,  we define a map 
  	\begin{align*}
  		\Phi_{x,y}: \ell^q (\mathbb{N})  \ni \{a_n\}_{n} \mapsto \Phi_{x,y}\{a_n\}_{n}\coloneqq \sum_{n=1}^{\infty}a_n(f_n(x)-f_n(y))\in \mathbb{K}
  	\end{align*} 
  	which is a bounded linear functional. Hence $\{f_n(x)-f_n(y)\}_{n}\in \ell^p (\mathbb{N})$. Let $\{e_n\}_{n}$ be the standard Schauder basis for $ \ell^p (\mathbb{N})$. Then 
  	\begin{align*}
  		\|\Phi_{x,y}\|=\left(\sum_{n=1}^{\infty}|\Phi_{x,y}\{e_n\}_{n}|^p\right)^\frac{1}{p}= \left(\sum_{n=1}^{\infty}|f_n(x)-f_n(y)|^p\right)^\frac{1}{p}.
  	\end{align*}
  	Now 
  	\begin{align*}
  		b\left(\sum_{n=1}^{\infty}|a_n|^q\right)^\frac{1}{q}&=b\|\{a_n\}_{n}\|\geq \|T\{a_n\}_{n}\|_{\operatorname{Lip}_0}\\
  		&\geq \sup_{(x, 0), (y,0) \in \mathcal{M}\times \mathcal{M}, (x, 0)\neq (y,0)}\frac{|T\{a_n\}_{n}(x,0)-T\{a_n\}_{n}(y,0)|}{d(x,y)}\\
  		&=\sup_{(x, 0), (y,0) \in \mathcal{M}\times \mathcal{M}, (x, 0)\neq (y,0)}\frac{|\sum_{n=1}^{\infty}a_n(f_n(x)-f_n(y))|}{d(x,y)}\\
  		&=\sup_{(x, 0), (y,0) \in \mathcal{M}\times \mathcal{M}, (x, 0)\neq (y,0)}\frac{|\Phi_{x,y}\{a_n\}_{n}|}{d(x,y)}
  	\end{align*}
  	which implies 
  	\begin{align*}
  		|\Phi_{x,y}\{a_n\}_{n}|\leq b \left(\sum_{n=1}^{\infty}|a_n|^q\right)^\frac{1}{q}\,d(x,y), \quad \forall x, y \in \mathcal{M}.
  	\end{align*}
  	Using all these, we finally get 
  	\begin{align*}
  		\left(\sum_{n=1}^{\infty}|f_n(x)-f_n(y)|^p\right)^\frac{1}{p}=\|\Phi_{x,y}\|\leq b\, d(x,y), \quad \forall x, y \in \mathcal{M}.
  	\end{align*}
  Hence 	$\{f_n\}_{n}$ is a metric p-Bessel sequence for 
  	$\mathcal{M}$ with bound $b$.
  \end{proof}
  Previous theorem leads to the question: What are all the metric spaces for which the following statement holds? 
 ``$\{f_n\}_{n}$ is a metric p-frame 
  for $\mathcal{M}$ if and only if the operator $T$ in
  (\ref{LIPBASSELOPERATORCHARACTERIZATION}) is surjective".\\
  In the spirit of definition of $\mathcal{X}_d$-frame, Definition \ref{PFRAMEFORMETRIC} can be generalized.
  
  \begin{definition}\label{XDMETRICFRAME}
  	Let $\mathcal{M}$ be a metric space and $\mathcal{M}_d$ be an associated BK-space. Let
  	$\{f_n\}_{n}$ be a sequence in $\operatorname{Lip}(\mathcal{M}, \mathbb{K})$. If: 
  	\begin{enumerate}[\upshape(i)]
  		\item $\{f_n(x)\}_{n} \in \mathcal{M}_d$, for each  $x \in \mathcal{M}$,
  		\item There exist positive $a, b$  such that 
  		$
  			a\, d(x,y) \leq \|\{f_n(x)-f_n(y)\}_n\|_{\mathcal{M}_d} \leq b\, d(x,y),  \forall x
  			, y\in \mathcal{M},
  		$
  	\end{enumerate}
  	then we say that $\{f_n\}_{n}$ is a  metric $\mathcal{M}_d$-frame (or Lipschitz $\mathcal{M}_d$-frame) for $\mathcal{M}$ w.r.t. $\mathcal{M}_d$. 	If we do not demand the first inequality, then we say $\{f_n\}_{n}$ is metric $\mathcal{M}_d$-Bessel sequence.
  \end{definition}
  An easier  way of producing metric $\mathcal{M}_d$-frame is the following. Let $\mathcal{M}_d$ be a BK-space which admits a Schauder basis $\{\tau_n\}_{n}$. Let  $\{f_n\}_{n}$ be the coefficient functionals associated with $\{\tau_n\}_{n}$. Let $\mathcal{M}$ be a metric space and $A:\mathcal{M} \rightarrow \mathcal{M}_d$ be bi-Lipschitz with bounds $a$ and $b$. Define $g_n\coloneqq f_n A, \forall n$. Then $g_n$ is a Lipschitz function for all $n$. Now 
  \begin{align*}
  	a\, d(x,y) &\leq \|Ax-Ay\|_{\mathcal{M}_d}=\|\{f_n(Ax-Ay)\}_n\|_{\mathcal{M}_d}\\
  	&=\|\{f_n(Ax)-f_n(Ay)\}_n\|_{\mathcal{M}_d} 
  	=\|\{g_n(x)-g_n(y)\}_n\|_{\mathcal{M}_d} \leq b\, d(x,y), \quad \forall x
  	, y\in \mathcal{M}.
  \end{align*}
  Hence $\{g_n\}_{n}$ is a  metric $\mathcal{M}_d$-frame for $\mathcal{M}$ w.r.t. $\mathcal{M}_d$.\\
  Following result ensures that metric frames are universal in nature.
  \begin{theorem}\label{METRICFRAMEEXISTS3}
  	Every separable metric space $ \mathcal{M}$ admits a metric $\mathcal{M}_d$-frame.
  \end{theorem}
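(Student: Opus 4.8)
The plan is to realize $\mathcal{M}$ isometrically inside $\ell^\infty(\mathbb{N})$ by the classical Fr\'echet--Kuratowski embedding and then read off the frame from the coordinate maps. Since $\mathcal{M}$ is separable, first I would fix a countable dense set $\{x_n\}_{n\in\mathbb{N}}\subseteq\mathcal{M}$ together with a basepoint, say $x_1$, and define
\begin{align*}
f_n:\mathcal{M}\ni x\mapsto d(x,x_n)-d(x_1,x_n)\in\mathbb{R}, \quad n\in\mathbb{N}.
\end{align*}
The reverse triangle inequality gives $|f_n(x)-f_n(y)|=|d(x,x_n)-d(y,x_n)|\le d(x,y)$ for all $x,y$, so each $f_n$ is Lipschitz with $\operatorname{Lip}(f_n)\le 1$, i.e. $f_n\in\operatorname{Lip}(\mathcal{M},\mathbb{K})$. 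The candidate BK-space is $\mathcal{M}_d\coloneqq\ell^\infty(\mathbb{N})$, whose coordinate functionals are continuous.

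Next I would verify the two conditions of Definition \ref{XDMETRICFRAME}. For condition (i): since $|f_n(x)|=|d(x,x_n)-d(x_1,x_n)|\le d(x,x_1)$ uniformly in $n$, the subtraction of the basepoint term keeps the sequence bounded, so $\{f_n(x)\}_n\in\ell^\infty(\mathbb{N})$ for every $x$. This is precisely why the shift by $d(x_1,x_n)$ is built in; crucially it does not affect the differences, since $f_n(x)-f_n(y)=d(x,x_n)-d(y,x_n)$. For the frame inequality note that
\begin{align*}
\big\|\{f_n(x)-f_n(y)\}_n\big\|_{\ell^\infty}=\sup_{n}\,|d(x,x_n)-d(y,x_n)|,
\end{align*}
and the upper estimate $\le d(x,y)$ (giving $b=1$) is immediate from the reverse triangle inequality recorded above.

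The step I expect to carry the argument is the lower bound, and this is where separability is used. Here I would choose, by density, a subsequence $x_{n_k}\to y$; then $d(x,x_{n_k})\to d(x,y)$ and $d(y,x_{n_k})\to 0$, so $|d(x,x_{n_k})-d(y,x_{n_k})|\to d(x,y)$. Hence the supremum is at least $d(x,y)$, yielding $a=1$, and together with the upper bound the two coincide, so the map is an isometry. Combining the three verifications shows that $\{f_n\}_n$ is a metric $\mathcal{M}_d$-frame for $\mathcal{M}$ with bounds $a=b=1$ and $\mathcal{M}_d=\ell^\infty(\mathbb{N})$. The only non-formal ingredient is this density approximation in the lower estimate; every other step is the triangle inequality, so I do not anticipate a serious obstacle beyond making that approximation precise.
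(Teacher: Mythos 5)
Your proof is correct, but it takes a genuinely different route from the paper. The paper invokes Aharoni's theorem to obtain a bi-Lipschitz embedding $f:\mathcal{M}\to c_0(\mathbb{N})$ and then sets $f_n\coloneqq p_n f$ with the coordinate projections $p_n$, so its frame lives with respect to $c_0(\mathbb{N})$; you instead use the elementary Fr\'echet--Kuratowski embedding $f_n(x)=d(x,x_n)-d(x_1,x_n)$ into $\ell^\infty(\mathbb{N})$, verifying the upper bound by the reverse triangle inequality and the lower bound by approximating $y$ with a dense subsequence $x_{n_k}\to y$, which gives $\sup_n|f_n(x)-f_n(y)|=d(x,y)$. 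Both proofs share the same template (bi-Lipschitz embedding into a BK-space composed with coordinate functionals, exactly the mechanism the paper describes right after Definition \ref{XDMETRICFRAME}), but the key lemma differs. What your argument buys: it is self-contained, uses nothing beyond the triangle inequality and density, and achieves the optimal bounds $a=b=1$ since the embedding is isometric; note that the identity $d(x,y)=\sup_n|f_n(x)-f_n(y)|$ you establish is essentially what the paper re-derives by McShane extensions in its Theorem \ref{METRICFRAMEEXISTS}. What the paper's argument buys: the target $c_0(\mathbb{N})$ is separable and its canonical unit vectors form a Schauder basis, which is structurally preferable for the representation-theoretic results later in the paper (compare the hypotheses in Theorem \ref{CASAZZASEPARABLECHARACTERIZATION}), whereas your target $\ell^\infty(\mathbb{N})$ is nonseparable and $\{f_n(x)\}_n$ is in general only bounded, not in $c_0$; the price the paper pays is reliance on the nontrivial Aharoni theorem and the loss of isometry (only bi-Lipschitz bounds). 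Your density approximation in the lower estimate is precise as stated, and $\ell^\infty(\mathbb{N})$ is indeed a BK-space in the paper's sense, so there is no gap.
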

  \begin{proof}
  	From Aharoni's theorem \cite{KALTONLANCIEN} it follows that there exists a bi-Lipschitz function $f: \mathcal{M} \to c_0(\mathbb{N})$. Let $ p_n: c_0(\mathbb{N}) \to \mathbb{K}$ be the coordinate projection, for each $n$. If we now set 	$f_n\coloneqq p_nf$, for each $n$, then $\{f_n\}_{n}$ is a metric frame  for $\mathcal{M}$ w.r.t. $c_0(\mathbb{N})$.
  \end{proof}
Given metric $\mathcal{M}_d$-frames  $\{f_n\}_{n}$, $\{g_n\}_{n}$ and a nonzero  scalar $\lambda$, one naturally asks whether we can scale and add them to get new frames? i.e.,  whether $\{f_n+\lambda g_n\}_{n}$ is a frame? In the case of Hilbert spaces, a use of Minkowski's inequality shows that whenever $\{\tau_n\}_{n}$ and $\{\omega_n\}_{n}$ are frames for a Hilbert space $\mathcal{H}$, then the linear combination $\{\tau_n+\lambda \omega_n\}_{n}$ is a Bessel sequence for $\mathcal{H}$. In general this sequence need not be a frame for $\mathcal{H}$. Thus we have to impose extra conditions to ensure the existence of  lower frame bound. For Hilbert spaces this is done by Favier and Zalik \cite{FAVIERZALIK}. We now obtain similar results for metric spaces.
\begin{theorem}
Let 	$\{f_n\}_{n}$ be a  metric $\mathcal{M}_d$-frame for metric space $\mathcal{M}$ with bounds $a$ and $b$. Let $\lambda$ be a non-zero  scalar. Then 
\begin{enumerate}[\upshape(i)]
	\item   $\{\lambda f_n\}_{n}$ is a metric  $\mathcal{M}_d$-frame for  $\mathcal{M}$ with bounds $a\lambda$ and $b\lambda$.
	\item If $A:\mathcal{M} \rightarrow \mathcal{M}$ is bi-Lipschitz with bounds $c$ and $d$, then $\{f_nA\}_{n}$ is a metric  $\mathcal{M}_d$-frame for  $\mathcal{M}$ with bounds $ac$ and $bd$.
	\item If $\{g_n\}_{n}$ is a metric $\mathcal{M}_d$-Bessel sequence for $\mathcal{M}$ with bound $d$ and $|\lambda|<\frac{a}{d}$, then $\{f_n+\lambda g_n\}_{n}$ is a metric $\mathcal{M}_d$-frame for  $\mathcal{M}$ with bounds $a-|\lambda|d$ and $b+|\lambda|d$.
\end{enumerate}
\end{theorem}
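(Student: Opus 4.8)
The plan is to verify all three parts directly from the defining two-sided inequality of a metric $\mathcal{M}_d$-frame (Definition \ref{XDMETRICFRAME}), exploiting that $\mathcal{M}_d$ is a normed (BK-)space, so that absolute homogeneity $\|\lambda\{c_n\}_n\|_{\mathcal{M}_d}=|\lambda|\,\|\{c_n\}_n\|_{\mathcal{M}_d}$ and both the forward and the reverse triangle inequalities are available. In every case the membership requirement (i) of Definition \ref{XDMETRICFRAME} is immediate: $\mathcal{M}_d$ is a vector space closed under scalar multiples and sums, and compositions of Lipschitz maps are again Lipschitz, so the sequences involved land in $\mathcal{M}_d$ and the associated functions are Lipschitz. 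The entire content therefore sits in the norm estimate (ii).

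For part (i), I would observe that $\{(\lambda f_n)(x)-(\lambda f_n)(y)\}_n=\lambda\,\{f_n(x)-f_n(y)\}_n$ and apply homogeneity, so that multiplying the frame inequality for $\{f_n\}_n$ by $|\lambda|$ yields the bounds $a|\lambda|$ and $b|\lambda|$; note that the stated bounds should be read with $|\lambda|$ in place of $\lambda$. For part (ii), set $g_n\coloneqq f_nA$, which is Lipschitz as a composition, and substitute $Ax,Ay$ into the frame inequality to get $a\,d(Ax,Ay)\leq\|\{g_n(x)-g_n(y)\}_n\|_{\mathcal{M}_d}\leq b\,d(Ax,Ay)$; chaining this with the bi-Lipschitz estimate $c\,d(x,y)\leq d(Ax,Ay)\leq d\,d(x,y)$ then produces the bounds $ac$ and $bd$.

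For part (iii), the upper estimate follows at once from the triangle inequality in $\mathcal{M}_d$ combined with the Bessel bound for $\{g_n\}_n$, giving $b+|\lambda|d$. The lower estimate is where the only real care is needed, and I expect it to be the (mild) crux: using the reverse triangle inequality I would bound $\|\{(f_n+\lambda g_n)(x)-(f_n+\lambda g_n)(y)\}_n\|_{\mathcal{M}_d}\geq\|\{f_n(x)-f_n(y)\}_n\|_{\mathcal{M}_d}-|\lambda|\,\|\{g_n(x)-g_n(y)\}_n\|_{\mathcal{M}_d}\geq(a-|\lambda|d)\,d(x,y)$, where the hypothesis $|\lambda|<\frac{a}{d}$ is precisely what guarantees $a-|\lambda|d>0$ and hence a genuine positive lower frame bound. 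Thus the only obstacle is the correct bookkeeping of the reverse triangle inequality together with the verification of positivity; the rest is a direct transcription of the classical Hilbert- and Banach-space perturbation arguments into the $\mathcal{M}_d$-norm setting.
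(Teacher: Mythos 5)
Your proof is correct and follows essentially the same route as the paper: norm homogeneity for (i), chaining the frame inequality with the bi-Lipschitz bounds for (ii), and the triangle/reverse-triangle inequalities with $|\lambda|<\frac{a}{d}$ ensuring positivity for (iii). In fact your lower-bound computation is slightly more careful than the paper's own display, which drops the Bessel bound and writes $a\,d(x,y)-|\lambda|\,d(x,y)$ where $a\,d(x,y)-|\lambda|\,d\,d(x,y)$ is needed to reach the stated bound $a-|\lambda|d$; your remark that the bounds in (i) should be read with $|\lambda|$ in place of $\lambda$ is likewise apt.
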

\begin{proof}
First two conclusions follow easily. For the upper frame bound of $\{f_n+\lambda g_n\}_{n}$ 	we use  triangle inequality. Now for lower frame bound, using reverse triangle inequality, we get 

\begin{align*}
\|\{(f_n+\lambda g_n)(x)-(f_n+\lambda g_n)(y)\}_n\|_{\mathcal{M}_d}&=\|\{f_n(x)-f_n(y)+ \lambda( g_n(x)- g_n(y))\}_n\|_{\mathcal{M}_d}\\
&\geq \|\{f_n(x)-f_n(y)\}_n\|_{\mathcal{M}_d}-\|\{ \lambda( g_n(x)- g_n(y))\}_n\|_{\mathcal{M}_d}\\
 &\geq  a\, d(x,y)- |\lambda| \, d(x,y)
 =(a-|\lambda|)\, d(x,y), \quad \forall x
, y\in \mathcal{M}.
\end{align*}
\end{proof}
We next define  ``metric frame" which is stronger than    Definition \ref{XDMETRICFRAME} in light of definition of Banach frame.
  
  \begin{definition}\label{METRICBANACHFRAME}
  	Let $\mathcal{M}$ be a metric space and $\mathcal{M}_d$ be an associated  BK-space. Let
  	$\{f_n\}_{n}$ be a sequence in $\operatorname{Lip}(\mathcal{M}, \mathbb{K})$
  	and $S: \mathcal{M}_d \rightarrow \mathcal{M}$. If: 
  	\begin{enumerate}[\upshape(i)]
  		\item $\{f_n(x)\}_{n} \in \mathcal{M}_d$, for each  $x \in \mathcal{M}$,
  		\item There exist positive $a, b$  such that 
  		$
  			a\, d(x,y) \leq \|\{f_n(x)-f_n(y)\}_n\|_{\mathcal{M}_d} \leq b\, d(x,y),  \forall x
  			, y\in \mathcal{M},
  		$
  		\item $S$ is Lipschitz and $S(\{f_n(x)\}_{n})=x$, for each $x \in \mathcal{M}$,
  	\end{enumerate}
  	then we say that $(\{f_n\}_{n}, S)$ is a metric frame or Lipschitz metric
  	frame for $\mathcal{M}$ w.r.t. $\mathcal{M}_d$. Mapping $S$ is called as
  	Lipschitz reconstruction operator. We say  constant $a$ as lower frame bound
  	and constant $b$ as upper frame bound. If we do not demand the first inequality, then we say $(\{f_n\}_{n}, S)$ is a metric  Bessel sequence.
  \end{definition}
  We observe that if $(\{f_n\}_{n}, S)$ is a metric frame  for $\mathcal{M}$ w.r.t. $\mathcal{M}_d$, then condition (i) in Definition \ref{METRICBANACHFRAME} tells 
  that the mapping (we call as analysis map)
  \begin{align*}
  	\theta_f:\mathcal{M} \ni x \mapsto \theta_f x\coloneqq \{f_n(x)\}_{n} \in \mathcal{M}_d
  \end{align*}
  is well-defined and condition (ii) in Definition \ref{METRICBANACHFRAME} tells that $\theta_f$ satisfies 
  \begin{align*}
  	a\, d(x,y)\leq \|\theta_f x -\theta_fy \|\leq b\, d(x,y), \quad \forall x
  	, y\in \mathcal{M}.
  \end{align*}
  Hence $\theta_f$ is bi-Lipschitz and injective. Thus a  metric frame puts the space 
  into $\mathcal{M}_d$ via $\theta_f$ and reconstructs 
  every element by using reconstruction operator $S$. Now note that $S\theta_f =I_\mathcal{M}$. This operator description helps us to derive the following propositions easily.
  \begin{proposition}
  	If $(\{f_n\}_{n}, S)$ is a metric frame  for $\mathcal{M}$ w.r.t. $\mathcal{M}_d$, then 	$P_{f, S}\coloneqq \theta_f S: \mathcal{M}_d \to \mathcal{M}_d$ is idempotent and $P_{f, S}(\mathcal{M}_d)=\theta_f(\mathcal{M}_d).$
  \end{proposition}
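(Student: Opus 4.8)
The plan is to reduce everything to the single identity $S\theta_f = I_{\mathcal{M}}$, which is recorded in the discussion just before the statement: by condition (iii) of Definition \ref{METRICBANACHFRAME} we have $S(\{f_n(x)\}_n) = x$ for every $x \in \mathcal{M}$, and since $\theta_f x = \{f_n(x)\}_n$ this says precisely $S\theta_f x = x$. This is the only structural input I expect to need; everything else is a formal manipulation of the composition $P_{f,S} = \theta_f S$, which is a well-defined self-map of $\mathcal{M}_d$ because $S : \mathcal{M}_d \to \mathcal{M}$ and $\theta_f : \mathcal{M} \to \mathcal{M}_d$ are the already-established Lipschitz maps.

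First I would check idempotence by a direct computation using associativity of composition:
\[
P_{f,S}^2 = (\theta_f S)(\theta_f S) = \theta_f (S\theta_f) S = \theta_f\, I_{\mathcal{M}}\, S = \theta_f S = P_{f,S}.
\]
No inequalities or convergence estimates enter here; the Lipschitz and BK structure serve only to guarantee that the compositions make sense.

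For the range statement I would prove two inclusions, reading the right-hand side as the range $\theta_f(\mathcal{M})$ of the analysis map. The forward inclusion $P_{f,S}(\mathcal{M}_d) \subseteq \theta_f(\mathcal{M})$ holds because $S(\mathcal{M}_d) \subseteq \mathcal{M}$, so every point in the image of $P_{f,S} = \theta_f S$ has the form $\theta_f(Su)$ with $Su \in \mathcal{M}$. For the reverse inclusion, I would take an arbitrary $\theta_f x$ with $x \in \mathcal{M}$ and compute $P_{f,S}(\theta_f x) = \theta_f(S\theta_f x) = \theta_f x$; since $\theta_f x \in \mathcal{M}_d$ is fixed by $P_{f,S}$, it belongs to the range $P_{f,S}(\mathcal{M}_d)$. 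Together these give the claimed equality $P_{f,S}(\mathcal{M}_d) = \theta_f(\mathcal{M})$.

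I do not foresee a genuine mathematical obstacle. The one point that needs care is purely notational: as written, $\theta_f(\mathcal{M}_d)$ is undefined, since the domain of $\theta_f$ is $\mathcal{M}$ and not $\mathcal{M}_d$, and the intended object is the range $\theta_f(\mathcal{M})$. Beyond this, the result is simply the standard fact that a composition $\theta_f S$ satisfying the one-sided relation $S\theta_f = I_{\mathcal{M}}$ is an idempotent whose range coincides with its fixed-point set $\theta_f(\mathcal{M})$.
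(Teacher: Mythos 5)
Your proof is correct and is exactly the argument the paper intends: the proposition is stated as an easy consequence of the identity $S\theta_f = I_{\mathcal{M}}$ noted just before it, and your computation $P_{f,S}^2 = \theta_f(S\theta_f)S = \theta_f S$ together with the two range inclusions is that argument spelled out. You are also right that $\theta_f(\mathcal{M}_d)$ in the statement is a typo for $\theta_f(\mathcal{M})$, since the domain of $\theta_f$ is $\mathcal{M}$.
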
 
  \begin{proposition}
  	Let $\{f_n\}_{n}$ be a  $\mathcal{M}_d$-frame  for $\mathcal{M}$ w.r.t. $\mathcal{M}_d$ and $S: \mathcal{M}_d \rightarrow \mathcal{M}$ be Lipschitz. Then $(\{f_n\}_{n}, S)$ is a metric frame  for $\mathcal{M}$ w.r.t. $\mathcal{M}_d$ if and only if $S$ is left-Lipschitz inverse of $\theta_f$ if and only if 	$\theta_f$ is right-Lipschitz inverse of $S$.
  \end{proposition}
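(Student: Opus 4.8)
The plan is to collapse all three conditions onto the single identity $S\theta_f = I_\mathcal{M}$, exploiting the operator description developed just above the proposition. First I would record what the hypotheses already supply for free. Since $\{f_n\}_n$ is an $\mathcal{M}_d$-frame for $\mathcal{M}$, conditions (i) and (ii) of Definition \ref{METRICBANACHFRAME} hold automatically; in particular the analysis map $\theta_f:\mathcal{M}\to\mathcal{M}_d$, $\theta_f x=\{f_n(x)\}_n$, is well-defined and satisfies $a\,d(x,y)\le\|\theta_f x-\theta_f y\|\le b\,d(x,y)$, so $\theta_f$ is bi-Lipschitz, hence in particular Lipschitz. Moreover $S$ is Lipschitz by hypothesis. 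Thus both of the Lipschitz requirements implicit in the notions of a left- and a right-Lipschitz inverse are already met, and the only remaining content is the algebraic identity on the relevant composite.

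Next I would unwind Definition \ref{METRICBANACHFRAME}. Granting that (i) and (ii) hold and that $S$ is Lipschitz, the pair $(\{f_n\}_n,S)$ is a metric frame if and only if condition (iii) holds, that is, $S(\{f_n(x)\}_n)=x$ for every $x\in\mathcal{M}$. Since $\{f_n(x)\}_n=\theta_f x$ by definition, this is precisely $S\theta_f x=x$ for all $x$, i.e.\ $S\theta_f=I_\mathcal{M}$, which is exactly the identity already observed to hold for metric frames in the discussion preceding the proposition.

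Finally I would translate the same identity into the inverse terminology. By definition $S$ is a left-Lipschitz inverse of $\theta_f$ precisely when $S$ is Lipschitz and $S\theta_f=I_\mathcal{M}$, and $\theta_f$ is a right-Lipschitz inverse of $S$ precisely when $\theta_f$ is Lipschitz and $S\theta_f=I_\mathcal{M}$. As noted, the Lipschitz clauses are automatic, so each of these two conditions is equivalent to $S\theta_f=I_\mathcal{M}$, and hence to $(\{f_n\}_n,S)$ being a metric frame. Chaining these equivalences yields the asserted triple equivalence.

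There is no genuine analytic obstacle: the statement is essentially a bookkeeping result whose whole force rests on the fact that, once $\{f_n\}_n$ is an $\mathcal{M}_d$-frame and $S$ is Lipschitz, every hypothesis except the composite identity comes for free. The one point demanding care is the terminology. Because $\theta_f$ maps into $\mathcal{M}_d$ while $S$ maps out of it, both ``$S$ is a left inverse of $\theta_f$'' and ``$\theta_f$ is a right inverse of $S$'' refer to the same composite $S\theta_f$ acting on $\mathcal{M}$, and not to $\theta_f S$ acting on $\mathcal{M}_d$; I would therefore state explicitly which composition each phrase denotes, so as not to conflate it with the idempotent $P_{f,S}=\theta_f S$ of the previous proposition.
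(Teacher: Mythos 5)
Your proof is correct and follows exactly the route the paper intends: the paper states this proposition without proof as an easy consequence of the operator description $S\theta_f = I_\mathcal{M}$, which is precisely the identity you reduce all three conditions to. Your closing remark distinguishing $S\theta_f$ from the idempotent $P_{f,S}=\theta_f S$ is a sensible clarification but does not constitute a departure from the paper's approach.
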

We now give some explicit examples of metric frames.
  \begin{example}
  	Let $\mathcal{M}$, $\{f_n\}_{n}$ be as in Example \ref{1FRAMEFIRST} and let $a=1$.	Take $\mathcal{M}_d \coloneqq \ell^1(\{0\}\cup \mathbb{N})$ and define 
  	\begin{align*}
  		S:\mathcal{M}_d \ni \{a_n\}_{n} \mapsto S \{a_n\}_{n} \coloneqq 1+\left| \sum_{n=1}^{\infty} a_n\right| \in \mathcal{M}.
  	\end{align*}
  	Then 
  	
  	\begin{align*}
  		|S \{a_n\}_{n}-S \{b_n\}_{n}|&=\left|| \sum_{n=1}^{\infty} a_n|-| \sum_{n=1}^{\infty} b_n| \right|\leq \left| \sum_{n=1}^{\infty} a_n- \sum_{n=1}^{\infty} b_n \right|\\
  		&=\left| \sum_{n=1}^{\infty} (a_n-b_n)\right|\leq \sum_{n=1}^{\infty} |a_n-b_n|\leq \sum_{n=0}^{\infty} |a_n-b_n|\\
  		&=\|\{a_n\}_{n}-\{b_n\}_{n}\|,\quad \forall \{a_n\}_{n}, \{b_n\}_{n} \in \ell^1(\{0\}\cup \mathbb{N}).
  	\end{align*}
  	Thus $S$ is Lipschitz. Further, 
  	\begin{align*}
  		S(\{f_n(x)\}_{n})=1+\left| \sum_{n=1}^{\infty} f_n(x)\right|=1+\sum_{n=1}^{\infty}\frac{(\log x)^n}{n!}=x,\quad \forall x \in \mathcal{M}. 
  	\end{align*}
  	Hence $(\{f_n\}_{n}, S)$ is a metric frame  for $\mathcal{M}$ w.r.t. $\mathcal{M}_d$.
  	Note that if we define \begin{align*}
  		T:\mathcal{M}_d \ni \{a_n\}_{n} \mapsto S \{a_n\}_{n} \coloneqq 1+ \sum_{n=1}^{\infty} |a_n| \in \mathcal{M},
  	\end{align*}
  	then 
  	\begin{align*}
  		|T \{a_n\}_{n}-T \{b_n\}_{n}|&=\left| \sum_{n=1}^{\infty} |a_n|- \sum_{n=1}^{\infty} |b_n| \right|= \left| \sum_{n=1}^{\infty} (|a_n|-|b_n|)  \right|\\
  		&\leq  \sum_{n=1}^{\infty} \bigg||a_n|-|b_n|\bigg|\leq \sum_{n=1}^{\infty} |a_n-b_n|\leq \sum_{n=0}^{\infty} |a_n-b_n|\\
  		&=\|\{a_n\}_{n}-\{b_n\}_{n}\|,\quad \forall \{a_n\}_{n}, \{b_n\}_{n} \in \ell^1(\{0\}\cup \mathbb{N}).
  	\end{align*}
  	Thus $T$ is Lipschitz. Hence  $(\{f_n\}_{n}, T)$ is also  a metric frame  for $\mathcal{M}$ w.r.t. $\mathcal{M}_d$.
  \end{example}

  \begin{example}
  	Let $f_1: \mathbb{K}\to  \mathbb{K}$ be bi-Lipschitz and let  $f_2, \dots, f_m: \mathbb{K}\to  \mathbb{K}$ be Lipschitz maps such that 
  	\begin{align*}
  		f_1(x)+\dots+ f_m(x)=x, \quad \forall x \in \mathbb{K}.
  	\end{align*}
  	We now define $S: \mathbb{K}^m \ni (x_1, \dots, x_m)\mapsto  \sum_{j=1}^{m}x_j \in \mathbb{K}$. Then $(\{f_n\}_{n}, S)$ is   a metric frame  for $\mathbb{K}$ w.r.t. $\mathbb{K}^m$. Note that the operator $S$ is linear. 	
  \end{example}
  After the definition of metric frame, the first question which comes is the
  existence. In Theorem \ref{BANACHFRAMEEXISTSSEPARABLE} it was proved that every separable Banach
  space admits a Banach frame w.r.t. a closed subspace of $\ell^\infty(\mathbb{N})$. Eventhough we don't know this in general, we derive two results one is  close to the definition of metric frame and another gives existence under certain assumptions. To do this
  we want a result which we derive from Mc-Shane extension theorem.
  \begin{theorem}\label{MCSHANE}(Mc-Shane extension theorem)\cite{WEAVER}
  	Let $\mathcal{M}$ be a metric space and $\mathcal{M}_0$  be a nonempty subset
  	of $\mathcal{M}$. If $f_0:\mathcal{M}_0 \rightarrow \mathbb{R} $ is Lipschitz,
  	then there exists a Lipschitz function $f:\mathcal{M} \rightarrow \mathbb{R}
  	$ such that $f|{\mathcal{M}_0}=f_0$ and
  	$\operatorname{Lip}(f)=\operatorname{Lip}(f_0)$.
  \end{theorem}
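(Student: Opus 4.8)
The plan is to exhibit an explicit extension via an infimal convolution, the classical McShane construction. Set $L \coloneqq \operatorname{Lip}(f_0)$ and define
\begin{align*}
	f(x) \coloneqq \inf_{y \in \mathcal{M}_0}\left(f_0(y) + L\, d(x,y)\right), \quad \forall x \in \mathcal{M}.
\end{align*}
The first thing I would verify is that this infimum is finite for every $x$, so that $f$ is a genuine real-valued function. Fixing any $y_0 \in \mathcal{M}_0$ and using the Lipschitz estimate $f_0(y) \geq f_0(y_0) - L\, d(y, y_0)$ together with the triangle inequality $d(y, y_0) \leq d(x, y) + d(x, y_0)$, one sees that each term $f_0(y) + L\, d(x,y)$ is bounded below by $f_0(y_0) - L\, d(x, y_0)$, a quantity independent of $y$. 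Hence the infimum is a well-defined real number.

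Next I would check that $f$ restricts to $f_0$ on $\mathcal{M}_0$. For $x \in \mathcal{M}_0$, choosing $y = x$ in the infimum gives $f(x) \leq f_0(x)$; conversely, the bound $f_0(y) \geq f_0(x) - L\, d(x, y)$ shows every term satisfies $f_0(y) + L\, d(x, y) \geq f_0(x)$, so $f(x) \geq f_0(x)$. Thus $f|_{\mathcal{M}_0} = f_0$.

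The main step is the Lipschitz bound $\operatorname{Lip}(f) \leq L$. For arbitrary $x_1, x_2 \in \mathcal{M}$ and any $y \in \mathcal{M}_0$, the triangle inequality gives $f_0(y) + L\, d(x_1, y) \leq f_0(y) + L\, d(x_2, y) + L\, d(x_1, x_2)$; taking the infimum over $y$ on both sides yields $f(x_1) \leq f(x_2) + L\, d(x_1, x_2)$, and swapping the roles of $x_1$ and $x_2$ gives $|f(x_1) - f(x_2)| \leq L\, d(x_1, x_2)$. Finally, since $f$ extends $f_0$, the supremum defining $\operatorname{Lip}(f)$ ranges over a larger set of pairs than the one defining $\operatorname{Lip}(f_0)$, so $\operatorname{Lip}(f) \geq \operatorname{Lip}(f_0) = L$; combined with the upper bound this forces $\operatorname{Lip}(f) = L$.

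I expect the only point requiring genuine care to be the finiteness of the infimum, since there is no boundedness hypothesis on $\mathcal{M}_0$ and one must rule out the value $-\infty$; the extension property and the two-sided Lipschitz estimate are then direct manipulations of the defining formula. It is also worth noting that the symmetric choice $\sup_{y \in \mathcal{M}_0}\left(f_0(y) - L\, d(x,y)\right)$ furnishes a second valid extension, so the extension is far from unique, but the infimal convolution above suffices for the statement.
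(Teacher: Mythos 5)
Your proof is correct: the infimal-convolution formula $f(x)=\inf_{y\in\mathcal{M}_0}\left(f_0(y)+L\,d(x,y)\right)$ is the classical McShane construction, and each of your steps (finiteness of the infimum via a fixed $y_0\in\mathcal{M}_0$, the extension property, the two-sided Lipschitz estimate, and the trivial inequality $\operatorname{Lip}(f)\geq\operatorname{Lip}(f_0)$) checks out. Note that the paper itself offers no proof of this theorem --- it is quoted from Weaver's book --- and your argument is precisely the standard one found in that source, so your write-up simply supplies a correct, self-contained proof of the cited result.
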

  Using Theorem \ref{MCSHANE} we derive the following.
  
  \begin{corollary}\label{MCSHANECORO}
  	If $(\mathcal{M}, 0)$ is a pointed metric space, then for every $x
  	\in \mathcal{M}$, there exists a Lipschitz function $f:\mathcal{M} \rightarrow
  	\mathbb{R}$ such that $f(x)=d(x,0)$, $f(0)=0$ and $\operatorname{Lip}(f)=1$.
  \end{corollary}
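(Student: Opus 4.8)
The plan is to apply the Mc-Shane extension theorem (Theorem \ref{MCSHANE}) to a function prescribed on a two-point subset. Fix $x \in \mathcal{M}$ and first dispose of the trivial case $x = 0$: here $d(x,0)=0$, so the requirements $f(x)=d(x,0)$ and $f(0)=0$ are simultaneously met by any $f$ vanishing at $0$, and in a nondegenerate pointed metric space one may take $f = d(\cdot,0)$, which has Lipschitz number $1$ by the reverse triangle inequality. Assuming now $x \neq 0$, set $\mathcal{M}_0 \coloneqq \{0, x\} \subseteq \mathcal{M}$ and define $f_0 : \mathcal{M}_0 \to \mathbb{R}$ by $f_0(0) \coloneqq 0$ and $f_0(x) \coloneqq d(x,0)$.

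Next I would check that $f_0$ is Lipschitz on $\mathcal{M}_0$ with $\operatorname{Lip}(f_0)=1$. Since $\mathcal{M}_0$ contains only the single nontrivial pair $\{0, x\}$, the Lipschitz number is computed directly as
\[
\operatorname{Lip}(f_0) = \frac{|f_0(x) - f_0(0)|}{d(x,0)} = \frac{d(x,0)}{d(x,0)} = 1.
\]

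Finally, Theorem \ref{MCSHANE} furnishes a Lipschitz extension $f : \mathcal{M} \to \mathbb{R}$ with $f|_{\mathcal{M}_0} = f_0$ and $\operatorname{Lip}(f) = \operatorname{Lip}(f_0) = 1$. Reading off the prescribed values gives $f(x) = f_0(x) = d(x,0)$ and $f(0) = f_0(0) = 0$, which is exactly the assertion. There is essentially no deep obstacle here, as the whole argument is a one-line invocation of Mc-Shane once the two-point datum is in place; the only point deserving care is the degenerate case $x = 0$ (and, relatedly, the one-point space, where no nontrivial pair exists and every function has Lipschitz number $0$ rather than $1$), which is why the statement is read for nondegenerate pointed metric spaces.
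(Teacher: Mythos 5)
Your proof is correct and follows essentially the same route as the paper: prescribe $f_0(0)=0$, $f_0(x)=d(x,0)$ on the two-point set $\{0,x\}$, note $\operatorname{Lip}(f_0)=1$, and invoke the Mc-Shane extension theorem, with the degenerate case $x=0$ treated separately. Your handling of that case (taking $f=d(\cdot,0)$ directly) is a harmless variant of the paper's device of re-running the main case on some $y\neq 0$, and your explicit remark about one-point spaces only makes the implicit nondegeneracy assumption visible.
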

  \begin{proof}
  	Case (i) : $x\neq0$.
  	Define $\mathcal{M}_0\coloneqq \{0,x\}$ and $f_0(0)=0$, $f_0(x)=d(x,0)$. Then
  	$|f_0(x)-f_0(0)|=d(x,0)$ and hence $f_0$ Lipschitz. Application of Theorem \ref{MCSHANE} now
  	finishes the proof.\\
  	Case (ii) : $x=0$.
  	Take a point $y
  	\in \mathcal{M}$ which is not $0$. We now run the argument in case (i) by
  	putting $y$ in the place of $x$.
  \end{proof}
  \begin{theorem}\label{METRICFRAMEEXISTS}
  	Let $ \mathcal{M}$ be a separable metric space.  Then there exist a BK-space $ \mathcal{M}_d$,  a sequence $\{f_n\}_{n}$  in $\operatorname{Lip}_0(\mathcal{M}, \mathbb{R})$
  	and a function $S: \mathcal{M}_d \rightarrow \mathcal{M}$ such that 
  	\begin{enumerate}[\upshape(i)]
  		\item $\{f_n(x)\}_{n} \in \mathcal{M}_d$, for each  $x \in \mathcal{M}$,
  		\item 
  		$
  		 \|\{f_n(x)-f_n(y)\}_n\|_{\mathcal{M}_d} \leq \, d(x,y), \forall x
  		, y\in \mathcal{M},
  		$
  		\item  $S(\{f_n(x)\}_{n})=x$, for each $x \in \mathcal{M}$.
  	\end{enumerate}
  \end{theorem}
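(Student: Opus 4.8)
The plan is to realize $\mathcal{M}$ isometrically inside a fixed sequence space by a Fr\'echet-type embedding and then to invert that embedding on its range. First I fix a base point $0 \in \mathcal{M}$ and, using separability, a countable dense subset $\{x_n\}_n \subseteq \mathcal{M}$. For each $n$ I define $f_n : \mathcal{M} \to \mathbb{R}$ by $f_n(z) := d(z, x_n) - d(x_n, 0)$. The reverse triangle inequality shows at once that each $f_n$ is $1$-Lipschitz, and $f_n(0) = d(0,x_n)-d(x_n,0) = 0$, so $f_n \in \operatorname{Lip}_0(\mathcal{M}, \mathbb{R})$; these are exactly the canonical $1$-Lipschitz functions attached to the points $x_n$ furnished by Corollary \ref{MCSHANECORO}.

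For the target space I take $\mathcal{M}_d := \ell^\infty(\mathbb{N})$, which is a BK-space. Condition (i) is immediate: again by the reverse triangle inequality $|f_n(x)| = |d(x, x_n) - d(x_n, 0)| \leq d(x, 0)$ for every $n$, so $\{f_n(x)\}_n \in \ell^\infty(\mathbb{N})$. For the upper estimate in (ii) I compute $\|\{f_n(x) - f_n(y)\}_n\|_\infty = \sup_n |d(x, x_n) - d(y, x_n)| \leq d(x,y)$, which is the required bound with constant $1$.

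The key step is to upgrade this bound to an equality, and this is exactly where density enters: given $x \neq y$, choosing a subsequence $x_{n_k} \to x$ gives $d(x,x_{n_k}) \to 0$ and $d(y,x_{n_k}) \to d(x,y)$, whence $|f_{n_k}(x) - f_{n_k}(y)| \to d(x,y)$. Therefore $\sup_n |f_n(x) - f_n(y)| = d(x,y)$, so the analysis map $\theta_f : x \mapsto \{f_n(x)\}_n$ is an isometric embedding of $\mathcal{M}$ into $\ell^\infty(\mathbb{N})$ and, in particular, is injective.

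Injectivity is precisely what makes condition (iii) available: I define $S$ on the range $\theta_f(\mathcal{M})$ by $S(\theta_f(x)) := x$ --- well defined exactly because $\theta_f$ is injective --- and extend $S$ to all of $\mathcal{M}_d$ arbitrarily, for instance by sending every point outside the range to the base point $0$. Then $S(\{f_n(x)\}_n) = x$ for every $x$, giving (iii). I expect the only genuine subtlety to be this well-definedness of $S$: unlike Definition \ref{METRICBANACHFRAME}, the present statement does not demand that $S$ be Lipschitz, so no continuity of the inverse is needed, and the whole difficulty collapses to guaranteeing that the chosen $f_n$ separate the points of $\mathcal{M}$. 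This is the reason for using the distance functions $d(\cdot,x_n)-d(x_n,0)$ rather than arbitrary Lipschitz extensions: the density of $\{x_n\}$ forces separation, whereas an uncontrolled extension peaking only at $x_n$ could fail to distinguish two points equidistant from the base point.
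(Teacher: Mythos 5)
Your proof is correct, and it takes a genuinely different --- and in one respect sounder --- route than the paper's. The paper applies its McShane-based Corollary \ref{MCSHANECORO} to each point of a countable dense set $\{x_n\}_n$ to obtain abstract norm-one functions with $f_n(x_n)=d(x_n,0)$, proves only the base-point identity $\sup_n|f_n(x)|=d(x,0)$, then defines $S_0(\{f_n(x)\}_n)\coloneqq x$ on the image and extends to $\ell^\infty(\mathbb{N})$. You instead take the explicit Fr\'echet--Kuratowski functions $f_n=d(\cdot,x_n)-d(x_n,0)$ (up to sign these are particular McShane extensions of the same two-point data) and upgrade the Bessel estimate to the full isometry $\sup_n|f_n(x)-f_n(y)|=d(x,y)$ via $x_{n_k}\to x$. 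This buys two things. First, you get the lower bound with constant $1$ for free, so your $\{f_n\}_n$ is in fact a metric $\ell^\infty(\mathbb{N})$-frame with bounds $a=b=1$, which is more than the statement asks. Second --- and this is the substantive point you correctly isolate --- injectivity of the analysis map is exactly what makes $S$ well defined on the range, and it does \emph{not} follow from the paper's identity $\sup_n|f_n(x)|=d(x,0)$ alone: arbitrary McShane extensions need not separate two points equidistant from the base point, so the well-definedness of the paper's $S_0$ is left unjustified, whereas your isometric embedding settles it outright. You are also right that, since the statement (unlike Definition \ref{METRICBANACHFRAME}) imposes no Lipschitz condition on $S$, an arbitrary extension off the range suffices; the paper's attempt at a Lipschitz extension is anyway shaky --- its displayed estimate $d(x,y)\leq\|\{f_n(x)\}_n\|+\|\{f_n(y)\}_n\|$ is not a Lipschitz bound, and Lipschitz extension of metric-space-valued maps is not available in general --- but that is more than the theorem requires. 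One cosmetic slip: your $f_n$ satisfies $f_n(x_n)=-d(x_n,0)$, so it is the negative of the function furnished by Corollary \ref{MCSHANECORO}; the sign is immaterial to every estimate.
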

  \begin{proof}
  	Let $\{x_n\}_{n}$ be a dense set in $ \mathcal{M}$.  Then for each $n \in \mathbb{N}$, from Corollary \ref{MCSHANECORO}
  	there exists a Lipschitz function $f_n:\mathcal{M} \rightarrow
  	\mathbb{R}$ such that $f_n(x_n)=d(x_n,0)$, $f_n(0)=0$ and
  	$\operatorname{Lip}(f_n)=1$. Let $ x \in \mathcal{M}$ be fixed. Now for each $n
  	\in\mathbb{N}$, 
  	\begin{align*}
  		|f_n(x)|=|f_n(x)-f_n(0)|\leq \|f_n\|_{\operatorname{Lip}_0}\, d (x,0)=d (x,0)
  	\end{align*}
  	which gives $\sup_{n \in\mathbb{N}}|f_n(x)|\leq d (x,0)$. Since $\{x_n\}_{n}$
  	is dense, there exists a subsequence $\{x_{n_k}\}_{k}$ of $\{x_n\}_{n}$ such
  	that $x_{n_k} \rightarrow x$ as $n \to \infty.$ From the inequality 
  	\begin{align*}
  		|d(y,z)-d(y,w)|\leq d(z,w), \quad \forall y,z,w \in \mathcal{M}
  	\end{align*}
  	we see then that $d(x_{n_k}, 0) \rightarrow d(x,0)$ as $n \to \infty.$ Consider 
  	\begin{align*}
  		d(x_{n_k}, 0)&=f_{n_k}(x_{n_k})\leq 
  		|f_{n_k}(x_{n_k})-f_{n_k}(x)|+|f_{n_k}(x)|\\
  		&\leq 1 .d(x_{n_k},
  		x)+|f_{n_k}(x)|,\quad \forall k \in\mathbb{N}\\
  		\implies &\lim_{k \to \infty}(d(x_{n_k}, 0)-d(x_{n_k},
  		x))\leq\sup_{k \in \mathbb{N} }(d(x_{n_k}, 0)-d(x_{n_k},
  		x)) \leq \sup_{k \in\mathbb{N}}|f_{n_k}(x)|.
  	\end{align*}
  	Therefore 
  	\begin{align*}
  		\sup_{n \in\mathbb{N}}|f_n(x)|&\leq d (x,0)=\lim_{k \to \infty}d(x_{n_k},
  		0)=\lim_{k \to \infty}(d(x_{n_k}, 0)-d(x_{n_k},x))\\
  		&\leq  \sup_{k
  			\in\mathbb{N}}|f_{n_k}(x)|\leq\sup_{n \in\mathbb{N}}|f_n(x)|.
  	\end{align*}
  	So we proved that 
  	\begin{align}\label{ALMOST}
  		d(x,0)=\sup_{n \in\mathbb{N}}|f_n(x)|, \quad \forall x \in \mathcal{M}.
  	\end{align}
  	Define 
  $
  		\mathcal{M}^0_d\coloneqq \{\{f_n(x)\}_n: x \in \mathcal{M}\}.
  	$
  	Equality (\ref{ALMOST}) then tells that $\mathcal{M}^0_d$ is a subset of $\ell^\infty(\mathbb{N}).$ Now we define
  	$S_0:\mathcal{M}_d^0 \ni \{f_n(x)\}_n \mapsto x \in \mathcal{M}$. Then from
  	Equality (\ref{ALMOST}),
  	\begin{align*}
  	d(S_0(\{f_n(x)\}_n),S_0(\{f_n(y)\}_n)&=d(x,y)
  	\leq d(x,0)+d(0,y)\\
  	&=\sup_{n \in\mathbb{N}}|f_n(x)|+\sup_{n \in\mathbb{N}}|f_n(y)|\\
  	&=\|\{f_n(x)\}_n\|+\|\{f_n(y)\}_n\|, \quad \forall x, y \in
  	\mathcal{M}.
  	\end{align*}
  	We will also have 
  	\begin{align*}
  	\|\{f_n(x)-f_n(y)\}_n\|_{\mathcal{M}_d}&=\sup_{n \in\mathbb{N}}|f_n(x)-f_n(y)| \\
  	&\leq \sup_{n \in\mathbb{N}}\|f_n\|_{\operatorname{Lip}_0}\, d (x,y)=d (x,y), \quad \forall  x
  	, y\in \mathcal{M}.
  	\end{align*}
  	  We can now take $S$ as Lipschitz extension 
  	of $S_0$ to $\ell^\infty(\mathbb{N})$ and $\mathcal{M}_d=\ell^\infty(\mathbb{N})$ which completes the proof. 
  	\end{proof}
\begin{theorem}\label{METRICFRAMEEXISTS2}
	If 	$A:\mathcal{M} \to \mathcal{M}_d$ is bi-Lipschitz and there is a Lipschitz projection $P:\mathcal{M}_d \to A(\mathcal{M})$, then $\mathcal{M}$ admits a metric frame w.r.t. $\mathcal{M}_d$.
\end{theorem}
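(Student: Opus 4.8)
The plan is to build the metric frame directly from the bi-Lipschitz embedding $A$ and the projection $P$, reusing the coordinate-functional construction that appears after Definition \ref{XDMETRICFRAME} to get the analysis functionals, and then using $P$ to manufacture the reconstruction operator. Write $c, d > 0$ for the bi-Lipschitz bounds of $A$, so that $c\, d(x,y) \le \|Ax - Ay\|_{\mathcal{M}_d} \le d\, d(x,y)$ for all $x, y \in \mathcal{M}$. Since $\mathcal{M}_d$ is a BK-space, each coordinate functional $p_n : \mathcal{M}_d \to \mathbb{K}$ is bounded linear, hence Lipschitz. First I would set $f_n \coloneqq p_n A$ for each $n$; each $f_n$ lies in $\operatorname{Lip}(\mathcal{M}, \mathbb{K})$ as a composition of a Lipschitz map with a bounded linear one, and by definition of the coordinate functionals $\{f_n(x)\}_n = Ax \in \mathcal{M}_d$, which is condition (i) of Definition \ref{METRICBANACHFRAME}. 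Condition (ii) is then immediate, since $\|\{f_n(x) - f_n(y)\}_n\|_{\mathcal{M}_d} = \|Ax - Ay\|_{\mathcal{M}_d}$, so the bi-Lipschitz bounds of $A$ yield (ii) with frame bounds $a = c$ and $b = d$.

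The substance of the theorem is producing the reconstruction operator $S$ from $P$, and for this I would set $S \coloneqq A^{-1} \circ P$. Two observations make this legitimate. First, the lower estimate $c\, d(x,y) \le \|Ax - Ay\|_{\mathcal{M}_d}$ forces $A$ to be injective, so $A$ is a bijection onto $A(\mathcal{M})$ and its inverse $A^{-1} : A(\mathcal{M}) \to \mathcal{M}$ satisfies $d(A^{-1}u, A^{-1}v) \le \tfrac{1}{c}\|u - v\|_{\mathcal{M}_d}$ for $u, v \in A(\mathcal{M})$; thus $A^{-1}$ is Lipschitz on the image. Second, $P$ maps all of $\mathcal{M}_d$ into $A(\mathcal{M})$ and fixes $A(\mathcal{M})$ pointwise, so $A^{-1}$ is only ever applied to points of $A(\mathcal{M})$ and $S$ is defined on all of $\mathcal{M}_d$. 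Being a composition of the Lipschitz maps $P$ and $A^{-1}$, the operator $S$ is Lipschitz with $\operatorname{Lip}(S) \le \operatorname{Lip}(P)/c$. Finally, for each $x \in \mathcal{M}$ we have $S(\{f_n(x)\}_n) = S(Ax) = A^{-1}(P(Ax)) = A^{-1}(Ax) = x$, because $Ax \in A(\mathcal{M})$ is fixed by $P$; this is condition (iii). Hence $(\{f_n\}_n, S)$ is a metric frame for $\mathcal{M}$ w.r.t. $\mathcal{M}_d$.

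There is no serious obstacle beyond bookkeeping; the only point deserving care is the meaning of \emph{Lipschitz projection} $P : \mathcal{M}_d \to A(\mathcal{M})$. I would make explicit at the outset that this denotes a Lipschitz retraction, i.e. $P|_{A(\mathcal{M})} = \operatorname{id}_{A(\mathcal{M})}$ (equivalently, a Lipschitz idempotent with range $A(\mathcal{M})$, which automatically fixes its range pointwise). That property is exactly what is invoked twice above: it guarantees both that $A^{-1}$ is applied only on $A(\mathcal{M})$, so that $S$ is well-defined on all of $\mathcal{M}_d$, and that $P(Ax) = Ax$, which drives the reconstruction identity $S\theta_f = I_\mathcal{M}$. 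Everything else is a direct transcription of the three frame conditions.
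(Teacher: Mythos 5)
Your proposal is correct and follows essentially the same route as the paper: the paper also sets $f_n \coloneqq h_n A$ with $\{h_n\}_n$ the coordinate functionals of $\mathcal{M}_d$ and takes $S \coloneqq A^{-1}P$, verifying the reconstruction identity $S(\{f_n(x)\}_n) = A^{-1}PAx = x$. Your write-up merely supplies details the paper leaves implicit --- the frame inequalities from the bi-Lipschitz bounds of $A$, the Lipschitz estimate $\operatorname{Lip}(S) \le \operatorname{Lip}(P)/c$, and the clarification that the projection fixes $A(\mathcal{M})$ pointwise --- all of which are sound.
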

\begin{proof}
	Let $\{h_n\}_n$ be the sequence of coordinate functionals associated with $\mathcal{M}_d$. 	Define $f_n\coloneqq h_nA$ and $S \coloneqq A^{-1}P$. Then 
	\begin{align*}
	S(\{f_n(x)\}_n)=A^{-1}P(\{h_n(Ax)\}_n)=A^{-1}PAx=A^{-1}Ax=x, \quad \forall x \in \mathcal{M}.
	\end{align*}
	Hence  $(\{f_n\}_{n}, S)$ is  a metric frame  for $\mathcal{M}$ w.r.t. $\mathcal{M}_d$.
\end{proof}
  It is well-known that Mc-Schane extension theorem fails for complex valued Lipschitz functions. Thus we may ask whether we can take a complex
  sequence space in Theorem \ref{METRICFRAMEEXISTS}. It is possible for certain metric spaces due to the following theorem. 
  \begin{theorem}(Kirszbraun extension theorem) \cite{VALENTINEL}
  	Let $\mathcal{H}$ be a Hilbert  space and $\mathcal{M}_0$  be a nonempty subset
  	of $\mathcal{H}$. If $f_0:\mathcal{M}_0 \rightarrow \mathbb{K} $ is Lipschitz,
  	then there exists a Lipschitz function $f:\mathcal{H} \rightarrow \mathbb{K}
  	$ such that $f|{\mathcal{M}_0}=f_0$ and
  	$\operatorname{Lip}(f)=\operatorname{Lip}(f_0)$.
  \end{theorem}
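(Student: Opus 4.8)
The plan is to follow the classical route to Kirszbraun's theorem: reduce the full extension problem to a one-point extension via Zorn's lemma, and then solve the one-point problem by a ball-intersection argument that exploits the inner-product structure of \emph{both} the domain $\mathcal{H}$ and the target, the latter being crucial since $\mathbb{K}$ (either $\mathbb{R}$ or $\mathbb{C}\cong\mathbb{R}^2$) is itself a finite-dimensional Hilbert space. Normalizing $L:=\operatorname{Lip}(f_0)$ to $1$, I would first form the poset of all pairs $(D,g)$ with $\mathcal{M}_0\subseteq D\subseteq\mathcal{H}$, $g:D\to\mathbb{K}$ Lipschitz, $g|_{\mathcal{M}_0}=f_0$, and $\operatorname{Lip}(g)\leq 1$, ordered by extension. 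Every chain has an upper bound (the union of graphs), so Zorn's lemma yields a maximal element $(D^\ast,g^\ast)$. If I can show that whenever $D^\ast\neq\mathcal{H}$ the pair extends to one further point without increasing the Lipschitz constant, maximality forces $D^\ast=\mathcal{H}$, and then $f:=g^\ast$ satisfies $\operatorname{Lip}(f)\leq 1=\operatorname{Lip}(f_0)$; the reverse inequality is automatic because $f$ extends $f_0$.

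The one-point extension reduces to the following: given $x_0\in\mathcal{H}\setminus D^\ast$ and the values $y_i:=g^\ast(x_i)$ at the points $x_i\in D^\ast$, I must produce $y\in\mathbb{K}$ with $\|y-y_i\|\leq\|x_0-x_i\|$ for every $i$, i.e. a common point of the closed balls $\overline{B}(y_i,\|x_0-x_i\|)$. Since $\mathbb{K}$ is finite-dimensional these balls are compact, so by the finite-intersection property it suffices to treat finitely many indices $x_1,\dots,x_m$. This finite case is Kirszbraun's lemma, and it is the heart of the matter.

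For the finite case I would argue by contradiction through a minimal-radius configuration. Let $\rho$ be the least scalar for which $\bigcap_i\overline{B}(y_i,\rho\,\|x_0-x_i\|)$ is nonempty; by compactness this is attained, and by strict convexity of $\mathbb{K}$ the intersection at $\rho$ is then a single point $y$, which lies in the convex hull of the active centres (those $y_i$ with $\|y-y_i\|=\rho\,\|x_0-x_i\|$), say $y=\sum_i\lambda_i y_i$ with $\lambda_i\geq 0$ and $\sum_i\lambda_i=1$. Expanding $0=\|\sum_i\lambda_i(y_i-y)\|^2$ via the polarization identity $2\langle a,b\rangle=\|a\|^2+\|b\|^2-\|a-b\|^2$ gives $\sum_i\lambda_i\|y_i-y\|^2=\tfrac12\sum_{i,j}\lambda_i\lambda_j\|y_i-y_j\|^2$, while the identical computation in $\mathcal{H}$ with $z:=\sum_i\lambda_i x_i$ gives $\tfrac12\sum_{i,j}\lambda_i\lambda_j\|x_i-x_j\|^2=\sum_i\lambda_i\|x_i-z\|^2\leq\sum_i\lambda_i\|x_0-x_i\|^2$. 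Substituting $\|y_i-y\|=\rho\,\|x_0-x_i\|$ together with the contraction bound $\|y_i-y_j\|\leq\|x_i-x_j\|$ then forces $\rho^2\leq 1$, contradicting the standing assumption $\rho>1$ that would hold if the balls did not already meet at $\rho=1$ (the degenerate case $x_0=x_i$ for some active $i$ is settled directly by taking $y=y_i$).

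I expect the genuine obstacle to be exactly this finite lemma, since it is where both inner-product structures are indispensable, and it is precisely the step with no analogue for a target carrying only an order or a non-Euclidean norm. This is what distinguishes the present situation from McShane's theorem, whose infimal construction relies on the order structure of $\mathbb{R}$ that $\mathbb{C}$ lacks, and it is the reason the argument succeeds for complex scalars on a Hilbert-space domain. The remaining ingredients—the Zorn reduction and the compactness passage from finite to arbitrary index sets—are routine once the lemma is established.
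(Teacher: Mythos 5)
The paper does not prove this statement at all: it is imported verbatim as a known result, with a citation to Valentine, and is used only to justify the remark that complex scalar sequence spaces can be used in Theorem \ref{METRICFRAMEEXISTS} when the metric space sits inside a Hilbert space. So there is no in-paper proof to compare against; what you have written is a self-contained reconstruction, and it is the correct classical one (Kirszbraun's original scheme, as in Valentine's paper): Zorn's lemma reduces to a one-point extension, the one-point extension is the nonemptiness of $\bigcap_i \overline{B}(y_i,\|x_0-x_i\|)$ in $\mathbb{K}$, compactness of balls in the finite-dimensional target plus the finite-intersection property reduces to finitely many balls, and the finite case is settled by the minimal-radius/barycenter computation. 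Your central identity $\sum_i\lambda_i\|y_i-y\|^2=\tfrac12\sum_{i,j}\lambda_i\lambda_j\|y_i-y_j\|^2$ and its counterpart in $\mathcal{H}$ with $z=\sum_i\lambda_i x_i$, combined with $\sum_i\lambda_i\|x_i-z\|^2\le\sum_i\lambda_i\|x_0-x_i\|^2$ and the contraction bound, do force $\rho\le 1$, and the conclusion $\sum_i\lambda_i\|x_0-x_i\|^2>0$ needed to divide is automatic since $x_0\notin D^\ast$ guarantees $x_0\neq x_i$ for every $i$ (so your ``degenerate case'' is in fact vacuous). Two asserted steps deserve explicit justification in a full write-up, though both are standard and fillable: that the minimal-$\rho$ intersection is a singleton (parallelogram law applied to two putative points, contradicting minimality, rather than strict convexity alone), and that $y$ lies in the convex hull of the active centres (separating-hyperplane argument: otherwise perturbing $y$ strictly decreases all active distances while keeping slack constraints slack, contradicting minimality of $\rho$). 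For $\mathbb{K}=\mathbb{C}$ your identification with $\mathbb{R}^2$ correctly sidesteps the real-part issue in the polarization identity, and the normalization $\operatorname{Lip}(f_0)=1$ needs the trivial remark that $\operatorname{Lip}(f_0)=0$ is handled by a constant extension. Your closing observation is also apt: the finite lemma is exactly where the Euclidean structure of the target enters, which is why the paper must invoke Kirszbraun here while McShane (Theorem \ref{MCSHANE}) suffices for real-valued functions on arbitrary metric spaces.
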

  
  Following proposition shows that given a metric frame, we can generate other metric frames.	
  \begin{proposition}
  	Let $(\{f_n\}_{n}, S)$ be a metric frame  for $\mathcal{M}$ w.r.t. $\mathcal{M}_d$. If maps $A, B :\mathcal{M} \to \mathcal{M}$  are such that $A$ is 
  	bi-Lipschitz, $B$ is Lipschitz and $BA=I_\mathcal{M}$, then $(\{f_nA\}_{n}, BS)$ is a metric frame  for $\mathcal{M}$ w.r.t. $\mathcal{M}_d$. In particular, if $A :\mathcal{M} \to \mathcal{M}$   is 
  	bi-Lipschitz invertible, then $(\{f_nA\}_{n}, A^{-1}S)$ is a metric frame  for $\mathcal{M}$ w.r.t. $\mathcal{M}_d$.
  \end{proposition}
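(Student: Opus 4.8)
The plan is to verify directly that the pair $(\{f_nA\}_n, BS)$ satisfies the three conditions of Definition \ref{METRICBANACHFRAME}. Write $g_n \coloneqq f_nA$ and let $a, b$ denote the frame bounds of the given metric frame $(\{f_n\}_n, S)$. Since $A$ is bi-Lipschitz, I would first fix constants $c, e > 0$ with $c\, d(x,y) \leq d(Ax, Ay) \leq e\, d(x,y)$ for all $x, y \in \mathcal{M}$. Condition (i) is then immediate: for each $x \in \mathcal{M}$ the point $Ax$ again lies in $\mathcal{M}$, so $\{g_n(x)\}_n = \{f_n(Ax)\}_n \in \mathcal{M}_d$ by condition (i) for $(\{f_n\}_n, S)$.

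For condition (ii) the idea is to substitute $Ax$ and $Ay$ into the frame inequality for $\{f_n\}_n$ and then sandwich $d(Ax, Ay)$ between the bi-Lipschitz bounds of $A$. Since $g_n(x) - g_n(y) = f_n(Ax) - f_n(Ay)$, the chain
\begin{align*}
ac\, d(x,y) \leq a\, d(Ax, Ay) \leq \|\{f_n(Ax) - f_n(Ay)\}_n\|_{\mathcal{M}_d} \leq b\, d(Ax, Ay) \leq be\, d(x,y)
\end{align*}
shows that $\{g_n\}_n$ obeys the two-sided estimate with new lower bound $ac$ and upper bound $be$.

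For condition (iii), the map $BS$ is Lipschitz as a composition of the Lipschitz maps $S$ (from the given frame) and $B$ (by hypothesis). For the reconstruction identity I would chain the reconstruction property of $S$ evaluated at the point $Ax$ with the relation $BA = I_{\mathcal{M}}$:
\begin{align*}
(BS)(\{g_n(x)\}_n) = B\bigl(S(\{f_n(Ax)\}_n)\bigr) = B(Ax) = (BA)(x) = x, \quad \forall x \in \mathcal{M}.
\end{align*}
This establishes all three conditions, so $(\{f_nA\}_n, BS)$ is a metric frame for $\mathcal{M}$ w.r.t. $\mathcal{M}_d$.

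Finally, the ``in particular'' clause follows by specializing $B \coloneqq A^{-1}$: when $A$ is bi-Lipschitz invertible, $A^{-1}$ is itself Lipschitz and satisfies $A^{-1}A = I_{\mathcal{M}}$, so the hypotheses of the first part are met and $(\{f_nA\}_n, A^{-1}S)$ is a metric frame. There is no genuine obstacle here; the statement is essentially a composition bookkeeping. The only points requiring care are tracking which products of constants yield the new frame bounds in (ii), and observing that the reconstruction step in (iii) invokes the original identity $S(\{f_n(z)\}_n)=z$ at the point $z = Ax$ rather than at $x$ itself.
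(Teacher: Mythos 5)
Your proof is correct and follows essentially the same route as the paper's: bi-Lipschitzness of $A$ yields condition (ii) via the substitution $x \mapsto Ax$, and the identity $BA = I_\mathcal{M}$ gives the reconstruction $BS(\{f_n(Ax)\}_n) = BAx = x$. You simply spell out the details (the explicit new bounds $ac$, $be$, condition (i), and the Lipschitzness of $BS$ as a composition) that the paper's two-line proof leaves implicit.
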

  \begin{proof}
  	Bi-Lipschitzness of $A$ tells that (ii) condition in Definition \ref{METRICBANACHFRAME} holds. Now by using $BA=I_\mathcal{M}$ we get $BS (\{f_nAx\}_{n})=BAx=x, \forall x \in \mathcal{M}.$
  \end{proof}
  Previous proposition not only helps to generate metric frames from metric frames but also from Banach frames. Since there are  large number of examples of Banach frames for a variety of Banach spaces, just by operating with  bi-Lipschitz invertible functions on subsets of it produces metric frames for that subset. Next we  characterize metric frames using Lipschitz functions. 
  \begin{theorem}\label{CHARLIPMETRIC}
  	Let $\{f_n\}_{n}$ be a metric $\mathcal{M}_d$-frame  for $\mathcal{M}$. Then the following are equivalent. 
  	\begin{enumerate}[\upshape(i)]
  		\item There exists a Lipschitz projection  $P:\mathcal{M}_d \to \theta_f(\mathcal{M})$. 
  		\item There exists a Lipschitz map $V:\mathcal{M}_d \to \mathcal{M}$ such that $V|_{\theta_f(\mathcal{M})}=\theta_f^{-1}$.
  		\item There exists a Lipschitz map $S:\mathcal{M}_d \to \mathcal{M}$ such that $(\{f_n\}_{n}, S)$ is  a metric frame  for $\mathcal{M}$ w.r.t. $\mathcal{M}_d$.
  	\end{enumerate}
  \end{theorem}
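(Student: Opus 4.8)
The plan is to prove the three statements equivalent by exhibiting the reconstruction map, the extension $V$, and the projection $P$ as simple compositions of one another through the analysis map $\theta_f$. The structural fact I would lean on throughout is already recorded in the discussion following Definition \ref{METRICBANACHFRAME}: the bounds in condition (ii) of an $\mathcal{M}_d$-frame make $\theta_f:\mathcal{M}\to\mathcal{M}_d$ bi-Lipschitz and injective. Hence $\theta_f:\mathcal{M}\to\theta_f(\mathcal{M})$ is a bijection onto its image and $\theta_f^{-1}:\theta_f(\mathcal{M})\to\mathcal{M}$ is a well-defined Lipschitz map (with constant at most $1/a$, coming from the lower bound $a$). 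Here I read ``Lipschitz projection onto $\theta_f(\mathcal{M})$'' as a Lipschitz retraction, i.e. a Lipschitz map $P:\mathcal{M}_d\to\theta_f(\mathcal{M})$ with $P(z)=z$ for every $z\in\theta_f(\mathcal{M})$.

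First I would show $(i)\Rightarrow(ii)$: given such a $P$, set $V\coloneqq\theta_f^{-1}P:\mathcal{M}_d\to\mathcal{M}$. As a composition of the Lipschitz map $P$ (whose range lies in $\theta_f(\mathcal{M})$) with the Lipschitz map $\theta_f^{-1}$, the map $V$ is Lipschitz; and for $z\in\theta_f(\mathcal{M})$ we have $V(z)=\theta_f^{-1}(P(z))=\theta_f^{-1}(z)$ because $P$ fixes $\theta_f(\mathcal{M})$, so $V|_{\theta_f(\mathcal{M})}=\theta_f^{-1}$. Next, $(ii)\Rightarrow(iii)$ is essentially a renaming: take $S\coloneqq V$. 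Then $S$ is Lipschitz and, for every $x\in\mathcal{M}$, $S(\{f_n(x)\}_n)=V(\theta_f x)=\theta_f^{-1}(\theta_f x)=x$, which is exactly condition (iii) of Definition \ref{METRICBANACHFRAME}; conditions (i) and (ii) there hold because $\{f_n\}_n$ is already an $\mathcal{M}_d$-frame. Hence $(\{f_n\}_n,S)$ is a metric frame.

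To close the cycle I would prove $(iii)\Rightarrow(i)$: given the reconstruction map $S$, define $P\coloneqq\theta_f S:\mathcal{M}_d\to\theta_f(\mathcal{M})$. It is Lipschitz as a composition of $S$ and $\theta_f$, and its range lies in $\theta_f(\mathcal{M})$. For $z\in\theta_f(\mathcal{M})$, writing $z=\theta_f x$, condition (iii) gives $S(z)=S(\theta_f x)=x$, whence $P(z)=\theta_f(S(z))=\theta_f x=z$; thus $P$ is a Lipschitz projection onto $\theta_f(\mathcal{M})$. (Equivalently one can record directly that $(iii)\Rightarrow(ii)$ with $V=S$, since $S\theta_f=I_\mathcal{M}$ forces $S|_{\theta_f(\mathcal{M})}=\theta_f^{-1}$.)

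There is no genuinely hard step here: every implication is a one-line composition. The only points demanding care are bookkeeping ones, namely checking that each composition has matching domain and codomain (in particular that $P$ really lands in $\theta_f(\mathcal{M})$, so that $\theta_f^{-1}P$ makes sense) and pinning down the intended meaning of ``Lipschitz projection'' as a retraction fixing $\theta_f(\mathcal{M})$ pointwise. Once the bi-Lipschitz bijectivity of $\theta_f$ onto its image is in hand, the equivalences follow mechanically.
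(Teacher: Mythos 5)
Your proposal is correct and follows essentially the same route as the paper: the same compositions $V=\theta_f^{-1}P$, $S=V$, and $P=\theta_f S$ (the paper uses $P=\theta_f V$, which is the same map), resting on the bi-Lipschitz injectivity of $\theta_f$. The only cosmetic differences are that you close a cycle (i)$\Rightarrow$(ii)$\Rightarrow$(iii)$\Rightarrow$(i) instead of proving the pairwise equivalences, and you verify that $P$ fixes $\theta_f(\mathcal{M})$ pointwise where the paper checks idempotency $P^2=P$ --- your retraction reading is in fact the one the paper's own (i)$\Rightarrow$(ii) step implicitly uses, so your care on that point is well placed.
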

  \begin{proof}
  	(i)	$\Rightarrow$ (ii) Define $V\coloneqq \theta_f^{-1} P$. Then for $y=\theta_f(x),  x \in \mathcal{M}$ we get  $Vy=V\theta_f(x)=\theta_f^{-1} P\theta_f(x)=\theta_f^{-1}\theta_f(x)=\theta_f^{-1} y$.\\
  	(ii)	$\Rightarrow$ (i) Set $P\coloneqq \theta_fV$. Now $P^2=\theta_fV\theta_fV=\theta_fI_\mathcal{M}V=P$.\\
  	(ii)	$\Rightarrow$ (iii) Define $S\coloneqq V$. Then $S\{f_n(x)\}_n=S\theta_f (x)=V\theta_f (x)=x$, for all $x \in \mathcal{M}$. Hence  $(\{f_n\}_{n}, S)$ is  a metric frame  for $\mathcal{M}$ w.r.t. $\mathcal{M}_d$.\\
  	(iii)	$\Rightarrow$ (ii) Define $V\coloneqq S$. Then $V\theta_f (x)=S\theta_f (x)=S\{f_n(x)\}_n=x$, for all $x \in \mathcal{M}$.
  \end{proof}
  Now we turn onto the representation of elements using metric frames. Naturally, to deal with sums we  must look in Banach space structure. Following theorem can be compared with Theorem \ref{CASAZZASEPARABLECHARACTERIZATION}. 
  
  \begin{theorem}\label{PALL}
  	Let $\{f_n\}_{n}$ be a metric p-frame  for a Banach space $\mathcal{X}$. Assume that $f_n(0)=0$ for all $n$. Then the following are equivalent. 
  	\begin{enumerate}[\upshape(i)]
  		\item There exists a bounded linear map $V:\mathcal{M}_d \to \mathcal{X}$ such that $V|_{\theta_f(\mathcal{M})}=\theta_f^{-1}$.
  		\item There exists a bounded linear  map $S:\mathcal{M}_d \to \mathcal{X}$ such that $(\{f_n\}_{n}, S)$ is  a metric p-frame  for $\mathcal{X}$. 
  		\item There exists a sequence $\{\tau_n\}_{n}$ in  $\mathcal{X}$ such that $\sum_{n=1}^{\infty}c_n\tau_n$ converges for all $\{c_n\}_{n}\in \ell^p(\mathbb{N})$ and 
  	$
  			x=\sum_{n=1}^{\infty}f_n(x)\tau_n,  \forall x \in \mathcal{X}.
  		$
  		\item There exists a  q-Bessel sequence $\{\tau_n\}_{n}$ in  $\mathcal{X}\subseteq \mathcal{X}^{**}$ such that 
  	$	x=\sum_{n=1}^{\infty}f_n(x)\tau_n,  \forall x \in \mathcal{X}.
  		$
  		\item   There exists a  q-Bessel sequence $\{\tau_n\}_{n}$ in  $\mathcal{X}\subseteq \mathcal{X}^{**}$ such that 
  	$
  			f=\sum_{n=1}^{\infty}f(\tau_n)f_n,  \forall f \in \mathcal{X}^*.
  		$
  	\end{enumerate}	
  	In each of the cases (iv) and (v), $\{\tau_n\}_n$ is actually a q-frame for $\mathcal{X}^*$. 	 
  	\end{theorem}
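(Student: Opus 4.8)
The plan is to transpose the proof of Theorem \ref{CASAZZASEPARABLECHARACTERIZATION} into the Lipschitz setting, the whole point being that although the analysis map $\theta_f$ is only bi-Lipschitz and genuinely nonlinear (the $f_n$ need not be linear), the reconstructing objects $V$, $S$ and the synthesis map attached to $\{\tau_n\}_n$ are required to be \emph{linear}. First I would pin down the ambient sequence space: since $f_n(0)=0$, the upper $p$-frame inequality taken with $y=0$ gives $\big(\sum_n|f_n(x)|^p\big)^{1/p}\le b\|x\|$, so $\{f_n(x)\}_n\in\ell^p(\mathbb{N})$ for every $x$ and $\theta_f:\mathcal{X}\to\ell^p(\mathbb{N})$ is well defined; throughout we thus take $\mathcal{M}_d=\ell^p(\mathbb{N})$. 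I would then run the cycle (i) $\Leftrightarrow$ (ii) $\Leftrightarrow$ (iii) $\Leftrightarrow$ (iv) $\Leftrightarrow$ (v) and close with the $q$-frame assertion.

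The equivalence (i) $\Leftrightarrow$ (ii) is immediate: a bounded linear $V$ with $V|_{\theta_f(\mathcal{X})}=\theta_f^{-1}$ is exactly a bounded linear $S$ with $S\theta_f=I_{\mathcal{X}}$, that is $S(\{f_n(x)\}_n)=x$, which is the reconstruction requirement of a metric frame in Definition \ref{METRICBANACHFRAME}. For (i)/(ii) $\Leftrightarrow$ (iii) I would route everything through the canonical basis $\{e_n\}_n$ of $\ell^p(\mathbb{N})$: given the linear $V$, put $\tau_n\coloneqq Ve_n$; since $\{e_n\}_n$ is a Schauder basis, boundedness of $V$ gives $V\{c_n\}_n=\sum_n c_n\tau_n$ for every $\{c_n\}_n\in\ell^p(\mathbb{N})$, and then $x=V\theta_f x=\sum_n f_n(x)\tau_n$. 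Conversely, given $\{\tau_n\}_n$ as in (iii), the partial-sum operators are bounded and converge pointwise, so the uniform boundedness principle makes $U\{c_n\}_n\coloneqq\sum_n c_n\tau_n$ a bounded linear map with $U\theta_f=I_{\mathcal{X}}$, which is (i).

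For (iii) $\Leftrightarrow$ (iv) I would invoke Theorem \ref{pFRAMECHAR}(i) with the space and indices dualised. The adjoint of the synthesis map $U:\ell^p(\mathbb{N})\to\mathcal{X}$ sends $f\in\mathcal{X}^*$ to $\{f(\tau_n)\}_n$, so $\|U\|=\|U^*\|$ shows $\{\tau_n\}_n$ is automatically $q$-Bessel for $\mathcal{X}^*$, giving (iv). Conversely, if $\{\tau_n\}_n$ is $q$-Bessel then Theorem \ref{pFRAMECHAR}(i) makes $\{c_n\}_n\mapsto\sum_n c_n\tau_n$ a bounded map into $(\mathcal{X}^*)^*=\mathcal{X}^{**}$; because the partial sums lie in $\mathcal{X}$, which is closed in $\mathcal{X}^{**}$, the series converges in $\mathcal{X}$, recovering (iii). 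The passage (iv) $\Leftrightarrow$ (v) is pure duality: applying a continuous $f\in\mathcal{X}^*$ to $x=\sum_n f_n(x)\tau_n$ and interchanging (legitimate since $f$ is continuous and the series converges in $\mathcal{X}$) yields $f(x)=\sum_n f(\tau_n)f_n(x)$, i.e.\ (v); conversely, given (v), the $q$-Bessel property guarantees that $z\coloneqq\sum_n f_n(x)\tau_n$ converges in $\mathcal{X}$, and $f(z)=\sum_n f(\tau_n)f_n(x)=f(x)$ for every $f\in\mathcal{X}^*$ forces $z=x$ since $\mathcal{X}^*$ separates points.

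Finally, for the assertion that $\{\tau_n\}_n$ is a $q$-frame for $\mathcal{X}^*$ the upper (Bessel) bound is already in hand, so only the lower bound is left, and this is where I expect the one genuinely new idea. Starting from (v) and using the \emph{linearity} of $f$ (so that $f(x)-f(y)=f(x-y)$) together with H\"older and the upper $p$-frame bound, I would estimate, for all $x,y\in\mathcal{X}$,
\begin{align*}
|f(x)-f(y)| &=\Big|\sum_n f(\tau_n)\big(f_n(x)-f_n(y)\big)\Big|\\
&\le\Big(\sum_n|f(\tau_n)|^q\Big)^{1/q}\Big(\sum_n|f_n(x)-f_n(y)|^p\Big)^{1/p}\\
&\le b\,\Big(\sum_n|f(\tau_n)|^q\Big)^{1/q}\,\|x-y\|,
\end{align*}
whence $\|f\|\le b\big(\sum_n|f(\tau_n)|^q\big)^{1/q}$, which is precisely the lower $q$-frame inequality with constant $1/b$. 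The main obstacle is conceptual rather than computational: keeping straight which maps are merely Lipschitz (the $f_n$ and $\theta_f$) and which must be linear (the reconstruction $V=S=U$ and the functionals $f$), and exploiting linearity of $f$ at exactly the right moment to turn a Lipschitz/H\"older estimate into an operator-norm bound; the remaining care is the routine $\mathcal{X}$-versus-$\mathcal{X}^{**}$ convergence bookkeeping.
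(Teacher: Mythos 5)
Your proposal is correct and takes essentially the same route as the paper: the identical cycle (i) $\Leftrightarrow$ (ii) $\Leftrightarrow$ (iii) $\Leftrightarrow$ (iv) $\Leftrightarrow$ (v) with $\mathcal{M}_d=\ell^p(\mathbb{N})$, $\tau_n\coloneqq Ve_n$ and Banach--Steinhaus for (i)/(ii) $\Leftrightarrow$ (iii), the equivalence of the $q$-Bessel property with boundedness of the synthesis map into $\mathcal{X}^{**}$ plus norm-closedness of $\mathcal{X}$ for (iii) $\Leftrightarrow$ (iv), and H\"older combined with the upper $p$-frame bound (exploiting $f_n(0)=0$ and the linearity of $f$) for the final lower $q$-frame inequality. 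The only deviation is cosmetic: in (iv) $\Rightarrow$ (v) you obtain $f=\sum_{n=1}^{\infty}f(\tau_n)f_n$ only pointwise, whereas the paper proves convergence in the $\|\cdot\|_{\operatorname{Lip}_0}$-norm, but your own tail estimate already gives $\left\|f-\sum_{k=1}^{n}f(\tau_k)f_k\right\|_{\operatorname{Lip}_0}\leq b\left(\sum_{k=n+1}^{\infty}|f(\tau_k)|^q\right)^{\frac{1}{q}}\to 0$, so the upgrade is immediate.
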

  \begin{proof}
  	Proof of (i)	$\iff$ (ii) is similar to the proof of (ii)	$\iff$ (iii) in Theorem \ref{CHARLIPMETRIC}.\\
  	(iii)	$\Rightarrow$ (i) Given information tells that the map 
  	\begin{align*}
  		V: \ell^p(\mathbb{N}) \ni \{c_n\}_{n} \to \sum_{n=1}^{\infty}c_n\tau_n \in \mathcal{X}
  	\end{align*}
  	is well-defined. Banach-Steinhaus theorem now asserts that $V$ is bounded. Now for $y=\theta_f(x), $ $ x \in \mathcal{X}$ we get 
  	\begin{align*}
  		Vy=V\theta_f(x)=V(\{f_n(x)\}_{n})=\sum_{n=1}^{\infty}f_n(x)\tau_n=x=\theta_f^{-1} \theta_f(x)=\theta_f^{-1} y.
  	\end{align*} 
  	(i)	$\Rightarrow$ (iii) Let  $\{e_n\}_{n}$ be the standard Schauder basis for $\ell^p(\mathbb{N})$ and define $\tau_n \coloneqq Ve_n$, for all $n$. Since $V$ is bounded linear and $\sum_{n=1}^{\infty}c_ne_n$ converges for all $\{c_n\}_{n}\in \ell^p(\mathbb{N})$, it follows that $\sum_{n=1}^{\infty}c_n\tau_n$ converges for all $\{c_n\}_{n}\in \ell^p(\mathbb{N})$. Moreover, 
  	\begin{align*}
  		x=V\theta_f(x)=V(\{f_n(x)\}_{n})=\sum_{n=1}^{\infty}f_n(x)\tau_n, \quad \forall x \in \mathcal{X}.
  	\end{align*}
  	(iii)	$\iff$ (iv) By considering $\tau_n$ in $\mathcal{X}^{**}$ through James embedding and using Theorem \ref{CASAZZASEPARABLECHARACTERIZATION} we get that  $\{\tau_n\}_{n}$ is a q-Bessel sequence  in  $\mathcal{X}$ if and only if $\sum_{n=1}^{\infty}c_n\tau_n$ converges for all $\{c_n\}_{n}\in \ell^p(\mathbb{N})$. \\
  	(iv)	$\Rightarrow$ (v) Let $b$ be a Bessel bound for $\{\tau_n\}_{n}$. Then for all $f \in \mathcal{X}^*$ and $n\in \mathbb{N}$,
  	\begin{align*}
  	\left\|f-\sum_{k=1}^{n}f(\tau_k)f_k\right\|_{\operatorname{Lip}_0}&=\sup_{x, y \in \mathcal{X}, x\neq y} \frac{\left|\left(f-\sum_{k=1}^{n}f(\tau_k)f_k\right)(x)-\left(f-\sum_{k=1}^{n}f(\tau_k)f_k\right)(y)\right|}{\|x-y\|}\\
  	&=\sup_{x, y \in \mathcal{X}, x\neq y} \frac{\left|f\left(\sum_{k=1}^{\infty}f_k(x)\tau_k\right)-f\left(\sum_{k=1}^{\infty}f_k(y)\tau_k\right)-\sum_{k=1}^{\infty}f(\tau_k)(f_k(x)-f_k(y))\right|}{\|x-y\|}\\
  	&=\sup_{x, y \in \mathcal{X}, x\neq y} \frac{\left|\sum_{k=1}^{n}f(\tau_k)(f_k(x)-f_k(y))-\sum_{k=1}^{\infty}f(\tau_k)(f_k(x)-f_k(y))\right|}{\|x-y\|}\\
  	&=\sup_{x, y \in \mathcal{X}, x\neq y} \frac{\left|\sum_{k=n+1}^{\infty}f(\tau_k)(f_k(x)-f_k(y))\right|}{\|x-y\|}\\
  	&\leq \sup_{x, y \in \mathcal{X}, x\neq y} \frac{\left(\sum_{k=n+1}^{\infty}|f(\tau_k)|^q\right)^\frac{1}{q}\left(\sum_{k=n+1}^{\infty}|f_k(x)-f_k(y)|^p\right)^\frac{1}{p}}{\|x-y\|}\\
  	&\leq b \left(\sum_{k=n+1}^{\infty}|f_k(x)-f_k(y)|^p\right)^\frac{1}{p} \to 0 \text{ as } n \to \infty.
  		\end{align*}
  	(v)	$\Rightarrow$ (iv) Let $b$ be as in the last setting. Now given $x \in \mathcal{X}$ and $n\in \mathbb{N}$,
  	
  	\begin{align*}
  		\left\|x-\sum_{k=1}^{n}f_k(x)\tau_k\right\|&=\sup_{f\in \mathcal{X}^*, \|f\|=1} 	\left|f(x)-\sum_{k=1}^{n}f_k(x)f(\tau_k)\right|\\
  		&=\sup_{f\in \mathcal{X}^*, \|f\|=1} 	\left|\left(\sum_{k=1}^{\infty}f(\tau_k)f_k\right)(x)-\sum_{k=1}^{n}f_k(x)f(\tau_k)\right|\\
  		&=\sup_{f\in \mathcal{X}^*, \|f\|=1} 	\left|\sum_{k=n+1}^{\infty}f_k(x)f(\tau_k)\right|\\
  		&\leq \left(\sum_{k=n+1}^{\infty}|f(\tau_k)|^q\right)^\frac{1}{q}\left(\sum_{k=n+1}^{\infty}|f_k(x)-f_k(0)|^p\right)^\frac{1}{p}\\
  		&\leq b \left(\sum_{k=n+1}^{\infty}|f_k(x)|^p\right)^\frac{1}{p} \to 0 \text{ as } n \to \infty.
  	\end{align*}
  	Now we left with proving  that $\{\tau_n\}_{n}$ is a q-frame for $\mathcal{X}$. Assume (iv). Let $f \in \mathcal{X}^*.$ Then 
  	\begin{align*}
  	\|f\|&=\sup_{x\in \mathcal{X}, \|x\|=1} \left|f(x)\right|=\sup_{x\in \mathcal{X}, \|x\|=1} \left|f\left(\sum_{n=1}^{\infty}f_n(x)\tau_n\right)\right|\\
  	&=\sup_{x\in \mathcal{X}, \|x\|=1} \left|\sum_{n=1}^{\infty}f_n(x)f(\tau_n)\right|\leq b\left(\sum_{n=1}^{\infty}|f_n(x)|^p\right)^\frac{1}{p} .\\
  	\end{align*}
  	Since $f$ was arbitrary, the conclusion follows.
  	
  \end{proof}
  Theorem \ref{CASAZZASEPARABLECHARACTERIZATION} and Theorem \ref{PALL} suggest the following problem. 
  For which metric spaces and BK-spaces, does Theorem \ref{PALL} hold?  We next present a result which demands only reconstruction of elements using Lipschitz functions on Banach space and not frame conditions. First we record a result for this purpose. 
  \begin{lemma}
  	\cite{CASAZZACHRISTENSENSTOEVA}\label{ANOTHER}
  		Given  a Banach space $\mathcal{X}$ and a sequence $\{\tau_n\}_{n}$ of non-zero elements in $\mathcal{X}$, let 
  		\begin{align*}
  		\mathcal{Y}_d \coloneqq \left\{\{a_n\}_{n}:\sum_{n=1}^\infty a_n \tau_n \text{ converges in }   \mathcal{X}\right\}. 
  		\end{align*}
  		Then $\mathcal{Y}_d$ is a Banach space w.r.t. the norm 
  		\begin{align*}
  		\| \{a_n\}_{n}\|\coloneqq \sup_{m }\left\|\sum_{n=1}^m a_n \tau_n\right\|.
  		\end{align*}
  		Further, the canonical unit vectors form a Schauder basis for $\mathcal{Y}_d$.
  	
  \end{lemma}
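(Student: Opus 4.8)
The plan is to verify the four required properties in order: that $\mathcal{Y}_d$ is a linear space, that $\|\cdot\|$ is a genuine norm, that it is complete, and finally that the canonical unit vectors form a Schauder basis. The first two are routine. Linearity of $\mathcal{Y}_d$ is immediate, since the sequences $\{a_n\}_n$ for which $\sum a_n\tau_n$ converges are closed under addition and scalar multiplication (convergent series in $\mathcal{X}$ add and scale). For the norm, finiteness holds because a convergent series has bounded partial sums, so $\sup_m\|\sum_{n=1}^m a_n\tau_n\|<\infty$; homogeneity and the triangle inequality pass through the supremum from the norm on $\mathcal{X}$. The one point where the hypothesis $\tau_n\neq 0$ is essential is positive-definiteness: if $\|\{a_n\}_n\|=0$ then every partial sum $S_m$ vanishes, so $a_n\tau_n=S_n-S_{n-1}=0$, and $\tau_n\neq 0$ forces $a_n=0$.

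The main work, and the step I expect to be the main obstacle, is completeness. First I would record that the coordinate functionals are bounded: from $a_n\tau_n=S_n-S_{n-1}$ one gets $|a_n|\,\|\tau_n\|\leq 2\|\{a_m\}_m\|$, so each coordinate is controlled by the $\mathcal{Y}_d$-norm (again using $\tau_n\neq 0$). Hence a Cauchy sequence $\{x^{(k)}\}_k$, with $x^{(k)}=\{a_n^{(k)}\}_n$, is coordinatewise Cauchy, and I would set $a_n:=\lim_k a_n^{(k)}$. The delicate part is to show $\{a_n\}_n\in\mathcal{Y}_d$ and $x^{(k)}\to\{a_n\}_n$. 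Given $\epsilon$, the Cauchy condition yields $\|\sum_{n=1}^m(a_n^{(k)}-a_n^{(j)})\tau_n\|<\epsilon$ uniformly in $m$ for $k,j\geq N$; letting $j\to\infty$ and using coordinatewise convergence (the sums are finite) gives $\|\sum_{n=1}^m(a_n^{(k)}-a_n)\tau_n\|\leq\epsilon$ for all $m$ and all $k\geq N$. This simultaneously bounds the $\mathcal{Y}_d$-distance from $x^{(k)}$ to $\{a_n\}_n$ by $\epsilon$, once the limit is known to lie in $\mathcal{Y}_d$. To obtain the latter I would show the partial sums of $\sum a_n\tau_n$ are Cauchy in $\mathcal{X}$: splitting a tail $\sum_{n=m+1}^{m'}a_n\tau_n$ as $\sum_{n=m+1}^{m'}(a_n-a_n^{(k)})\tau_n+\sum_{n=m+1}^{m'}a_n^{(k)}\tau_n$, the first piece is at most $2\epsilon$ by the uniform bound, while the second is small for $m,m'$ large because $\{a_n^{(k)}\}_n$ is a fixed element of $\mathcal{Y}_d$ whose series converges. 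Completeness of $\mathcal{X}$ then finishes the argument.

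Finally, for the Schauder basis claim I would check that each $e_n\in\mathcal{Y}_d$ (its associated series is the single term $\tau_n$) and that every $\{a_n\}_n\in\mathcal{Y}_d$ equals $\sum_n a_n e_n$ in the $\mathcal{Y}_d$-norm. The key tail estimate is exactly $\|\{a_n\}_n-\sum_{n=1}^N a_n e_n\|=\sup_{m>N}\|\sum_{n=N+1}^m a_n\tau_n\|$, which tends to $0$ as $N\to\infty$ by the Cauchy criterion for the convergent series $\sum a_n\tau_n$. Uniqueness of the coefficients follows from boundedness of the coordinate functionals established above, so the expansion is a genuine Schauder representation.
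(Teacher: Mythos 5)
Your proof is correct and complete, and since the paper states this lemma without proof (it is quoted from the cited reference of Casazza, Christensen, and Stoeva), the only meaningful comparison is with the standard argument there, which yours matches essentially step for step. In particular you hit all the points where the hypotheses are actually used: positive-definiteness and boundedness of the coordinate functionals via the telescoping identity $a_n\tau_n=S_n-S_{n-1}$ together with $\tau_n\neq 0$; completeness via the uniform-in-$m$ estimate $\left\|\sum_{n=1}^m\left(a_n^{(k)}-a_n\right)\tau_n\right\|\leq \epsilon$ obtained by letting $j\to\infty$ in the Cauchy condition, which both certifies convergence in the new norm and (combined with the Cauchy criterion for a fixed $x^{(k)}$) shows the limit sequence lies in $\mathcal{Y}_d$; and the Schauder basis property via the tail identity $\left\|\{a_n\}_n-\sum_{n=1}^N a_n e_n\right\|=\sup_{m>N}\left\|\sum_{n=N+1}^m a_n\tau_n\right\|\to 0$, with uniqueness of coefficients from the bounded coordinate functionals.
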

  \begin{theorem}
  Let $\mathcal{X}$ be a Banach space and   $\{f_n\}_{n}$ be a sequence in   $\operatorname{Lip}_0(\mathcal{X}, \mathbb{K})$.   Then the following are equivalent. 
  \begin{enumerate}[\upshape(i)]
  	\item There exists a  sequence $\{\tau_n\}_{n}$ in  $\mathcal{X}$ such that 
  	$
  	x=\sum_{n=1}^{\infty}f_n(x)\tau_n,  \forall x \in \mathcal{X}.
  	$
  	\item Let $\{\tau_n\}_{n}$ be a sequence in $\mathcal{X}$ and  define $S_n(x)\coloneqq \sum_{k=1}^{n}f_k(x)\tau_k$, $\forall x \in \mathcal{X}$, for each $n \in \mathbb{N}$. Then $\sup_{n \in\mathbb{N}}\|S_n\|_{\operatorname{Lip}_0} <\infty$ and    there exist a BK-space $\mathcal{M}_d$ and a bounded linear  map $S:\mathcal{M}_d \to \mathcal{M}$ such that $(\{f_n\}_{n}, S)$ is  a metric frame  for $\mathcal{X}$ w.r.t. $\mathcal{M}_d$. 
  \end{enumerate}
Further, a choice for $\tau_n$ is $\tau_n=Se_n$ for each $n \in \mathbb{N}$, where $\{e_n\}_{n}$ is the standard Schauder basis for $\ell^p(\mathbb{N})$.
  \end{theorem}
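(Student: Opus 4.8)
The plan is to prove the two implications separately, using for the forward direction the sequence space $\mathcal{Y}_d$ produced by Lemma \ref{ANOTHER} as the BK-space $\mathcal{M}_d$. I would dispatch the implication (ii) $\Rightarrow$ (i) first, since it is the cleaner half. Assume the reconstruction operator $S\colon\mathcal{M}_d\to\mathcal{X}$ is bounded linear with $S(\{f_n(x)\}_n)=x$ for every $x$. Using that the canonical unit vectors $\{e_n\}_n$ form a Schauder basis of $\mathcal{M}_d$ (as they do for the spaces at issue, in particular for the $\mathcal{Y}_d$ of Lemma \ref{ANOTHER}), I expand $\{f_n(x)\}_n=\sum_n f_n(x)e_n$ with convergence in $\mathcal{M}_d$, apply $S$, and pass the sum through it by continuity and linearity to get $x=S(\{f_n(x)\}_n)=\sum_n f_n(x)\,Se_n$. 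Setting $\tau_n\coloneqq Se_n$ then yields $x=\sum_n f_n(x)\tau_n$ for all $x$, which is (i), and this is exactly the ``Further'' clause.

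For (i) $\Rightarrow$ (ii) I would build everything around $\mathcal{Y}_d$. Given $\{\tau_n\}_n$ with $x=\sum_n f_n(x)\tau_n$ (discarding zero terms so Lemma \ref{ANOTHER} applies), set $\mathcal{M}_d\coloneqq\mathcal{Y}_d$; its coordinate functionals are continuous and its canonical vectors are a Schauder basis, so it is a BK-space. Define the synthesis map $S\colon\mathcal{Y}_d\to\mathcal{X}$ by $S(\{a_n\}_n)\coloneqq\sum_n a_n\tau_n$, which is well defined by the definition of $\mathcal{Y}_d$, linear, and bounded with $\|S\|\le 1$ because $\|\sum_n a_n\tau_n\|\le\sup_m\|\sum_{n=1}^m a_n\tau_n\|=\|\{a_n\}_n\|_{\mathcal{Y}_d}$. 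The analysis map $\theta_f(x)\coloneqq\{f_n(x)\}_n$ lands in $\mathcal{Y}_d$ by (i), which is condition (i) of the metric frame; the identity $S(\theta_f(x))=\sum_n f_n(x)\tau_n=x$ is condition (iii); and the lower bound holds with constant $1$, since $\|\theta_f(x)-\theta_f(y)\|_{\mathcal{Y}_d}=\sup_m\|S_m(x)-S_m(y)\|\ge\lim_m\|S_m(x)-S_m(y)\|=\|x-y\|$.

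What remains is the upper frame bound together with $\sup_n\|S_n\|_{\operatorname{Lip}_0}<\infty$, and these are one and the same statement: the identity $\|\theta_f(x)-\theta_f(y)\|_{\mathcal{Y}_d}=\sup_m\|S_m(x)-S_m(y)\|$ shows that $\theta_f$ is Lipschitz into $\mathcal{Y}_d$ with constant $b$ exactly when $\sup_m\|S_m\|_{\operatorname{Lip}_0}\le b$. This is the crux, and it is where the Lipschitz setting departs from the linear one: although $S_m(x)\to x$ for every $x$, the maps $S_m$ are only Lipschitz, so the scalar uniform-boundedness argument that handles bounded linear partial-sum operators is not directly available. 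The route I would take is to linearize each $S_n\in\operatorname{Lip}_0(\mathcal{X},\mathcal{X})$ to a bounded linear operator $\overline{S_n}$ on the Lipschitz-free (Arens--Eells) space, with $\|\overline{S_n}\|=\|S_n\|_{\operatorname{Lip}_0}$ and $\overline{S_n}(\delta_x)=S_n(x)$, and then invoke the uniform boundedness principle there. The genuinely hard step, which I expect to be the main obstacle, is to promote the pointwise control available on the generating atoms $\delta_x$ and their finite linear combinations, where reconstruction forces $\overline{S_n}(\delta_x)\to x$ and hence boundedness, to pointwise boundedness on the whole free space, as Banach--Steinhaus demands; once $\sup_n\|\overline{S_n}\|=\sup_n\|S_n\|_{\operatorname{Lip}_0}<\infty$ is secured, this common value serves as the upper bound $b$ and completes the metric frame.
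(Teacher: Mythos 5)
Your overall architecture matches the paper's: for (ii)\,$\Rightarrow$\,(i) the paper simply cites Theorem \ref{PALL}, whose proof is exactly your move $\tau_n\coloneqq Se_n$ via the Schauder basis (and it shares your implicit restriction to sequence spaces where the unit vectors do form a basis, as the ``Further'' clause indicates); for (i)\,$\Rightarrow$\,(ii) the paper also builds on the $\mathcal{Y}_d$ of Lemma \ref{ANOTHER}, with synthesis map of norm at most $1$, lower frame bound $1$ from $\|\theta_f(x)-\theta_f(y)\|_{\mathcal{Y}_d}=\sup_m\|S_m(x)-S_m(y)\|\geq\|x-y\|$, all as you have it. One concrete deviation: you cannot ``discard zero terms.'' The metric frame must be for the \emph{entire} sequence $\{f_n\}_n$, so the coordinates $f_n(x)$ with $n\in A\coloneqq\{n:\tau_n=0\}$ must still be carried by $\mathcal{M}_d$ (condition (i) of Definition \ref{METRICBANACHFRAME} requires $\{f_n(x)\}_{n\in\mathbb{N}}\in\mathcal{M}_d$). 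The paper handles this by adjoining a weighted copy of $c_0(A)$, namely $\mathcal{Z}_d$ with weights $1/(n(\|f_n\|_{\operatorname{Lip}_0}+1))$, and taking $\mathcal{M}_d\coloneqq\mathcal{Y}_d\oplus\mathcal{Z}_d$; the weights are chosen so the $A$-part costs only $+1$ in the upper bound and $S$ simply ignores it. This omission in your plan is real but mechanically fixable by the same device.

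The genuine gap is the one you flagged yourself, and it is fatal to your proposed repair: $\sup_n\|S_n\|_{\operatorname{Lip}_0}<\infty$ cannot be obtained by linearizing to $\mathcal{F}(\mathcal{X})$ and invoking Banach--Steinhaus, because reconstruction gives pointwise boundedness of $\overline{S_n}$ only on the span of the molecules $\delta_x$, a dense but meager subspace, and that never triggers the uniform boundedness principle. In fact no argument can close this step, because the claim is false as a consequence of (i) alone: on $\mathcal{X}=\mathbb{R}$ take $\tau_n=1$ for all $n$, let $\phi(x)\coloneqq\max(-1,\min(1,x))$, set $S_n(x)\coloneqq x+\tfrac{1}{n}\sin\left(n^3\phi(x)\right)$ with $S_0\coloneqq0$, and define $f_n\coloneqq S_n-S_{n-1}$. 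Each $f_n\in\operatorname{Lip}_0(\mathbb{R},\mathbb{R})$, the partial sums of $\sum_n f_n(x)\tau_n$ telescope to $S_n(x)\to x$ for every $x$ (even uniformly), so (i) holds, yet $\|S_n\|_{\operatorname{Lip}_0}\geq 1+n^2\to\infty$. You should also know that the paper does not close this step either: its proof of (i)\,$\Rightarrow$\,(ii) writes the upper frame bound as $\sup_{n\in B}\|S_n\|_{\operatorname{Lip}_0}+1$ without ever establishing finiteness --- in the linear setting of \cite{CASAZZACHRISTENSENSTOEVA} that finiteness comes from Banach--Steinhaus applied to the linear partial-sum operators on $\mathcal{X}$ itself, which is precisely what has no Lipschitz analogue. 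So your diagnosis of where the difficulty lies is accurate and more candid than the paper's treatment, but the proposal is incomplete at the decisive step, and as the example shows, the step needs to be assumed (or the statement reformulated) rather than proved.
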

  \begin{proof}
  	(ii)	$\Rightarrow$ (i) This follows from Theorem \ref{PALL}.\\
  	(i)	$(\Rightarrow)$ (ii) We give an argument which is similar to   the arguments in \cite{CASAZZACHRISTENSENSTOEVA}. Define $A\coloneqq \{n \in \mathbb{N}: \tau_n=0\}$ and $B\coloneqq \mathbb{N} \setminus A$. Let $c_0(A) $ be the space of sequences converging to zero, indexed by $A$, equipped with sup-norm.  Let $\{e_n\}_{n\in A}$ be the canonical Schauder basis for $c_0(A) $. Since the norm is sup-norm, it easily follows that $\{\frac{1}{n(\|f_n\|_{\operatorname{Lip}_0}+1)}e_n\}_{n\in A}$ is also a Schauder basis for $c_0(A) $. Define 
  	\begin{align*}
  	\mathcal{Z}_d\coloneqq \left\{\{c_n\}_{n\in A}: \sum_{n\in A}\frac{c_n}{n(\|f_n\|_{\operatorname{Lip}_0}+1)}e_n \text{ converges in } A\right\}.
  	\end{align*}
  	We equip $\mathcal{Z}_d $ with the norm 
  	\begin{align*}
  	\|\{c_n\}_{n\in A}\|_{\mathcal{Z}_d}\coloneqq \left\|\frac{c_n}{n(\|f_n\|_{\operatorname{Lip}_0}+1)}\right\|_{c_0(A)}=\sup _{n\in A}\left|\frac{c_n}{n(\|f_n\|_{\operatorname{Lip}_0}+1)}\right|.
  	\end{align*}
  	Then $\{e_n\}_{n\in A}$ is a Schauder basis for $\mathcal{Z}_d $. Clearly $\mathcal{Z}_d $ is a BK-space. Let $\mathcal{Y}_d $ be as defined in  Lemma \ref{ANOTHER}, for the index set $B$. Now set $\mathcal{M}_d \coloneqq \mathcal{Y}_d \oplus \mathcal{Z}_d $ equipped with norm $\|y \oplus z \|_{\mathcal{M}_d}\coloneqq \|y\|_{\mathcal{Y}_d} +\|z\|_{\mathcal{Z}_d}$. It then follows that, for each $x \in \mathcal{X}$, $\{f_n(x)\}_{n\in B}\oplus \{f_n(x)\}_{n\in A} \in \mathcal{M}_d$. We next show that $\{f_n\}_{n}$ is a metric  $\mathcal{M}_d$-frame for $\mathcal{X}$. Let $x, y \in \mathcal{X}$. Then 
  	\begin{align*}
  	\|x-y\|&=\left\|\sum_{n=1}^{\infty}(f_n(x)-f_n(y))\tau_n\right\|=\lim_{n\to\infty}\left\|\sum_{k=1}^{n}(f_k(x)-f_k(y))\tau_k\right\|\\
  	&\leq \sup _{n\in \mathbb{N}}\left\|\sum_{k=1}^{n}(f_k(x)-f_k(y))\tau_k\right\|=\sup _{n\in B}\left\|\sum_{k=1}^{n}(f_k(x)-f_k(y))\tau_k\right\|\\
  	&=\|\{f_n(x)-f_n(y)\}_{n\in B}\|_{\mathcal{Y}_d}\\
  	&\leq \|\{f_n(x)-f_n(y)\}_{n\in B}\|_{\mathcal{Y}_d}+\|\{f_n(x)-f_n(y)\}_{n\in A}\|_{\mathcal{Z}_d}\\
  	&= \|\{f_n(x)-f_n(y)\}_{n\in B}\oplus \{f_n(x)-f_n(y)\}_{n\in A}\|_{\mathcal{M}_d}
  	\end{align*}
  	and 
  	\begin{align*}
  	&\|\{f_n(x)-f_n(y)\}_{n\in B}\oplus \{f_n(x)-f_n(y)\}_{n\in A}\|_{\mathcal{M}_d}\\
  	&=\|\{f_n(x)-f_n(y)\}_{n\in B}\|_{\mathcal{Y}_d}+\|\{f_n(x)-f_n(y)\}_{n\in A}\|_{\mathcal{Z}_d}\\
  	&=\sup _{n\in B}\left\|\sum_{k=1}^{n}(f_k(x)-f_k(y))\tau_k\right\|+\sup _{n\in A}\left|\frac{f_n(x)-f_n(y)}{n(\|f_n\|_{\operatorname{Lip}_0}+1)}\right|\\
  	&=\sup _{n\in B}\left\|S_n(x)-S_n(y)\right\|+\sup _{n\in A}\left|\frac{f_n(x)-f_n(y)}{n(\|f_n\|_{\operatorname{Lip}_0}+1)}\right|\\
  	&\leq \sup_{n \in B}\|S_n\|_{\operatorname{Lip}_0}\|x-y\|+\sup _{n\in A}\frac{\|f_n\|_{\operatorname{Lip}_0}\|x-y\|}{n(\|f_n\|_{\operatorname{Lip}_0}+1)}\\
  	&\leq\left(\sup_{n \in B}\|S_n\|_{\operatorname{Lip}_0}+1\right)\|x-y\|.
  	\end{align*}
  	We now define 
  	\begin{align*}
  	S:\mathcal{M}_d \ni \{a_n\}_{n\in B}\oplus \{b_n\}_{n\in A}\mapsto \sum_{n\in B}a_n\tau_n \in \mathcal{X}.
  	\end{align*}
   Clearly $S$ is linear. Proof completes if we show that $S$ is bounded. This follows from the following calculation.
   
  \begin{align*}
  \|S(\{a_n\}_{n\in B}\oplus \{b_n\}_{n\in A})\|&=\left\|\sum_{n\in B}a_n\tau_n\right\|\leq \sup _{n\in B}\left\|\sum_{k=1}^n a_k\tau_k\right\|\\
  &=\|\{a_n\}_{n\in B}\|_{\mathcal{Y}_d} \leq \|\{a_n\}_{n\in B}\oplus \{b_n\}_{n\in A}\|_{\mathcal{M}_d}.
  \end{align*}
  \end{proof}
  
  In the study of Lipschitz functions it is natural to shift from metric space to the setting of Banach spaces and use functional analysis tools on Banach spaces. This is achieved through the following theorem. 
  \begin{theorem}\cite{WEAVER, KALTON}\label{POINTEDSPLITS}
  	Let $(\mathcal{M},0)$ be a pointed metric space. Then there exists a Banach space  $\mathcal{F}(\mathcal{M})$ and an isometric embedding $e:\mathcal{M} \to \mathcal{F}(\mathcal{M})$ satisfying
  	the following  universal property: for each Banach space $\mathcal{X}$ and each $f \in \operatorname{Lip}_0(\mathcal{M}, \mathcal{X})$, there is a unique bounded linear operator 
  	$T_f :\mathcal{F}(\mathcal{M})\to \mathcal{X} $ such that $T_fe=f$, i.e., the following diagram commutes.
  	
  	\begin{center}
  		\[
  		\begin{tikzcd}
  		 \mathcal{M} \arrow[d,"e"] \arrow[dr,"f"]\\
  		 \mathcal{F}(\mathcal{M}) \arrow[r,"T_f"] & \mathcal{X}
  		\end{tikzcd}
  	  \]
  	\end{center}
  	Further, $\|T_f\|=\|f\|_{\operatorname{Lip}_0}$. This property characterizes the pair $(\mathcal{F}(\mathcal{M}),e)$ uniquely upto isometric isomorphism. Moreover, the map 
  	$\operatorname{Lip}_0(\mathcal{M}, \mathcal{X})\ni f \mapsto T_f \in \mathcal{B}(\mathcal{F}(\mathcal{M}),\mathcal{X}) $ (space of bounded linear operators from $\mathcal{F}(\mathcal{M})$ to $\mathcal{X} $) is an isometric isomorphism.
  \end{theorem}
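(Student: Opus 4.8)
The plan is to realize $\mathcal{F}(\mathcal{M})$ concretely as a closed subspace of the dual of the scalar Lipschitz space and then verify the universal property by hand. Throughout, write $\operatorname{Lip}_0(\mathcal{M})\coloneqq\operatorname{Lip}_0(\mathcal{M},\mathbb{R})$, which is a Banach space by Theorem \ref{BANACHALGEBRA}(ii). For each $x\in\mathcal{M}$ define the evaluation functional $\delta_x\in\operatorname{Lip}_0(\mathcal{M})^*$ by $\delta_x(g)\coloneqq g(x)$, and set $\mathcal{F}(\mathcal{M})$ to be the closed linear span of $\{\delta_x:x\in\mathcal{M}\}$ inside $\operatorname{Lip}_0(\mathcal{M})^*$, with $e(x)\coloneqq\delta_x$.

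First I would show that $e$ is an isometric embedding. The inequality $\|\delta_x-\delta_y\|\leq d(x,y)$ is immediate from the definition of the Lipschitz norm. For the reverse inequality I would feed the function $g(z)\coloneqq d(z,y)-d(0,y)$ into the pairing: by the reverse triangle inequality it lies in $\operatorname{Lip}_0(\mathcal{M})$ with $\|g\|_{\operatorname{Lip}_0}=1$ and $g(x)-g(y)=d(x,y)$, so $\|\delta_x-\delta_y\|\geq|g(x)-g(y)|=d(x,y)$. This is exactly the separation content made available by Corollary \ref{MCSHANECORO} (equivalently, the Mc-Shane extension Theorem \ref{MCSHANE}). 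In particular $\|e\|_{\operatorname{Lip}_0}=1$.

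The central step is the universal property. Given a Banach space $\mathcal{X}$ and $f\in\operatorname{Lip}_0(\mathcal{M},\mathcal{X})$, I would define $T_f$ on the linear span of $\{\delta_x\}$ by $T_f(\sum_i a_i\delta_{x_i})\coloneqq\sum_i a_i f(x_i)$ and prove the key estimate $\|\sum_i a_i f(x_i)\|\leq\|f\|_{\operatorname{Lip}_0}\,\|\sum_i a_i\delta_{x_i}\|$. The idea is to dualize: by Hahn--Banach the left side equals $\sup\{|\phi(\sum_i a_i f(x_i))|:\phi\in\mathcal{X}^*,\ \|\phi\|\leq1\}$, and for each such $\phi$ the composition $\phi\circ f$ is a scalar function in $\operatorname{Lip}_0(\mathcal{M})$ with $\|\phi\circ f\|_{\operatorname{Lip}_0}\leq\|f\|_{\operatorname{Lip}_0}$, while $\sum_i a_i\phi(f(x_i))=\langle\sum_i a_i\delta_{x_i},\phi\circ f\rangle$; the estimate then drops out of the duality pairing. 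This bound shows simultaneously that $T_f$ is well-defined (independent of the representation) and bounded with $\|T_f\|\leq\|f\|_{\operatorname{Lip}_0}$, so it extends uniquely by density to $\mathcal{F}(\mathcal{M})$, and $T_f e=f$ by construction. Uniqueness of $T_f$ is forced because any bounded linear map agreeing with $f$ on the dense set $\{\delta_x\}$ coincides with it. The reverse norm inequality $\|T_f\|\geq\|f\|_{\operatorname{Lip}_0}$ follows from the isometry of $e$, since $\frac{\|f(x)-f(y)\|}{d(x,y)}=\frac{\|T_f(\delta_x-\delta_y)\|}{\|\delta_x-\delta_y\|}\leq\|T_f\|$.

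Finally I would collect the formal consequences. Uniqueness of $(\mathcal{F}(\mathcal{M}),e)$ up to isometric isomorphism is the standard universal-property argument: a second pair $(\mathcal{G},e')$ induces mutually inverse operators via the two universal properties, which are contractions (their norms equal $\|e'\|_{\operatorname{Lip}_0}=\|e\|_{\operatorname{Lip}_0}=1$) and hence isometries. For the last assertion, the map $f\mapsto T_f$ is linear, is isometric by the equality $\|T_f\|=\|f\|_{\operatorname{Lip}_0}$ just established, and is surjective: any $T\in\mathcal{B}(\mathcal{F}(\mathcal{M}),\mathcal{X})$ satisfies $T=T_f$ for $f\coloneqq T\circ e\in\operatorname{Lip}_0(\mathcal{M},\mathcal{X})$, since the two agree on $\{\delta_x\}$. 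I expect the only genuine obstacle to be the key estimate in the universal-property step; everything else is a routine unwinding of definitions once the Mc-Shane-type separation from Corollary \ref{MCSHANECORO} is in hand.
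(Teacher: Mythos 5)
The paper does not actually prove this theorem --- it is quoted with citations to Weaver and Godefroy--Kalton --- and your argument (realizing $\mathcal{F}(\mathcal{M})$ as the closed span of the evaluation functionals $\delta_x$ in $\operatorname{Lip}_0(\mathcal{M})^*$, norming $\delta_x-\delta_y$ with the McShane-type function $g(z)=d(z,y)-d(0,y)$, and verifying the key estimate for $T_f$ by dualizing through Hahn--Banach via $\phi\circ f$) is precisely the standard proof found in those references, and it is correct, including the density/uniqueness, uniqueness-up-to-isometry, and surjectivity steps. The one cosmetic point is that you fix real scalars while the paper works over $\mathbb{K}$: for $\mathbb{K}=\mathbb{C}$ the identical argument runs verbatim with $\operatorname{Lip}_0(\mathcal{M},\mathbb{C})$ in place of $\operatorname{Lip}_0(\mathcal{M},\mathbb{R})$ (your norming function $g$ is real-valued and hence still available, and $\phi\circ f$ lands in the complex Lipschitz space), so nothing breaks.
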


  The space $\mathcal{F}(\mathcal{M})$ is known as Arens-Eells space or  Lipschitz-free 	Banach space (see \cite{GODEFROYSURVEY}). Theorem \ref{POINTEDSPLITS} tells that in order to ``find" the space $\operatorname{Lip}_0(\mathcal{M}, \mathcal{X})$, we can find first $\mathcal{F}(\mathcal{M})$ and then $\mathcal{B}(\mathcal{F}(\mathcal{M}),\mathcal{X}) $. In particular, $\operatorname{Lip}_0(\mathcal{M}, \mathbb{K})$ is isometrically isomorphic to  $\mathcal{F}(\mathcal{M})^*$. For this reason $\mathcal{F}(\mathcal{M})$ is also called as predual of metric space $\mathcal{M}$. As examples, it is known that $\mathcal{F}(\mathbb{N})\equiv \ell^1(\mathbb{N})$, $\mathcal{F}(\mathbb{R})\equiv \mathcal{L}^1(\mathbb{R})$.  The bounded linear  operator $T_f$ is called as linearizaton of $f$.  \\
  
  Using Theorem \ref{POINTEDSPLITS} we derive the following result which tells that given a metric frame for a metric space we can get a metric frame using linear functionals for a subset of the Banach space.
  \begin{theorem}\label{LIPIFFLINEAR}
  	Let $\{f_n\}_{n}$  be a sequence in $ \operatorname{Lip}_0(\mathcal{M}, \mathbb{K})$. For  each $n\in \mathbb{N}$, let  $T_{f_n}$ be linearization of $f_n$. Let $e$ and $\mathcal{F}(\mathcal{M})$ 
  	be as in Theorem \ref{POINTEDSPLITS}. Then $\{f_n\}_{n}$ is a 
  	metric frame  for $\mathcal{M}$ w.r.t. $\mathcal{M}_d$ with bounds $a$ and $b$ if and only if $\{T_{f_n}\}_{n}$ is a 
  	metric frame  for $e(\mathcal{M})$ w.r.t. $\mathcal{M}_d$ with bounds $a$ and $b$. In particular, $(\{f_n\}_{n}, S)$ is a metric frame  for $\mathcal{M}$ w.r.t. $\mathcal{M}_d$ if and only if $(\{T_{f_n}\}_{n}, eS)$ is a metric frame  for $e(\mathcal{M})$ w.r.t. $\mathcal{M}_d$.
  \end{theorem}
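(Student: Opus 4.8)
The plan is to use the defining properties of the Lipschitz-free space from Theorem \ref{POINTEDSPLITS} --- that $e$ is an isometric embedding with $e(0)=0$ and that each linearization satisfies $T_{f_n}e=f_n$ --- to show that the assignment $x\mapsto e(x)$ transports the frame conditions for $\{f_n\}_n$ verbatim into those for $\{T_{f_n}\}_n$. First I would record two identities: for $x,y\in\mathcal{M}$, writing $u=e(x)$ and $v=e(y)$, linearity of $T_{f_n}$ together with $T_{f_n}e=f_n$ gives $T_{f_n}(u)-T_{f_n}(v)=f_n(x)-f_n(y)$ for every $n$, while the isometry of $e$ gives $\|u-v\|_{\mathcal{F}(\mathcal{M})}=d(x,y)$. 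Since $e$ is a bijection of $\mathcal{M}$ onto $e(\mathcal{M})$, every pair in $e(\mathcal{M})$ arises in this way from a unique pair in $\mathcal{M}$.

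Next I would check the frame conditions one at a time. The membership condition is immediate because the sequences $\{f_n(x)\}_n$ and $\{T_{f_n}(e(x))\}_n$ are literally equal, so one lies in $\mathcal{M}_d$ exactly when the other does. For the two-sided inequality, substituting the identities above shows that $a\,d(x,y)\leq\|\{f_n(x)-f_n(y)\}_n\|_{\mathcal{M}_d}\leq b\,d(x,y)$ holds for all $x,y\in\mathcal{M}$ if and only if $a\,\|u-v\|_{\mathcal{F}(\mathcal{M})}\leq\|\{T_{f_n}(u)-T_{f_n}(v)\}_n\|_{\mathcal{M}_d}\leq b\,\|u-v\|_{\mathcal{F}(\mathcal{M})}$ holds for all $u,v\in e(\mathcal{M})$, with the same constants; this is exactly the first equivalence. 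One should also note that each $T_{f_n}$, being bounded linear on $\mathcal{F}(\mathcal{M})$, restricts to a Lipschitz function on $e(\mathcal{M})$ satisfying $T_{f_n}(e(0))=f_n(0)=0$, so $\{T_{f_n}\}_n$ is an admissible family on the pointed space $(e(\mathcal{M}),0)$.

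For the reconstruction statement, given a Lipschitz $S$ with $S(\{f_n(x)\}_n)=x$, I would take $eS:\mathcal{M}_d\to e(\mathcal{M})$, which is Lipschitz as the composition of $S$ with the isometry $e$, and verify for $u=e(x)$ that $eS(\{T_{f_n}(u)\}_n)=eS(\{f_n(x)\}_n)=e(S(\{f_n(x)\}_n))=e(x)=u$; combined with the frame inequalities already established, this shows $(\{T_{f_n}\}_n,eS)$ is a metric frame for $e(\mathcal{M})$. The converse runs identically after composing with the isometric inverse $e^{-1}:e(\mathcal{M})\to\mathcal{M}$, which turns a reconstruction operator $\widetilde{S}$ for $e(\mathcal{M})$ into the reconstruction operator $e^{-1}\widetilde{S}$ for $\mathcal{M}$.

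I do not anticipate a genuine obstacle: the whole argument is an exact transport of structure along the isometric identification $e$, and the constants are preserved because $e$ is an isometry. The only points that require care are that $e(0)=0$ (so that the pointed structure and the vanishing $T_{f_n}(e(0))=0$ are respected) and that $eS$ really lands in $e(\mathcal{M})$, so that it qualifies as a reconstruction operator for the frame on $e(\mathcal{M})$.
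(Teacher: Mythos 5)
Your proposal is correct and follows essentially the same route as the paper's proof: transporting the frame inequalities along the isometric embedding $e$ via the identity $T_{f_n}e=f_n$, with constants preserved because $e$ is an isometry. You additionally spell out the reconstruction step $eS(\{T_{f_n}(u)\}_n)=u$ and its converse via $e^{-1}$, which the paper states in the ``in particular'' clause but leaves implicit; this is a harmless and welcome completion of the same argument.
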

  \begin{proof}
  	$(\Rightarrow)$ Let $u,v \in e(\mathcal{M})$. Then $u=e(x), v=e(y)$, for some $x, y \in \mathcal{M}$. Now using the fact that $e$ is an isometry,
  	\begin{align*}
  		a\|u-v\|&=a\|e(x)-e(y)\|=a\,d(x,y)\leq \|\{f_n(x)-f_n(y)\}_n\|\\
  		&=\|\{(T_{f_n}e)(x)-(T_{f_n}e)(y)\|=\|\{T_{f_n}(e(x))-T_{f_n}(e(y))\}_n\|\\
  		&=\|\{T_{f_n}(u)-T_{f_n}(v)\}_n\|\leq b \,d(x,y)=b\|e(x)-e(y)\|=b\|u-v\|.
  	\end{align*}
  	$(\Leftarrow)$ Let $x, y \in \mathcal{M}$. Then $e(x), e(y) \in e(\mathcal{M})$. Hence 
  	
  	\begin{align*}
  		a\,d(x,y)&=a\|e(x)-e(y)\|\leq \|\{T_{f_n}(e(x))-T_{f_n}(e(y))\}_n\|\\
  		&=\|\{f_n(x)-f_n(y)\}_n\|\leq b\|e(x)-e(y)\|=b\,d(x,y).
  	\end{align*}
  	Since $x,y$ were arbitrary, the result follows.
  \end{proof}
   We end this paper by presenting some stability results. These are important as it says that sequences which are close to metric frames are again metric frames. On the other hand, it asserts that if we perturb a metric frame we again get metric frame.
 \begin{theorem}\label{FIRSTPERTURB}
  	Let $\{f_n\}_{n}$ be a
  	metric frame  for $\mathcal{M}$ w.r.t. $\ell^p(\mathbb{N})$ with bounds $a$ and $b$. Let $\{g_n\}_{n}$ be a sequence in $\operatorname{Lip}(\mathcal{M}, \mathbb{K})$ satisfying the following.
  	\begin{enumerate}[\upshape(i)]
  		\item There exist $\alpha, \beta, \gamma \geq 0$ such that $\beta<1$.
  		\item $\alpha<1$, $\gamma<(1-\alpha)a$.
  		\item For all $x, y \in  \mathcal{M}$, 
  		\begin{align}\label{PERINEQUA}
  			\left(\sum_{n=1}^m|(f_n-g_n)(x)-(f_n-g_n)(y)|^p\right)^\frac{1}{p}&\leq \alpha \left(\sum_{n=1}^m|f_n(x)-f_n(y)|^p\right)^\frac{1}{p}\\
  			&+
  			\beta \left(\sum_{n=1}^m|g_n(x)-g_n(y)|^p\right)^\frac{1}{p}+\gamma \,d(x,y), \quad m=1, 2,\dots \nonumber.
  		\end{align}
  	\end{enumerate}
  	Then $\{g_n\}_{n}$ is a
  	metric frame  for $\mathcal{M}$ w.r.t. $\ell^p(\mathbb{N})$ with bounds
  $
  		\frac{((1-\alpha)a-\gamma)}{1+\beta}$ and  $ \frac{((1+\alpha)b+\gamma)}{1-\beta}.
  	$
  \end{theorem}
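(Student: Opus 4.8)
The plan is to run the standard Paley--Wiener perturbation argument adapted to the metric $\ell^p$-setting, where the only analytic tool needed is Minkowski's inequality applied to partial sums. For $x,y \in \mathcal{M}$ and $m \in \mathbb{N}$ write $F_m = \left(\sum_{n=1}^m |f_n(x)-f_n(y)|^p\right)^{1/p}$ and $G_m = \left(\sum_{n=1}^m |g_n(x)-g_n(y)|^p\right)^{1/p}$, and let $D_m$ denote the left-hand side of (\ref{PERINEQUA}). Since $\{g_n(x)-g_n(y)\}_n$ is the difference of $\{f_n(x)-f_n(y)\}_n$ and $\{(f_n-g_n)(x)-(f_n-g_n)(y)\}_n$, Minkowski's inequality in $\ell^p$ gives the two elementary sandwiching estimates $G_m \le F_m + D_m$ and $F_m \le G_m + D_m$ for every $m$. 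The whole proof consists of feeding hypothesis (iii), namely $D_m \le \alpha F_m + \beta G_m + \gamma\, d(x,y)$, into these two inequalities, solving for $G_m$, and then letting $m \to \infty$.

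For the upper bound I would combine $G_m \le F_m + D_m$ with (iii) to obtain $G_m \le (1+\alpha)F_m + \beta G_m + \gamma\, d(x,y)$; since $\beta < 1$ by (i), this rearranges to
\[
G_m \le \frac{(1+\alpha)F_m + \gamma\, d(x,y)}{1-\beta}.
\]
Because $\{F_m\}_m$ increases to $\|\{f_n(x)-f_n(y)\}_n\|_{\ell^p} \le b\, d(x,y)$, the right-hand side is bounded uniformly in $m$; as $\{G_m\}_m$ is nondecreasing and now bounded, it converges, which simultaneously shows $\{g_n(x)-g_n(y)\}_n \in \ell^p$ and, in the limit, yields the upper bound $\frac{(1+\alpha)b+\gamma}{1-\beta}$.

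For the lower bound I would instead start from $F_m \le G_m + D_m$ and (iii), giving $(1-\alpha)F_m \le (1+\beta)G_m + \gamma\, d(x,y)$; here $\alpha < 1$ from (ii) makes the coefficient of $F_m$ positive, so
\[
G_m \ge \frac{(1-\alpha)F_m - \gamma\, d(x,y)}{1+\beta}.
\]
Letting $m \to \infty$ and using the lower frame bound $\|\{f_n(x)-f_n(y)\}_n\|_{\ell^p} \ge a\, d(x,y)$ together with $1-\alpha>0$ produces $\|\{g_n(x)-g_n(y)\}_n\|_{\ell^p} \ge \frac{(1-\alpha)a-\gamma}{1+\beta}\, d(x,y)$, which is a genuine positive lower bound precisely because hypothesis (ii) guarantees $\gamma < (1-\alpha)a$.

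The argument is essentially routine once the Minkowski sandwiching is in place, so there is no serious obstacle; the only points requiring care are bookkeeping ones. First, one must justify passing to the limit on both sides, which is safe because each of $\{F_m\}_m$ and $\{G_m\}_m$ is a monotone sequence of partial sums and the upper estimate guarantees $\{G_m\}_m$ is bounded, hence convergent to the $\ell^p$-norm. Second, one must track the signs: the role of $\beta<1$ is to make the upper estimate solvable, while the roles of $\alpha<1$ and $\gamma<(1-\alpha)a$ are exactly to keep the lower bound strictly positive. The upper estimate moreover already furnishes $\{g_n(x)-g_n(y)\}_n \in \ell^p$ for all $x,y$, which is what the metric frame inequality requires.
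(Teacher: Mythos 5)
Your proposal is correct and follows essentially the same route as the paper's proof: Minkowski's inequality on the partial $\ell^p$-sums combined with hypothesis (iii), rearranging via $\beta<1$ for the upper bound and $\alpha<1$, $\gamma<(1-\alpha)a$ for the lower bound. The only cosmetic difference is that you keep everything at the partial-sum level and pass to the limit $m\to\infty$ at the end, whereas the paper first deduces convergence of $\sum_{n=1}^{\infty}|g_n(x)-g_n(y)|^p$ and then reruns the same estimates with the infinite sums.
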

  \begin{proof}
  Using Minkowski's inequality and Inequality (\ref{PERINEQUA}), we get, for all $x, y \in  \mathcal{M}$ and $m\in \mathbb{N}$,
  	\begin{align*}
  	\left(\sum_{n=1}^m|g_n(x)-g_n(y)|^p\right)^\frac{1}{p}&\leq \left(\sum_{n=1}^m|(f_n-g_n)(x)-(f_n-g_n)(y)|^p\right)^\frac{1}{p}+ \left(\sum_{n=1}^m|f_n(x)-f_n(y)|^p\right)^\frac{1}{p}\\
  	&\leq (1+\alpha) \left(\sum_{n=1}^m|f_n(x)-f_n(y)|^p\right)^\frac{1}{p}+\beta \left(\sum_{n=1}^m|g_n(x)-g_n(y)|^p\right)^\frac{1}{p}+\gamma \,d(x,y)\\
  	\end{align*}
  	which implies 
  	\begin{align*}
  	(1-\beta)\left(\sum_{n=1}^m|g_n(x)-g_n(y)|^p\right)^\frac{1}{p}\leq (1+\alpha)\left(\sum_{n=1}^m|f_n(x)-f_n(y)|^p\right)^\frac{1}{p}+\gamma  \,d(x,y), \quad \forall x, y \in  \mathcal{M}.\\
  	\end{align*}
  	Since the sum $\sum_{n=1}^\infty|f_n(x)-f_n(y)|^p$ converges, $\sum_{n=1}^\infty|g_n(x)-g_n(y)|^p$ will also converge. Inequality (\ref{PERINEQUA}) now gives 
  		\begin{align}\label{PERINEQUA2}
  	\left(\sum_{n=1}^\infty|(f_n-g_n)(x)-(f_n-g_n)(y)|^p\right)^\frac{1}{p}&\leq \alpha \left(\sum_{n=1}^\infty|f_n(x)-f_n(y)|^p\right)^\frac{1}{p}\nonumber\\
  	&+
  	\beta \left(\sum_{n=1}^\infty|g_n(x)-g_n(y)|^p\right)^\frac{1}{p}+\gamma \,d(x,y) .
  	\end{align}
  	By doing a similar calculation and using Inequality (\ref{PERINEQUA2}) we get  for all $x, y \in  \mathcal{M}$,
  	
  	\begin{align*}
  		\left(\sum_{n=1}^\infty|g_n(x)-g_n(y)|^p\right)^\frac{1}{p}&\leq \left(\sum_{n=1}^\infty|(f_n-g_n)(x)-(f_n-g_n)(y)|^p\right)^\frac{1}{p}+ \left(\sum_{n=1}^\infty|f_n(x)-f_n(y)|^p\right)^\frac{1}{p}\\
  		&\leq (1+\alpha) \left(\sum_{n=1}^\infty|f_n(x)-f_n(y)|^p\right)^\frac{1}{p}+\beta \left(\sum_{n=1}^\infty|g_n(x)-g_n(y)|^p\right)^\frac{1}{p}+\gamma \,d(x,y)\\
  		&\leq (1+\alpha)b  \,d(x,y)+\beta \left(\sum_{n=1}^\infty|g_n(x)-g_n(y)|^p\right)^\frac{1}{p}+\gamma \,d(x,y)\\
  		&=((1+\alpha)b+\gamma)  \,d(x,y)+\beta \left(\sum_{n=1}^\infty|g_n(x)-g_n(y)|^p\right)^\frac{1}{p}
  	\end{align*}
  	which gives
  	\begin{align*}
  		(1-\beta)\left(\sum_{n=1}^\infty|g_n(x)-g_n(y)|^p\right)^\frac{1}{p}\leq ((1+\alpha)b+\gamma)  \,d(x,y), \quad \forall x, y \in  \mathcal{M}\\
  		\text{i.e.,} \left(\sum_{n=1}^\infty|g_n(x)-g_n(y)|^p\right)^\frac{1}{p}\leq \frac{((1+\alpha)b+\gamma)}{1-\beta}\,d(x,y), \quad \forall x, y \in  \mathcal{M}.
  	\end{align*}
  	Hence we obtained upper frame bound for $\{g_n\}_n$. For lower frame bound, let  $x, y \in  \mathcal{M}$. Then 
  	\begin{align*}
  		\left(\sum_{n=1}^\infty|f_n(x)-f_n(y)|^p\right)^\frac{1}{p}&\leq \left(\sum_{n=1}^\infty|(f_n-g_n)(x)-(f_n-g_n)(y)|^p\right)^\frac{1}{p}+ \left(\sum_{n=1}^\infty|g_n(x)-g_n(y)|^p\right)^\frac{1}{p}\\
  		&\leq \alpha \left(\sum_{n=1}^\infty|f_n(x)-f_n(y)|^p\right)^\frac{1}{p}+(1+\beta)\left(\sum_{n=1}^\infty|g_n(x)-g_n(y)|^p\right)^\frac{1}{p}+\gamma \,d(x,y)
  	\end{align*}
  	which implies 
  	\begin{align*}
  		(1-\alpha)a\,d(x,y)&\leq (1-\alpha)\left(\sum_{n=1}^\infty|f_n(x)-f_n(y)|^p\right)^\frac{1}{p}\\
  		&\leq (1+\beta)\left(\sum_{n=1}^\infty|g_n(x)-g_n(y)|^p\right)^\frac{1}{p}+\gamma \,d(x,y), \quad \forall x, y \in  \mathcal{M}\\
  		\text{i.e.,} ~\frac{((1-\alpha)a-\gamma)}{1+\beta} &\leq \left(\sum_{n=1}^\infty|g_n(x)-g_n(y)|^p\right)^\frac{1}{p}, \quad \forall x, y \in  \mathcal{M}.
  	\end{align*}
  \end{proof}
In the theory of Hilbert spaces, it is known that sequences which are quadratically close to frames are again frames (see \cite{PER1995}). Using Theorem \ref{FIRSTPERTURB} we obtain a similar result for metric frames.	
  \begin{corollary}
  		Let $\{f_n\}_{n}$ be a
  	metric frame  for $\mathcal{M}$ w.r.t. $\ell^p(\mathbb{N})$ with bounds $a$ and $b$. Let $\{g_n\}_{n}$ be a sequence in $\operatorname{Lip}(\mathcal{M}, \mathbb{K})$ such that 
  	\begin{align*}
  	r\coloneqq \left(\sum_{n=1}^\infty \operatorname{Lip}(f_n-g_n)^p\right)^\frac{1}{p} <a.
  	\end{align*}
  	Then $\{g_n\}_{n}$ is a
  	metric frame  for $\mathcal{M}$ w.r.t. $\ell^p(\mathbb{N})$ with bounds $a-r$ and $b+r$.
  \end{corollary}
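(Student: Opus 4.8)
The plan is to apply Theorem \ref{FIRSTPERTURB} with the particular choice of perturbation parameters $\alpha = 0$, $\beta = 0$, and $\gamma = r$. With these values the general bounds $\frac{(1-\alpha)a - \gamma}{1+\beta}$ and $\frac{(1+\alpha)b + \gamma}{1-\beta}$ simplify immediately to $a - r$ and $b + r$, so the entire task reduces to checking that the three hypotheses of that theorem are met for this choice.

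First I would dispense with the parameter conditions. Condition (i) asks for $\alpha, \beta, \gamma \geq 0$ with $\beta < 1$; these hold trivially since $r \geq 0$ and $\beta = 0$. Condition (ii) asks for $\alpha < 1$ and $\gamma < (1-\alpha)a$; the former is clear, and the latter is exactly $r < a$, which is the standing hypothesis of the corollary.

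The substantive step is verifying the partial-sum inequality (\ref{PERINEQUA}). Fixing $m \in \mathbb{N}$ and $x, y \in \mathcal{M}$, the definition of the Lipschitz number gives $|(f_n - g_n)(x) - (f_n - g_n)(y)| \leq \operatorname{Lip}(f_n - g_n)\, d(x,y)$ for each $n$. Taking $p$-th powers, summing over $n = 1, \dots, m$, and extracting the $p$-th root, I would obtain
\begin{align*}
\left(\sum_{n=1}^m |(f_n - g_n)(x) - (f_n - g_n)(y)|^p\right)^{\frac{1}{p}} &\leq \left(\sum_{n=1}^m \operatorname{Lip}(f_n - g_n)^p\right)^{\frac{1}{p}} d(x,y) \\
&\leq \left(\sum_{n=1}^\infty \operatorname{Lip}(f_n - g_n)^p\right)^{\frac{1}{p}} d(x,y) = r\, d(x,y),
\end{align*}
where the second inequality simply bounds the finite partial sum by the convergent infinite sum. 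This is precisely Inequality (\ref{PERINEQUA}) with $\alpha = \beta = 0$ and $\gamma = r$, holding uniformly in $m$.

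With all hypotheses in place, Theorem \ref{FIRSTPERTURB} delivers the conclusion: $\{g_n\}_n$ is a metric frame for $\mathcal{M}$ with respect to $\ell^p(\mathbb{N})$ with lower bound $a - r$ and upper bound $b + r$. There is no genuine obstacle in this argument; the only point worth noting is that the finite partial sums are controlled uniformly by the full $\ell^p$-sum $r^p$, which is exactly what lets the additive term $\gamma\, d(x,y)$ in the perturbation hypothesis be taken as $r\, d(x,y)$ for every $m$ simultaneously.
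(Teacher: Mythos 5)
Your proposal is correct and takes essentially the same route as the paper: both apply Theorem \ref{FIRSTPERTURB} with $\alpha=\beta=0$ and $\gamma=r$, using $|(f_n-g_n)(x)-(f_n-g_n)(y)|\leq \operatorname{Lip}(f_n-g_n)\,d(x,y)$ to verify Inequality (\ref{PERINEQUA}). If anything, you are slightly more careful than the paper, which writes the verification only for the infinite sums, whereas you check the partial-sum inequality uniformly in $m$, which is what hypothesis (iii) of the theorem literally demands.
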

\begin{proof}
Define $\alpha\coloneqq 0$, $\beta\coloneqq 0$ and $\gamma\coloneqq r$. Then
	for all $x, y \in  \mathcal{M}$, 
	\begin{align*}
	\left(\sum_{n=1}^\infty|(f_n-g_n)(x)-(f_n-g_n)(y)|^p\right)^\frac{1}{p}&\leq \left(\sum_{n=1}^\infty \operatorname{Lip}(f_n-g_n)^p\,d(x,y)^p\right)^\frac{1}{p}\\
	&=\left(\sum_{n=1}^\infty \operatorname{Lip}(f_n-g_n)^p\right)^\frac{1}{p}\,d(x,y)=r\,d(x,y)\\
	&=\alpha \left(\sum_{n=1}^\infty|f_n(x)-f_n(y)|^p\right)^\frac{1}{p}\nonumber
	+
	\beta \left(\sum_{n=1}^\infty|g_n(x)-g_n(y)|^p\right)^\frac{1}{p}+\gamma \,d(x,y).
	\end{align*}
Thus the hypothesis in Theorem \ref{FIRSTPERTURB} holds. Hence the corollary.
\end{proof}
An inspection of proof of Theorem \ref{FIRSTPERTURB} gives the following result easily.
  \begin{corollary}
  	Let $\{f_n\}_{n}$ be a
  	metric Bessel sequence   for $\mathcal{M}$ w.r.t. $\ell^p(\mathbb{N})$ with bound $b$. Let $\{g_n\}_{n}$ be a sequence in $\operatorname{Lip}(\mathcal{M}, \mathbb{K})$ satisfying the following.
  	\begin{enumerate}[\upshape(i)]
  		\item There exist $\alpha, \beta, \gamma \geq 0$ such that $\beta<1$.
  		\item For all $x, y \in  \mathcal{M}$,
  		\begin{align*}
  			\left(\sum_{n=1}^m|(f_n-g_n)(x)-(f_n-g_n)(y)|^p\right)^\frac{1}{p}&\leq \alpha \left(\sum_{n=1}^m|f_n(x)-f_n(y)|^p\right)^\frac{1}{p}\\
  			&+
  			\beta \left(\sum_{n=1}^m|g_n(x)-g_n(y)|^p\right)^\frac{1}{p}+\gamma \,d(x,y), \quad m=1,\dots .
  		\end{align*}
  	\end{enumerate}
  	Then $\{g_n\}_{n}$ is a
  	metric Bessel sequence for $\mathcal{M}$ w.r.t. $\ell^p(\mathbb{N})$ with bound $	\frac{((1+\alpha)b+\gamma)}{1-\beta}.$
  \end{corollary}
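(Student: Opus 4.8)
The plan is to extract exactly the upper-bound half of the proof of Theorem \ref{FIRSTPERTURB}, discarding every step that relied on the lower frame bound or on the hypotheses $\alpha<1$ and $\gamma<(1-\alpha)a$. Those two conditions were invoked there solely to manufacture a strictly positive \emph{lower} frame bound for $\{g_n\}_n$; since a Bessel sequence demands no lower estimate, the single surviving assumption $\beta<1$ (needed only to rearrange the final inequality) suffices. So the whole task reduces to reproducing the upper-bound computation verbatim, replacing the frame hypothesis on $\{f_n\}_n$ by the weaker Bessel hypothesis with bound $b$.

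First I would establish convergence of the $g$-side series. Applying Minkowski's inequality together with the finite-sum perturbation inequality in (ii) gives, for every $x,y\in\mathcal{M}$ and every $m$,
\begin{align*}
(1-\beta)\left(\sum_{n=1}^m|g_n(x)-g_n(y)|^p\right)^\frac{1}{p}\leq (1+\alpha)\left(\sum_{n=1}^m|f_n(x)-f_n(y)|^p\right)^\frac{1}{p}+\gamma\,d(x,y).
\end{align*}
Because $\{f_n\}_n$ is metric $p$-Bessel with bound $b$, the right-hand side is dominated uniformly in $m$ by $\big((1+\alpha)b+\gamma\big)\,d(x,y)$, and $\beta<1$ keeps the left-hand coefficient positive; hence the partial sums of $\sum_{n=1}^\infty|g_n(x)-g_n(y)|^p$ are bounded and the series converges.

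With convergence secured, I would let $m\to\infty$ in the perturbation inequality to obtain its infinite-sum form, then run the same Minkowski estimate over the full series, arriving at
\begin{align*}
\left(\sum_{n=1}^\infty|g_n(x)-g_n(y)|^p\right)^\frac{1}{p}&\leq \left(\sum_{n=1}^\infty|(f_n-g_n)(x)-(f_n-g_n)(y)|^p\right)^\frac{1}{p}+\left(\sum_{n=1}^\infty|f_n(x)-f_n(y)|^p\right)^\frac{1}{p}\\
&\leq (1+\alpha)\left(\sum_{n=1}^\infty|f_n(x)-f_n(y)|^p\right)^\frac{1}{p}+\beta\left(\sum_{n=1}^\infty|g_n(x)-g_n(y)|^p\right)^\frac{1}{p}+\gamma\,d(x,y).
\end{align*}
Using the Bessel bound $\left(\sum_{n=1}^\infty|f_n(x)-f_n(y)|^p\right)^{1/p}\leq b\,d(x,y)$ and transposing the $\beta$-term, division by the positive factor $(1-\beta)$ yields
\begin{align*}
\left(\sum_{n=1}^\infty|g_n(x)-g_n(y)|^p\right)^\frac{1}{p}\leq \frac{(1+\alpha)b+\gamma}{1-\beta}\,d(x,y),\quad\forall x,y\in\mathcal{M},
\end{align*}
which is precisely the claimed metric $p$-Bessel bound for $\{g_n\}_n$.

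I expect no genuine obstacle: the only point requiring care is the transition from finite to infinite sums, and that is handled by the uniform boundedness of the partial sums established in the first step. Everything else is a faithful restriction of the argument for Theorem \ref{FIRSTPERTURB} to its upper-bound portion, which is why the result is stated as an immediate consequence of inspecting that proof.
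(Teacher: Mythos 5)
Your proposal is correct and is precisely what the paper intends: the paper proves this corollary only by the remark that it follows from inspecting the proof of Theorem \ref{FIRSTPERTURB}, and you have faithfully extracted the upper-bound half of that argument, correctly identifying that the hypotheses $\alpha<1$ and $\gamma<(1-\alpha)a$ served only the lower frame bound while $\beta<1$ is what permits the transposition and division. The uniform-in-$m$ bound on the partial sums, obtained from condition (ii) via Minkowski and the Bessel bound $b$, justifies the passage to infinite sums exactly as in the paper's proof of Theorem \ref{FIRSTPERTURB}.
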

 We next derive a stability result in which we perturb the Lipschitz functions and then derive the existence of reconstruction operator.  This is motivated from a result in \cite{CHRISTENSENHEIL}.	
  
  \begin{theorem}\label{STABILITYMA}
  	Let $(\{f_n\}_{n}, S)$ be a
  	metric frame for a Banach space    $\mathcal{X}$ w.r.t. $\mathcal{M}_d$. Assume that
  	$f_n(0)=0,$ for all $n \in \mathbb{N}$, and $S0=0$. Let $\{g_n\}_{n}$ be a collection in 
  	$\operatorname{Lip}_0(\mathcal{X}, \mathbb{K})$ satisfying the following.
  	\begin{enumerate}[\upshape(i)]
  		\item There exist $\alpha, \gamma\geq 0$ such that 
  		\begin{align}\label{MFPER}
  			\|\{(f_n-g_n)(x)-(f_n-g_n)(y)\}_n\|\leq \alpha \|\{f_n(x)-f_n(y)\}_n\|+\gamma \|x-y\|, \quad \forall x, y \in  \mathcal{X}.
  		\end{align}
  		\item $\alpha \|\theta_f\|_{\operatorname{Lip}_0}+\gamma\leq \|S\|_{\operatorname{Lip}_0}^{-1}.$
  	\end{enumerate}
  	Then there exists a reconstruction Lipschitz operator $T$ such that $(\{f_n\}_{n}, T)$ is  a
  	metric frame for     $\mathcal{X}$ w.r.t. $\mathcal{M}_d$ with bounds 
  	$
  		\|S\|_{\operatorname{Lip}_0}^{-1}-(\alpha\|\theta_f\|_{\operatorname{Lip}_0}+\gamma) $ and $ 
  		\|\theta_f\|_{\operatorname{Lip}_0}+(\alpha\|\theta_f\|_{\operatorname{Lip}_0}+\gamma).$
  	
  \end{theorem}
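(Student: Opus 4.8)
The plan is to transfer everything to the perturbed analysis map $\theta_g : \mathcal{X} \to \mathcal{M}_d$, $\theta_g x = \{g_n(x)\}_n$, and to build the reconstruction operator from the given $S$ using the Lipschitz Banach algebra structure of Theorem \ref{BANACHALGEBRA}. (Note that, although the statement names the pair $(\{f_n\}_n,T)$, the asserted bounds are those of the perturbed family, so the conclusion I will establish is that $\{g_n\}_n$ admits a reconstruction operator.) First I would read the perturbation hypothesis (\ref{MFPER}) as the statement that $\theta_f-\theta_g \in \operatorname{Lip}_0(\mathcal{X},\mathcal{M}_d)$ with
\[
\|\theta_f-\theta_g\|_{\operatorname{Lip}_0}\le \alpha\|\theta_f\|_{\operatorname{Lip}_0}+\gamma .
\]
Putting $y=0$ and using $f_n(0)=g_n(0)=0$ shows $\{g_n(x)\}_n\in\mathcal{M}_d$ for every $x$, so $\theta_g$ is well-defined and condition (i) of Definition \ref{METRICBANACHFRAME} holds.

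Next I would extract the frame bounds for $\{g_n\}_n$. Since $S\theta_f=I_\mathcal{X}$, the factorization gives $\|x-y\|=\|S\theta_f x-S\theta_f y\|\le\|S\|_{\operatorname{Lip}_0}\|\theta_f x-\theta_f y\|$, whence $\|\theta_f x-\theta_f y\|\ge\|S\|_{\operatorname{Lip}_0}^{-1}\|x-y\|$; together with $\|\theta_f x-\theta_f y\|\le\|\theta_f\|_{\operatorname{Lip}_0}\|x-y\|$ these are the frame bounds of $\{f_n\}_n$. Writing $\theta_g=\theta_f-(\theta_f-\theta_g)$ and applying the triangle and reverse triangle inequalities together with the displayed bound on $\|\theta_f-\theta_g\|_{\operatorname{Lip}_0}$ yields, for all $x,y$,
\[
\bigl(\|S\|_{\operatorname{Lip}_0}^{-1}-(\alpha\|\theta_f\|_{\operatorname{Lip}_0}+\gamma)\bigr)\|x-y\|\le\|\theta_g x-\theta_g y\|\le\bigl(\|\theta_f\|_{\operatorname{Lip}_0}+(\alpha\|\theta_f\|_{\operatorname{Lip}_0}+\gamma)\bigr)\|x-y\|,
\]
which are exactly the claimed bounds, and hypothesis (ii) forces the lower bound to be nonnegative.

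It remains to produce a Lipschitz reconstruction operator. The natural candidate is $T:=R^{-1}S$, where $R:=S\theta_g$; since $\theta_g(0)=0$ and $S0=0$ we have $R\in\operatorname{Lip}_0(\mathcal{X})$, and if $R$ is invertible then $T\theta_g=R^{-1}S\theta_g=R^{-1}R=I_\mathcal{X}$, with $T$ Lipschitz as a composite. By Theorem \ref{BANACHALGEBRA}(ii), $R$ is invertible in $\operatorname{Lip}_0(\mathcal{X})$ once $\|R-I_\mathcal{X}\|_{\operatorname{Lip}_0}<1$. As $R-I_\mathcal{X}=S\theta_g-S\theta_f$, I would aim for the chain
\[
\|S\theta_g-S\theta_f\|_{\operatorname{Lip}_0}\le\|S\|_{\operatorname{Lip}_0}\|\theta_g-\theta_f\|_{\operatorname{Lip}_0}\le\|S\|_{\operatorname{Lip}_0}(\alpha\|\theta_f\|_{\operatorname{Lip}_0}+\gamma)\le 1 ,
\]
the last step being precisely hypothesis (ii).

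The hard part will be the first inequality in that chain, namely the composition estimate $\|S\theta_g-S\theta_f\|_{\operatorname{Lip}_0}\le\|S\|_{\operatorname{Lip}_0}\|\theta_g-\theta_f\|_{\operatorname{Lip}_0}$. Because $S$ is merely Lipschitz and not linear, one cannot write $S\theta_g-S\theta_f=S(\theta_g-\theta_f)$, and the crude bound $\|(S\theta_g-S\theta_f)(x)-(S\theta_g-S\theta_f)(y)\|\le\|S\|_{\operatorname{Lip}_0}\bigl(\|(\theta_g-\theta_f)x\|+\|(\theta_g-\theta_f)y\|\bigr)$ only controls the increment by $\|x\|+\|y\|$ rather than by $\|x-y\|$. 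The delicate point is therefore to upgrade this to a genuine Lipschitz-seminorm estimate, which is exactly where the anchoring $f_n(0)=g_n(0)=0$, $S0=0$ and the membership $S\in\operatorname{Lip}_0(\mathcal{X})$ must be used in full. I would also record that, to obtain a \emph{strictly} positive lower frame bound, hypothesis (ii) should be read with strict inequality, so that $\|R-I_\mathcal{X}\|_{\operatorname{Lip}_0}<1$ and the Neumann-type inversion of Theorem \ref{BANACHALGEBRA}(ii) genuinely applies.
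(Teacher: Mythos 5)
Your outline coincides with the paper's own proof almost line for line: the same reading of (\ref{MFPER}) as the seminorm estimate $\|\theta_f-\theta_g\|_{\operatorname{Lip}_0}\leq \alpha\|\theta_f\|_{\operatorname{Lip}_0}+\gamma$, the same candidate $T\coloneqq (S\theta_g)^{-1}S$, and the same appeal to the Neumann-type inversion of Theorem \ref{BANACHALGEBRA}. Your side remarks are also accurate: the paper's proof in fact establishes $T\theta_g=I_{\mathcal{X}}$ together with frame bounds for $\{g_n\}_n$, so the conclusion should indeed name the pair $(\{g_n\}_n,T)$; and the proof uses hypothesis (ii) with strict inequality (``$<1$''), as you insist. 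Your reverse-triangle derivation of the lower bound $\|S\|_{\operatorname{Lip}_0}^{-1}-(\alpha\|\theta_f\|_{\operatorname{Lip}_0}+\gamma)$ is even slightly cleaner than the paper's, which recovers it from an estimate on $\|T\|_{\operatorname{Lip}_0}$ only after the inversion has been carried out.

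The step you explicitly leave open, however, is a genuine gap --- and you should know that the paper does not close it either: the inequality $\|S\theta_f-S\theta_g\|_{\operatorname{Lip}_0}\leq \|S\|_{\operatorname{Lip}_0}\|\theta_f-\theta_g\|_{\operatorname{Lip}_0}$ is asserted there in a single line with no justification. Your suspicion that it cannot be upgraded from the crude $\|x\|+\|y\|$ bound is correct: composition distributes over pointwise sums only on one side, so $S\theta_f-S\theta_g\neq S(\theta_f-\theta_g)$, and the asserted inequality is \emph{false} as a general principle for nonlinear Lipschitz $S$. Concretely, on $\mathbb{R}$ take $S(t)\coloneqq |t-R|-R$ (so $S\in\operatorname{Lip}_0(\mathbb{R})$ with $\|S\|_{\operatorname{Lip}_0}=1$), $u(x)\coloneqq x$, and $v\coloneqq u+c\phi$, where $\phi\in\operatorname{Lip}_0(\mathbb{R})$ satisfies $0\leq\phi\leq 1$, $\phi\equiv 0$ on $(-\infty,R/2]$, $\phi\equiv 1$ on $[R-c,\infty)$, and is affine in between, so that $\|u-v\|_{\operatorname{Lip}_0}\leq c\,(R/2-c)^{-1}$ is arbitrarily small for $R$ large; yet for $x\in(R-c,R)$ one computes $(Sv-Su)(x)=2x+c-2R$, whence $\|Su-Sv\|_{\operatorname{Lip}_0}\geq 2$. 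Nothing in the paper's chain of inequalities uses more than $\|S\|_{\operatorname{Lip}_0}$ and $\|\theta_f-\theta_g\|_{\operatorname{Lip}_0}$, so the proof as written is incomplete in exactly the place you identified. The argument --- yours and the paper's --- becomes valid under an added hypothesis legitimizing this step, most simply that the reconstruction operator $S$ is linear (as in the Banach frame setting and in several of the paper's examples): then $S\theta_f-S\theta_g=S(\theta_f-\theta_g)$, the estimate $\|S(\theta_f-\theta_g)\|_{\operatorname{Lip}_0}\leq\|S\|\,\|\theta_f-\theta_g\|_{\operatorname{Lip}_0}$ is immediate, and the remainder of your outline goes through verbatim.
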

  \begin{proof}
  	Let $ x\in \mathcal{X}$. Since $g_n(0)=0$ and $f_n(0)=0$ for all $n \in \mathbb{N}$, using Inequality (\ref{MFPER}),
  	\begin{align*}
  		\|\{g_n(x)\}_n\| &\leq \|\{(f_n-g_n)(x)\}_n\|+\|\{f_n(x)\}_n\|\\
  		&\leq (\alpha+1)\|\{f_n(x)\}_n\|+\gamma \|x\|.
  	\end{align*}
  	Therefore if we define $\theta_g:\mathcal{X} \ni x \mapsto \{g_n(x)\}_{n} \in\mathcal{M}_d$, then this map is well-defined. Again 
  	using Inequality (\ref{MFPER}), we show that $\theta_g$ is Lipschitz. For $x, y \in  \mathcal{X}$,
  	
  	\begin{align*}
  		\|\theta_gx-\theta_gy\|&=\|\{g_n(x)-g_n(y)\}_n\|=\|\{-g_n(x)+g_n(y)\}_n\| \\
  		&\leq \|\{(f_n-g_n)(x)-(f_n-g_n)(y)\}_n\|+\|\{f_n(x)-f_n(y)\}_n\|\\
  		&\leq (1+\alpha)\|\{f_n(x)-f_n(y)\}_n\|+\gamma \|x-y\|=(1+\alpha)\|\theta_fx-\theta_fy\|+\gamma \|x-y\|\\
  		&\leq (1+\alpha) \|\theta_f\|_{\operatorname{Lip}_0}\|x-y\|+\gamma \|x-y\|
  		=((1+\alpha) \|\theta_f\|_{\operatorname{Lip}_0}+\gamma)\|x-y\|.
  	\end{align*}
  	Thus $\|\theta_g\|_{\operatorname{Lip}_0}\leq (1+\alpha) \|\theta_f\|_{\operatorname{Lip}_0}+\gamma$. Previous calculation
  	also tells that upper frame bound is $((1+\alpha) \|\theta_f\|_{\operatorname{Lip}_0}+\gamma)$. We see further that  
  	Inequality (\ref{MFPER}) can be written as 
  	\begin{align}\label{THETAFTHETAG}
  		\|(\theta_f-\theta_g)x-(\theta_f-\theta_g)y\|&\leq \alpha \|\theta_fx-\theta_fy\|+\gamma \|x-y\|\nonumber\\
  		&\leq (\alpha \|\theta_f\|_{\operatorname{Lip}_0}+\gamma)\|x-y\|, \quad \forall x, y \in  \mathcal{X}.
  	\end{align}
  	Now noting 
  	$S\theta_f=I_\mathcal{X}$ and using Inequality (\ref{THETAFTHETAG}) we see that 
  	\begin{align*}
  		\|I_\mathcal{X}-S\theta_g\|_{\operatorname{Lip}_0}&=\|S\theta_f-S\theta_g\|_{\operatorname{Lip}_0}\\
  		&\leq \|S\|_{\operatorname{Lip}_0}\|\theta_f-\theta_g\|_{\operatorname{Lip}_0}\\
  		&\leq \|S\|_{\operatorname{Lip}_0}(\alpha \|\theta_f\|_{\operatorname{Lip}_0}+\gamma)<1.
  	\end{align*}
  	Since $\operatorname{Lip}_0(\mathcal{X})$ is a unital Banach algebra (Theorem \ref{BANACHALGEBRA}), last inequality tells that $S\theta_g$ is invertible and its inverse 
  	is also Lipschitz operator and 
  	\begin{align*}
  		\|(S\theta_g)^{-1}\|_{\operatorname{Lip}_0}\leq 
  		\frac{1}{1-\|S\|_{\operatorname{Lip}_0}(\alpha \|\theta_f\|_{\operatorname{Lip}_0}+\gamma)}.
  	\end{align*}
  	Define $T\coloneqq (S\theta_g)^{-1} S$. Then $T\theta_g=I_\mathcal{X}$ and 
  	\begin{align*}
  		\|x-y\|&=\|T\theta_gx-T\theta_gy\|\leq \|T\|_{\operatorname{Lip}_0}\|\theta_gx-\theta_gy\|\\
  		&\leq \frac{1}{1-\|S\|_{\operatorname{Lip}_0}(\alpha \|\theta_f\|_{\operatorname{Lip}_0}+\gamma)}\|\theta_gx-\theta_gy\|, 
  		\quad \forall x, y \in  \mathcal{X}
  	\end{align*}
  	which gives the lower bound stated in theorem.
  \end{proof}

  \begin{corollary}
  	Let $(\{f_n\}_{n}, S)$ be a
  	metric Bessel sequence for a Banach space    $\mathcal{X}$ w.r.t. $\mathcal{M}_d$. Assume that
  	$f_n(0)=0,$ for all $n \in \mathbb{N}$, and $S0=0$. Let $\{g_n\}_{n}$ be a collection in 
  	$\operatorname{Lip}_0(\mathcal{X}, \mathbb{K})$ satisfying the following.
  	There exist $\alpha, \gamma\geq 0$ such that 
  	\begin{align*}
  		\|\{(f_n-g_n)(x)-(f_n-g_n)(y)\}_n\|\leq \alpha \|\{f_n(x)-f_n(y)\}_n\|+\gamma \|x-y\|, \quad \forall x, y \in  \mathcal{X}.
  	\end{align*}
  	Then there exists a reconstruction Lipschitz operator $T$ such that $(\{f_n\}_{n}, T)$ is  a
  	metric Bessel sequence for      $\mathcal{X}$ w.r.t. $\mathcal{M}_d$ with bound 
  	$\|\theta_f\|_{\operatorname{Lip}_0}+(\alpha\|\theta_f\|_{\operatorname{Lip}_0}+\gamma)$.
  \end{corollary}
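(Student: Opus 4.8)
The plan is to read this off as the Bessel-sequence specialization of Theorem \ref{STABILITYMA}, retaining only the portion of that argument that produces the \emph{upper} bound. The point I would stress at the outset is that in the proof of Theorem \ref{STABILITYMA} the upper-bound estimate uses neither the lower frame bound of $(\{f_n\}_n,S)$ nor the normalization hypothesis (ii) there (namely $\alpha\|\theta_f\|_{\operatorname{Lip}_0}+\gamma\le\|S\|_{\operatorname{Lip}_0}^{-1}$); it relies only on the pointed conditions $f_n(0)=0$, $g_n(0)=0$, together with the perturbation inequality. This is exactly why both of those hypotheses are absent from the corollary, and recognizing that the computation is genuinely independent of them is the whole content of the ``inspection of the proof.''

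Concretely, I would first set $\theta_g:\mathcal{X}\ni x\mapsto\{g_n(x)\}_n$ and verify that $\theta_g$ is well-defined, i.e. $\{g_n(x)\}_n\in\mathcal{M}_d$. Applying the hypothesized perturbation inequality with $y=0$ and using $f_n(0)=g_n(0)=0$ gives $\|\{g_n(x)\}_n\|\le(1+\alpha)\|\{f_n(x)\}_n\|+\gamma\|x\|$, whose right-hand side is finite because $(\{f_n\}_n,S)$ is a metric Bessel sequence. Next, for $x,y\in\mathcal{X}$ I would add and subtract $\{f_n(x)-f_n(y)\}_n$, apply the triangle inequality in $\mathcal{M}_d$ and the perturbation inequality, and obtain
\begin{align*}
\|\theta_g x-\theta_g y\|\le(1+\alpha)\|\theta_f x-\theta_f y\|+\gamma\|x-y\|\le\left((1+\alpha)\|\theta_f\|_{\operatorname{Lip}_0}+\gamma\right)\|x-y\|.
\end{align*}
Since $(1+\alpha)\|\theta_f\|_{\operatorname{Lip}_0}+\gamma=\|\theta_f\|_{\operatorname{Lip}_0}+(\alpha\|\theta_f\|_{\operatorname{Lip}_0}+\gamma)$, this is precisely the asserted Bessel bound and shows $\theta_g$ is Lipschitz. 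For the operator required by Definition \ref{METRICBANACHFRAME}, the hypothesis already supplies a Lipschitz map $S$ on $\mathcal{M}_d$ with $S0=0$, which I would take as the operator $T$.

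The one place deserving genuine care, and the step I expect to be the main obstacle, is the reconstruction operator. In Theorem \ref{STABILITYMA} the map $T=(S\theta_g)^{-1}S$ satisfying $T\theta_g=I_{\mathcal{X}}$ is produced by inverting $S\theta_g$, and that inversion is the \emph{only} use of condition (ii): the estimate $\|I_{\mathcal{X}}-S\theta_g\|_{\operatorname{Lip}_0}\le\|S\|_{\operatorname{Lip}_0}(\alpha\|\theta_f\|_{\operatorname{Lip}_0}+\gamma)<1$ invokes the unital Banach-algebra structure of $\operatorname{Lip}_0(\mathcal{X})$ (Theorem \ref{BANACHALGEBRA}). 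With (ii) dropped, this invertibility is no longer available, so the substantive conclusion here is the upper (Bessel) estimate derived above, and any exact-reconstruction requirement would have to be recovered, if at all, by separately assuming $S\theta_g$ is invertible. Apart from this, the remaining work is routine bookkeeping: confirming that each inequality is valid term-by-term and survives the passage $m\to\infty$ using only the weaker hypotheses, so that no convergence or norm estimate in $\mathcal{M}_d$ silently borrows the frame lower bound.
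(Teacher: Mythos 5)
Your proposal follows exactly the route the paper intends: the corollary carries no proof of its own and is meant to be read off from the proof of Theorem \ref{STABILITYMA} by retaining only the upper-bound computation, and your derivation of $\|\theta_g x-\theta_g y\|\leq\left((1+\alpha)\|\theta_f\|_{\operatorname{Lip}_0}+\gamma\right)\|x-y\|$, including the well-definedness check via $y=0$, reproduces the paper's calculation verbatim; your observation that this portion uses neither the lower bound nor hypothesis (ii) of Theorem \ref{STABILITYMA} is precisely the intended ``inspection.''

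The caveat you raise about the reconstruction operator is not excessive caution but a real defect, and you have located it correctly. Definition \ref{METRICBANACHFRAME} drops only the lower inequality for a metric Bessel pair, so condition (iii), $T(\{g_n(x)\}_n)=x$, is still demanded; and in the proof of Theorem \ref{STABILITYMA} hypothesis (ii) enters solely through the estimate $\|I_{\mathcal{X}}-S\theta_g\|_{\operatorname{Lip}_0}\leq\|S\|_{\operatorname{Lip}_0}(\alpha\|\theta_f\|_{\operatorname{Lip}_0}+\gamma)<1$ and Theorem \ref{BANACHALGEBRA}, exactly as you say. Once (ii) is dropped no such $T$ need exist: take $g_n\equiv 0$ for all $n$, which satisfies the perturbation inequality with $\alpha=1$, $\gamma=0$ (both admissible here); then $\theta_g=0$ is not injective and admits no Lipschitz left inverse when $\mathcal{X}$ has more than one point. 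So under the evidently intended reading of the conclusion — the pair should be $(\{g_n\}_n,T)$, since the asserted bound is the Bessel bound of $\{g_n\}_n$, the ``$\{f_n\}_n$'' being a typo carried over from the statement of Theorem \ref{STABILITYMA} — the reconstruction clause of the corollary is simply false, and only the Bessel estimate you proved survives. Under the literal reading with $\{f_n\}_n$, your choice $T=S$ does satisfy everything (since $S\theta_f=I_{\mathcal{X}}$ is hypothesized and $\|\theta_f\|_{\operatorname{Lip}_0}$ is below the asserted bound), but then the perturbation hypothesis plays no role. In short: your proof is correct, coincides with the paper's implicit argument where an argument exists, and your flagged obstruction is a genuine gap in the corollary as stated rather than in your proposal.
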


 \section{Acknowledgements}
  First author thanks National Institute of Technology (NITK) Surathkal for
 financial assistance.

 \bibliographystyle{plain}
 \bibliography{reference.bib}

\end{document}